\newif\ifarxiv
\title[Motivic classes of some classifying stacks]
{Motivic classes of some classifying stacks}
\author{Daniel Bergh}
  \subjclass[2000]{14A20 (primary), 20G99, 14L30 (secondary).}
\address{
Daniel Bergh
Department of Mathematics,
Stockholm University,
SE-106 91 Stockholm, Sweden}
\email{dbergh@gmail.com}
\date{}
\theoremstyle{plain}
\newtheorem{maintheorem}{Theorem}
\newtheorem{theorem}{Theorem}
\newtheorem{prop}[theorem]{Proposition}
\newtheorem{lemma}[theorem]{Lemma}
\newtheorem{corollary}[theorem]{Corollary}
\theoremstyle{definition}
\def\newnumbered{\newtheorem}
\theoremstyle{remark}
\def\newunnumbered{\newtheorem*}
\def\thmauth#1{#1}
\def\thmauth#1{(#1)}
\numberwithin{theorem}{section}
\numberwithin{equation}{section}
\numberwithin{definition}{section}
\DeclareMathOperator{\Spec}{Spec}
\DeclareMathOperator{\Lie}{Lie}
\DeclareMathOperator{\Pic}{Pic}
\DeclareMathOperator{\K0}{K_0}
\DeclareMathOperator{\Hom}{Hom}
\DeclareMathOperator{\Isom}{Isom}
\DeclareMathOperator{\GL}{GL}
\DeclareMathOperator{\PGL}{PGL}
\DeclareMathOperator{\Mon}{N}
\DeclareMathOperator{\PMon}{PN}
\DeclareMathOperator{\SL}{SL}
\DeclareMathOperator{\Sp}{Sp}
\DeclareMathOperator{\diag}{diag}
\DeclareMathOperator{\id}{id}
\DeclareMathOperator{\HH}{H}
\DeclareMathOperator{\Ext}{Ext}
\DeclareMathOperator{\Coh}{Coh}
\DeclareMathOperator{\Symm}{Symm}
\DeclareMathOperator{\Conf}{Conf}
\newcommand{\Res}[0]{\ensuremath{R}}
\newcommand{\sPic}[0]{\ensuremath{\underline{\Pic}}}
\newcommand{\sHom}[0]{\ensuremath{\underline{\Hom}}}
\newcommand{\sIsom}[0]{\ensuremath{\underline{\Isom}}}
\newcommand{\stExt}[1]{\ensuremath{\stack{E}\mathrm{xt}^#1}}
\newcommand{\cat}[1]{\ensuremath{\mathrm{#1}}}
\newcommand{\stack}[1]{\ensuremath{\mathcal{#1}}}
\newcommand{\sheaf}[1]{\ensuremath{\mathscr{#1}}}
\newcommand{\contr}[1]{\ensuremath{\times^{#1}}}
\newcommand{\catset}[1]{\cat{Set}}
\newcommand{\cd}[1]{\ensuremath{{#1^\vee}}}
\newcommand{\term}[1]{{\em #1}}
\newcommand{\ec}[0]{\ensuremath{\chi_\mathrm{c}}}
\newcommand{\pMC}[2]{\stack{M}_{#1, (#2)}}
\newcommand{\MC}[2]{\stack{M}_{#1, #2}}
\newcommand{\UC}[0]{\stack{E}}
\newcommand{\Hns}[0]{\ensuremath{H_\mathrm{ns}}}
\newcommand{\Hsing}[0]{\ensuremath{H_\mathrm{sing}}}
\newcommand{\ZZ}[0]{\ensuremath{\mathbb{Z}}}
\newcommand{\QQ}[0]{\ensuremath{\mathbb{Q}}}
\newcommand{\CC}[0]{\ensuremath{\mathbb{C}}}
\newcommand{\GF}[1]{\ensuremath{\mathbb{F}_{#1}}}
\newcommand{\LL}[0]{\ensuremath{\mathbb{L}}}
\newcommand{\PP}[0]{\ensuremath{\mathbb{P}}}
\newcommand{\GG}[0]{\ensuremath{\mathbb{G}}}
\newcommand{\GGm}[0]{\ensuremath{\mathbb{G}_\mathrm{m}}}
\newcommand{\GGa}[0]{\ensuremath{\mathbb{G}_\mathrm{a}}}
\renewcommand{\SS}[0]{\ensuremath{\mathrm{S}}}
\renewcommand{\AA}[0]{\ensuremath{\mathbb{A}}}
\newcommand{\BB}[0]{\ensuremath{\mathrm{B}}}
\newcommand{\KK}[0]{\ensuremath{\mathrm{K}}}
\newcommand{\Bu}[1]{\ensuremath{\mathrm{A}(#1)}}
\newcommand{\SCF}[1]{\ensuremath{\mathrm{SCF}(#1)}}
\newcommand{\lcat}[1]{\ensuremath{\mathrm{#1}}}
\newcommand{\cAS}[0]{\ensuremath{\lcat{Space}}}
\newcommand{\cVar}[0]{\ensuremath{\lcat{Var}}}
\newcommand{\cSch}[0]{\ensuremath{\lcat{Sch}}}
\newcommand{\cStack}[0]{\ensuremath{\lcat{Stack}}}
\newcommand{\axiom}[1]{\textbf{#1}}
\newcommand{\gsi}[0]{\axiom{GS1}}
\newcommand{\gsii}[0]{\axiom{GS2}}
\newcommand{\todo}[1]{
}
\def\hcolon{\mathchoice{\!:\!}{\!:\!}{:}{:}}
\begin{document}
\maketitle
\begin{abstract}
We prove that the class of the classifying stack $\BB \PGL_n$ is the
multiplicative inverse of the class of the projective linear group $\PGL_n$
in the Grothendieck ring of stacks $\KK_0(\cStack_k)$ for $n = 2$ and $n = 3$
under mild conditions on the base field $k$.
In particular, although it is known that the multiplicativity relation
$\{T\} = \{S\}\cdot\{\PGL_n\}$ does not hold for all $\PGL_n$-torsors $T \to S$,
it holds for the universal $\PGL_n$-torsors for said~$n$.
\end{abstract}
\section*{Introduction}
Recall that the Grothendieck ring of varieties over a field $k$ is defined as
the free group on isomorphism classes $\{X\}$ of varieties $X$ subject to the
\term{scissors relations} $\{X\} = \{X\setminus Z\} + \{Z\}$ for closed
subvarieties $Z \subset X$. We denote this ring by $\K0(\cVar_k)$. Its
multiplicative structure is induced by $\{X\}\cdot\{Y\} = \{X\times Y\}$ for
varieties $X$ and~$Y$.

A fundamental question regarding $\K0(\cVar_k)$ is for which groups $G$ the
\term{multiplicativity relation} $\{T\} = \{G\}\cdot\{S\}$ holds for all
$G$-torsors $T\to S$. If $G$ is \term{special}, in the sense of Serre and
Grothendieck, this relation always holds. However, it was shown by Ekedahl that
if $G$ is a non-special connected linear group, then there exists a $G$-torsor
$T \to S$ for which $\{T\} \neq \{G\}\cdot\{S\}$ in the ring $\K0(\cVar_\CC)$
and its completion $\widehat{\KK}_0(\cVar_\CC)$~\cite[Theorem~2.2]{ekedahl2009ap}.

One can also construct a Grothendieck ring $\K0(\cStack_k)$ for the larger
2-category of algebraic stacks. We will recall its precise definition in
Section~\ref{gro}. This ring was studied by Ekedahl in a series of
preprint~\cite{ekedahl2009ap,ekedahl2009gg,ekedahl2009fg}.
Similar constructions have also been studied independently by other
authors~\cite{behrend2007,joyce2007,toen2005}.

The ring $\K0(\cStack_k)$ is the localisation of the ring $\K0(\cVar_k)$ in
the class $\LL$ of the affine line and in all cyclotomic
polynomials in $\LL$. In particular, the canonical map from the
Grothendieck of varieties to its completion factors through $\K0(\cStack_k)$, so
the results regarding the multiplicativity relations for torsors stated above
hold also in $\K0(\cStack_k)$. When considering stacks, the \term{universal}
torsor $\Spec k \to \BB G$ for a given group $G$ is of particular interest.
Here $\BB G$ denotes the classifying stack for the group $G$. This torsor
is universal in the sense that every other torsor may be obtained as a base
change of it. Since the class of the base $\Spec k$ is the multiplicative
identity in $\K0(\cStack_k)$, the multiplicativity relation for the
universal torsor implies that the class of $\BB G$ is the inverse of the
class of the group $G$ itself. In this article, we study the multiplicativity
relation for the universal $\PGL_n$-torsors for some small $n$. We state the main
result.
\label{int-result}
\begin{maintheorem}
\label{mainBPGL}
Let $n = 2$ or $n = 3$, and let $k$ be any field in which $n!$ is invertible,
and which contains all $n$-th roots of unity. Then we have
$\{\BB \PGL_n\} = \{\PGL_n\}^{-1}$ in $\KK_0(\cStack_k)$.
\end{maintheorem}
\noindent
Note that the groups $\PGL_n$ are non-special for $n > 1$. In particular, it
follows that the multiplicativity relation for a universal $G$-torsor does
{\em not} imply the multiplicativity relation for arbitrary $G$-torsors. This
gives a negative answer to the question raised in \cite[Remark~3.3]{behrend2007}.
Other counter-examples to this is given in the recent paper \cite{dhillon2014},
where the motivic classes of the classifying stack for orthogonal linear groups are
computed.

The techniques for computing the classes involves finding suitable linear
representations for the groups and then stratifying the spaces corresponding to
these representations into orbits in a similar way as demonstrated in
\cite[Section~4]{ekedahl2009gg}. In the case $n = 3$, we use the natural action
of $\PGL_3$ on the space of plane curves of degree~3. The computations make
use of the simple classification of such curves. It should be emphasised
that this method does not scale well with $n$. Presumably, already the
case $n = 4$ would be well out of reach.

In the process of computing the class of the classifying stacks $\BB \PGL_n$,
we will also compute the corresponding classes for several other groups. Most
notably, the subgroup $\PMon_n$ of monomial matrices in $\PGL_n$ shows up, both in the
case $n = 2$ and $n = 3$. Since this group is not connected, one should not
expect the class of $\BB \PMon_n$ to be the inverse of the class of $\PMon_n$. Instead
the expected class is $\LL^{n(n-1)}/\{\PGL_n\}$. We prove that for small $n$ this
is the actual class.
\begin{maintheorem}
\label{mainBPN}
Let $n = 2$ or $n = 3$, and let $\PMon_n$ denote the subgroup of monomial matrices
in $\PGL_n$. Then we have the relation
$\{\BB \PMon_n\} = \LL^{n(n-1)}/\{\PGL_n\}$
in $\KK_0(\cStack_k)$ for any field~$k$.
\end{maintheorem}
\noindent
By {\em expected} in this context, we mean that given that $\BB \PMon_n$ is a
rational function in $\LL$, it has to be this class. To prove that the
class actually is rational in $\LL$, we use the fact that $\BB \PMon_n$ can be
identified with the classifying stack for a torus relative to the base $\BB
\Sigma_n$. Here $\Sigma_n$ denotes the symmetric group of $n$ symbols. For small
$n$ the dual of this torus is \term{stably rational} in a relative sense
which we make precise in Section~\ref{tori}. But this is exceptional for small
$n$ and is known to be false for large $n$. Hence the method of proof does not
generalise. In fact, the failure of the torus of being stably rational might
even be taken as an indication that the class of $\BB \PMon_n$ might not be the
expected for large $n$.

\subsection*{Outline.}
In Section~\ref{prel}, some preliminaries are given in
order to give references and establish notation and conventions. Section~\ref{gro}
summaries some basic results from \cite{ekedahl2009gg} on the Grothendieck ring
of stacks. In Section~\ref{tori}, we study certain universal tori and compute
their classes as well as the classes of their classifying stacks in
$\K0(\cStack_k)$. This generalises and applies results from a non-stacky context
by Rökaeus \cite{rokeaus2007}. In the following section, we use this to get a
proof of Theorem~\ref{mainBPN}.

The rest of the sections are dedicated to the proof of Theorem~\ref{mainBPGL}.
We relate the problem to computing the class of the moduli space of plane degree 3 curves
up to projective equivalence in Section~\ref{com}, as well as computing
the class of the subspace of singular curves. Here we use some information
regarding stabilisers of plane degree~3 curves, which are summarised in
Appendix~\ref{sin}.

Finally, we compute the class of the open locus of smooth curves.
This is done by describing the locus as the classifying stack of the
3-torsion subgroup of the universal curve over the moduli stack of elliptic
curves. This moduli description is established in Section~\ref{mod}, and the
actual computation of the class is done in Section~\ref{com-smo}.

\subsection*{Acknowledgements}
The problem studied in this article was suggested to me by Torsten
Ekedahl (1955--2011). I am grateful for the privilege of having
been his student. Furthermore, I would like to thank my advisor, David Rydh, for
his support and for his many suggestions on improvements to this article.
Finally, I would like to thank Jochen Heinloth for his valuable comments.

\section{Preliminaries}
\label{prel}
By default, our schemes, algebraic spaces and algebraic stacks will be of
finite presentation over a field.
We will also assume that our stacks have affine, but not necessarily finite,
stabilisers.
In particular, this ensures that our algebraic spaces have open dense
schematic loci~\cite[Proposition~II.6.7]{knutson1973},
and that they admit finite stratifications into locally
closed subspaces which are schemes.
Furthermore, the conditions on algebraic stacks imply that they
admit finite stratifications into locally closed substacks which are global quotients
of schemes by general linear groups~\cite[Proposition~3.5.9]{kresch1999}.

We will often use the \term{functor of points} perspective, and will frequently
identify schemes and varieties with their functor of points. When talking
about sheaves, we will, by default, work over the site of schemes over a given
base, and we will use the fppf topology. The symbols $\emptyset$ and $\ast$ are
used for the initial and terminal objects in the associated sheaf category. We
will usually refer to sheaves of groups simply as groups.

Torsors will play an important role in this work. The canonical reference for
torsors is Giraud's book on non-abelian cohomology \cite{giraud1971}. A shorter
and more basic introduction is given in Milne's book on étale cohomology
\cite[Section III.4]{milne1980}. Recall that a \term{pseudo-torsor} for a group
$G$ is a sheaf of $G$-sets $T$ on
which $G$ acts freely and transitively. If $G$ is non-commutative, we will
usually assume that $G$ acts on $T$ from the right. By free and transitive, we
mean that the map $T \times G \to T \times T$ given by
$(t, g) \mapsto (t, t\cdot g)$ is an isomorphism. If in addition the
map $T \to \ast$ is a local epimorphism, we call $T$ a \term{torsor}. The
torsors for a group $G$ are classified by the classifying stack $\BB G$. When we
want to be explicit about the base $\stack{S}$, which in general will be a stack,
we use the notation $\BB_{\stack{S}} G$.

Given a $G$-torsor $T$ and a left $G$-space $F$, we may form the
\term{contraction product} $T \contr{G} F$. This is defined as the
quotient of the product $T \times F$ by the equivalence
relation $(t\cdot g, x) \sim (t, g\cdot x)$ for any $g \in G$. The contraction
product is locally isomorphic to $F$, and we call it the $F$-\term{fibration}
associated to the torsor $T$. The torsor may be recovered by taking
$\sIsom_G(T \contr{G} F, F)$, giving an equivalence of categories between
$F$-fibrations with structure group $G$ and $G$-torsors.

We recall some terminology and basic facts about groups of multiplicative
type, the standard reference being \cite[Exposé VIII--X]{SGA3III}. A more
elementary treatment in the case where the base is affine is given
in \cite{waterhouse1979}.

Given a group $G$, we may consider its \term{Cartier dual}, which 
we denote by $\cd{G}$. This is defined as the sheaf $\sHom_{gr}(G, \GGm)$.
By a \term{diagonalisable} group, we mean a finite product of groups
$\mu_n$ of $n$-th roots of unity and multiplicative groups $\GGm$.
These are exactly the Cartier duals of finitely generated abelian groups.
A group which is diagonalisable after a finite étale base change is called a
group of \term{multiplicative type}. If no base change is needed, the group is
said to be \term{split}. Note that these definitions are less general than those
given in \cite{SGA3III}. In particular, all our groups of multiplicative type are assumed
to be isotrivial and of finite type.

A group $T$ over a base $S$ which locally is the Cartier dual of
a lattice $L$ is called a \term{torus}. The torus $T$ is determined by
the lattice $L$ together with a continuous action of the étale fundamental group
of $S$. The torus corresponding to the dual lattice $\Hom_\ZZ(L, \ZZ)$ is called
the dual torus, and we denote it by $T^\circ$. A torus is called \term{quasi-split}
if it corresponds to a permutation representation of the étale fundamental group of
the base. Such a group can also be described as the Weil restriction $\Res_{S'/S}\GGm$
of $\GGm$ along a finite étale morphism $S'\to S$.

Finally, we recall some facts about \term{special} groups. These were studied
by Serre and Grothendieck \cite{chevalley1958}, and they made a complete
classification in the case of reduced groups over an algebraically closed field.
We make a definition, which also works when we are working with group
objects over stacks.

\begin{definition}
Let $\stack{S}$ be an algebraic stack, and let $G$ be an algebraic group over
$\stack{S}$. We say that $G$ is \term{special} provided that every $G$-torsor
over any field $K$ over $\stack{S}$ is trivial. 
\end{definition}

\noindent
Examples of special groups are $\GL_n$, $\SL_n$, $\Sp_{2n}$, $\GGa$ and $\GGm$ and
extensions thereof, whereas $\PGL_n$ is not special. In particular, split tori
are special since they are products of $\GG_m$. Non-split tori need not be special in
general, but quasi-split tori are, also when considered over a general base.

\begin{prop}
A quasi-split torus $T$ over any base stack $\stack{S}$ is special.
\end{prop}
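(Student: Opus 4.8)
The plan is to reduce the statement, via the description of quasi-split tori as Weil restrictions, to the vanishing of the Picard group of a finite product of fields. First I would recall that a quasi-split torus corresponds to a permutation representation of the étale fundamental group of the base, and that every permutation representation is a direct sum of transitive ones; correspondingly $T$ is a finite product of Weil restrictions $\Res_{S_i'/\stack{S}}\GGm$ along finite étale morphisms $S_i' \to \stack{S}$, and such a finite product is itself of the form $\Res_{S'/\stack{S}}\GGm$ with $S' = \coprod_i S_i'$. So without loss of generality $T = \Res_{S'/\stack{S}}\GGm$ for a single finite étale morphism $\pi\colon S' \to \stack{S}$. Since speciality is only tested against fields, I would then fix a field $K$ over $\stack{S}$, i.e.\ a morphism $\Spec K \to \stack{S}$, and base change: because $\pi$ is finite étale, $S' \times_{\stack{S}} \Spec K = \Spec K'$ with $K'$ a finite étale $K$-algebra, hence a finite product of finite separable field extensions of $K$, and because Weil restriction commutes with base change the pullback of $T$ is $\Res_{K'/K}\GGm$.

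The core of the argument is then a Shapiro-type statement: for the finite étale morphism $q\colon \Spec K' \to \Spec K$ the direct image $q_*$ on the fppf site is exact and sends $\GGm$ to $\Res_{K'/K}\GGm$ (equivalently, restriction of scalars along $q$ is right adjoint to base change along $q$). I would use this to produce an equivalence between the groupoid of $\GGm$-torsors on $\Spec K'$ and the groupoid of $\Res_{K'/K}\GGm$-torsors on $\Spec K$. Granting this, a $\Res_{K'/K}\GGm$-torsor over $K$ corresponds to a $\GGm$-torsor on $\Spec K'$, that is, to a line bundle on $\Spec K'$; since $K'$ is a finite product of fields, $\Pic(\Spec K') = 0$, so every such torsor is trivial, and $T$ is special.

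The main obstacle I anticipate is the bookkeeping around this adjunction: one must check not merely that there is a bijection between $\Res_{K'/K}\GGm$-torsors over $K$ and line bundles on $\Spec K'$ at the level of cohomology sets, but that it is induced by a genuine equivalence of torsor groupoids, so that triviality of \emph{every} torsor follows, and that the identification $q_*\GGm = \Res_{K'/K}\GGm$ together with exactness of $q_*$ is invoked correctly. Over a field all of this is completely standard — finite morphisms are acyclic for étale cohomology, and $q_*$ is exact because $q$ is finite and flat — but it is the one point worth isolating; the remaining ingredients (the structure of quasi-split tori, base change for Weil restriction, triviality of line bundles on a product of fields) are formal. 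As a variant, one could instead phrase the whole argument as: $\GGm$ is special, and the Weil restriction along a finite étale morphism of a special group is again special, by exactly the same adjunction applied over arbitrary fields rather than only those over $\stack{S}$.
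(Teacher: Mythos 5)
Your proof is correct and takes essentially the same route as the paper: both reduce to the case of a field base, realise the quasi-split torus as the unit group of a finite étale algebra (equivalently $\Res_{K'/K}\GGm$), identify its torsors with invertible modules over that algebra, and conclude from the triviality of line bundles on a finite product of fields. The only cosmetic difference is that the paper establishes the torsor--line bundle correspondence by fppf descent for sheaves of $\sheaf{A}$-modules rather than via the Shapiro-type adjunction $q_*\GGm = \Res_{K'/K}\GGm$; note also that for speciality a bijection on isomorphism classes of torsors already suffices, so the groupoid-level bookkeeping you single out is not actually needed.
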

\begin{proof}
We may assume that $\stack{S} = S$ is the spectrum of a field.
First note that $T$ is isomorphic to the group of units of a
locally free sheaf of $\sheaf{O}_S$-algebras $\sheaf{A}$ of finite rank.
Indeed, let $S' \to S$ be a Galois extension splitting $T$ with corresponding Galois group $\Gamma$.
Then, since $T$ is quasi-trivial, it corresponds to the $\Gamma$-equivariant
sheaf of groups $\sheaf{O}^\times_{S'}\times \cdots \times \sheaf{O}^\times_{S'}$
where $\Gamma$ acts by permuting the factors. This is on the other hand the
group of units in the $\Gamma$-equivariant sheaf of $\sheaf{O}_{S'}$-algebras
$\sheaf{O}_{S'}\times \cdots \times \sheaf{O}_{S'}$ with the corresponding
permutation action. This sheaf of algebras descends to our desired $\sheaf{A}$
on $S$.

By flat descent for quasi-coherent sheaves, the fibred category of sheaves
of $\sheaf{A}$-modules is a stack for the fppf topology. The
$\sheaf{A}^\times$-torsors classify the rank 1 locally free sheaves for $\sheaf{A}$.
Such sheaves trivialise Zariski-locally on $\Spec\sheaf{A}$, and since $\Spec\sheaf{A}$ is
finite over $S$, the sheaves trivialise Zariski locally also over $S$. It
follows that the $\sheaf{A}^\times$-torsors trivialise over $S$, which
concludes the proof.
\end{proof}
\section{The Grothendieck group of stacks}
\label{gro}
In this section, we define the Grothendieck ring of stacks and summarise some
of its basic properties. Most of the results in this section are already
described in \cite{ekedahl2009gg} or are simple consequences of results
described there.

Since it is sometimes convenient to work with the relative Grothendieck ring, we
state the results in this generality. Let $\stack{S}$ be an algebraic stack.
We let $\cStack_{\stack{S}}$ denote the 2-category of finitely presented
algebraic stacks over $\stack{S}$. The subcategory of stacks which are
representable by algebraic spaces over $\stack{S}$ is denoted by
$\cAS_{\stack{S}}$. Although many of the definitions and results would make
sense in a more general setting, we will, for simplicity, assume that the base
$\stack{S}$ is of finite presentation over a field.

\begin{definition}
Let $\stack{S}$ be an algebraic stack. The \term{Grothendieck ring of
algebraic stacks} over $\stack{S}$ is denoted by $\KK_0(\cStack_{\stack{S}})$.
As a group it is presented by generators $\{\stack{X}\}$, being
equivalence classes of objects $\stack{X}$ in $\cStack_k$, subject to the
relations
\begin{enumerate}
\item[\gsi] $\{\stack{X}\} = \{\stack{Z}\} + \{\stack{X} \setminus \stack{Z}\}$
if $\stack{Z}$ is a closed substack of $\stack{X}$,
\item[\gsii] $\{\stack{E}\} = \{\mathbb{A}^n \times \stack{X}\}$ if $\stack{E}
\to \stack{X}$ is a vector bundle of constant rank $n$.
\end{enumerate}
This group is endowed with a natural ring structure, with multiplication defined
by $\{\stack{X}\}\cdot\{\stack{Y}\} := \{\stack{X} \times_\stack{S}
\stack{Y}\}$.
The multiplicative identity is given by the class $1 := \{\ast\}$ of the
base $\stack{S}$, and the additive identity is given by the class $0 :=
\{\emptyset\}$ of the empty space. The class $\{\AA^1_\stack{S}\}$ of the affine
line relative to $\stack{S}$ is called the \term{Lefschetz class} and is denoted
by $\LL$. The axiom GS1 is sometimes called the \term{scissors relation}. The
\term{Grothendieck ring of algebraic spaces}, denoted by
$\KK_0(\cAS_{\stack{S}})$, is defined similarly, but with the category
$\cAS_{\stack{S}}$ taken as the underlying category.
\end{definition}
\noindent
Note that if we work over a a field $k$, we have natural inclusions
$\cVar_k \to \cSch_k \to \cAS_k$ of categories, where $\cVar_k$ denotes
the category of varieties over $k$ and $\cSch_k$ denotes the category of
finite type schemes over $k$. These inclusions induce ring homomorphisms
$$
\KK_0(\cVar_k) \to \KK_0(\cSch_k) \to \KK_0(\cAS_k),
$$
on the corresponding Grothendieck rings, which are all isomorphisms. Also,
in this case the relation GS2 is redundant, as we will see from
Proposition~\ref{gro-as-mul} whose proof does not use this relation. In
particular, the ring $\KK_0(\cAS_k)$ is identical to the usual Grothendieck
ring of varieties.

Next, we describe how the relative Grothendieck rings over different bases are
related to each other. Let $f\colon\stack{S}' \to \stack{S}$ be a
morphism of algebraic stacks.
We get a natural map $f^\ast\colon \K0(\cStack_{\stack{S}}) \to
\K0(\cStack_{\stack{S'}})$ induced by pulling back stacks along $f$.
This is
easily seen to be a ring homomorphism taking $\LL$ in $\K0(\cStack_{\stack{S}})$
to $\LL$ in $\K0(\cStack_{\stack{S'}})$.
Since, by our assumptions, the morphism $f$ is of finite presentation,
we also have a map $f_\ast \colon \K0(\cStack_{\stack{S}'}) \to
\K0(\cStack_{\stack{S}})$ in the other direction.
This is induced by post-composition with $f$.
This is a homomorphism of the underlying groups, and we have a
\term{projection formula}
$$
f_\ast\big(f^\ast(a)\cdot b\big) = a\cdot f_\ast(b),
\qquad a \in \K0(\cStack_{\stack{S}}),
\ b \in \K0(\cStack_{\stack{S}'}),
$$
making $f_\ast$ a $\K0(\cStack_{\stack{S}})$-module homomorphism.
Of course, similar formulae hold if we instead work with
$\K0(\cAS_{\stack{S'}})$ and $\K0(\cAS_{\stack{S}})$, but for $f_\ast$ to be
defined, the morphism $f$ has to be representable by algebraic spaces.
\subsection{Classes of fibrations}
We need several results regarding relations associated to fibrations in the
Grothendieck rings. For convenience, we also include most of the proofs.
\begin{prop}
\label{gro-as-mul}
Let $E \to S$ be a fibration of algebraic spaces with fibre $F$. Assume that
the fibration is associated to a torsor for a special group. Then $\{E\} =
\{F\}$ in $K_0(\cAS_S)$.
\end{prop}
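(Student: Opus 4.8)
The plan is to reduce to a completely local statement and then exploit the defining relation \gsii{} together with the characterization of special groups. First I would recall that the fibration $E \to S$ arises, as explained in the preliminaries, as a contraction product $E \cong T \contr{G} F$ for a $G$-torsor $T \to S$ with $G$ special. Since $G$ is special, every $G$-torsor over a field over $S$ is trivial; in particular, stratifying $S$ by locally closed subspaces over which $T$ trivializes and using the scissors relation \gsi{} and the fact that $\{-\}$ is additive, I would reduce to the case where $T \to S$ is itself trivial, i.e.\ $T \cong S \times G$. But then $E \cong S \times F$ and the claim $\{E\} = \{F\}\cdot\{S\}$ (note: the correct statement in the relative ring $\KK_0(\cAS_S)$ is $\{E\} = \{F\}$, the factor $\{S\}$ being the identity) is immediate from the definition of the product. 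Some care is needed because "special" as defined only tests torsors over fields, so the trivialization is only generic; one iterates on the complement, and Noetherian induction terminates.

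The subtlety is that the above naive stratification argument needs the trivialization of $T$ to spread out from a field point to a locally closed neighborhood. For this I would use that $T \to S$ is of finite presentation, so a trivialization over the generic point of an irreducible component of $S$ extends to a trivialization over a dense open subspace (a section of $T$ over $\Spec \kappa(\eta)$ extends to a section over some open $U \subseteq S$ by a standard limit/spreading-out argument). Then $E|_U \cong U \times F$, so $\{E|_U\} = \{F\} \cdot \{U\}$ in $\KK_0(\cAS_U)$, and pushing forward along $U \hookrightarrow S$ and using \gsi{} on $S = U \sqcup (S \setminus U)$ reduces the computation of $\{E\}$ to that of $\{E|_{S \setminus U}\}$, where $S \setminus U$ has strictly smaller dimension. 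Noetherian induction on $\dim S$ closes the loop, the base case $S = \emptyset$ being trivial.

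An alternative, cleaner route—which I would actually prefer to write up—is to avoid stratification entirely by reducing to the case $F = \AA^n$. Indeed, if $F$ is itself an affine space, then $E \to S$ is a Zariski-locally trivial $\AA^n$-bundle (special groups act through $\GL_n$, and the associated bundle is Zariski-locally trivial since $\GL_n$ is special), hence a vector bundle of rank $n$ up to the choice of zero section—more carefully, $E \to S$ is a torsor under a vector bundle, hence an affine bundle, and one checks directly that \gsii{} forces $\{E\} = \LL^n = \{\AA^n\}$. The general case does not reduce to this, so in fact the stratification argument is the honest one; I will present that. The main obstacle is purely the spreading-out step: making precise that triviality of a finitely presented torsor at the generic point propagates to an open neighborhood, which is where finite presentation of the stacks (assumed throughout the paper) is essential.
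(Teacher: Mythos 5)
Your proof is correct and follows essentially the same route as the paper: trivialize the torsor at a generic point using specialness, spread out to a finitely presented open set $U$, apply the scissors relation to $S = U \sqcup (S\setminus U)$, and conclude by noetherian induction. The only nitpick is that when $S$ is reducible the complement $S\setminus U$ need not have strictly smaller dimension, so the induction should be phrased as noetherian induction on closed subspaces (as you also indicate) rather than on $\dim S$.
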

\begin{proof}
We may without loss of generality assume that $E$ and $S$ are reduced.
By our standard assumptions on algebraic spaces,
the schematic locus of $S$ is non-trivial.
Since the fibration is associated to a special group $G$,
there is a generic point in the schematic locus of $S$ over which the
fibration is trivial.
Hence there is an open set $U$ over which
$E_U \to U$ is isomorphic to $U \times F \to U$.
It follows that $\{E_U\} = \{F\}\{U\}$ in $K_0(\cAS_\stack{S})$.
If we let $Z$ be the reduced closed subscheme of $S$ with complement $U$, then
also $E_Z \to Z$ is associated to a $G$-torsor.
Under the hypothesis that $\{E_Z\} = \{F\}\{Z\}$, we may therefore conclude
that indeed $\{E\} = \{F\}\{S\}$ since $\{E\} = \{E_U\}+ \{E_Z\}$ and
$\{S\} = \{U\}+ \{Z\}$ by the scissors relation.
Hence the result follows by noetherian induction on $S$,
the statement for $S = \emptyset$ being trivial.
\end{proof}
\noindent
Note that if $S$ in the proposition above is of finite presentation over
$S_0$ and $F$ defined over $S_0$, then the
projection formula gives the relation $\{E\} = \{F\}\{S\}$ in $\K0(\cAS_{S_0})$.
Note also that Axiom GS2 is not used in the proof. Since GS2 is actually a
consequence of the proposition, we see that the axiom is redundant in this
situation.

\begin{prop}
\label{gro-bgln}
Let $n$ be a natural number and $\stack{T} \to \stack{S}$ be a $\GL_n$-torsor of
algebraic stacks. Then we have the relation $\{\stack{T}\} = \{\GL_n\}\{\stack{S}\}$
with $\{\GL_n\} = \prod_{i = 0}^{n-1}(\LL^n - \LL^i)$ in $\KK_0(\cStack_k)$.
In particular, since $\ast \to \BB\GL_n$ is a $\GL_n$-torsor, we have that
$1 = \{\GL_n\}\{\BB \GL_n\}$, so $\{\GL_n\}$ is invertible in
$\KK_0(\cStack_k)$.
\end{prop}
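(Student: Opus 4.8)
\emph{Proof strategy (plan).}
The plan is to realise $\stack{T}$ as an open substack of an iterated fibre power of an associated vector bundle and then to strip off the copies one at a time, using only the scissors relation \gsi{} and the vector bundle relation \gsii{}.

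First I would pass from the torsor to a vector bundle. By the torsor/fibration dictionary recalled in Section~\ref{prel}, the contraction product $\stack{V} := \stack{T}\contr{\GL_n}\AA^n$ with the standard representation of $\GL_n$ is a rank-$n$ vector bundle over $\stack{S}$, and $\stack{T}$ is canonically its frame bundle. Concretely, writing $\stack{V}^{(i)}$ for the $i$-fold fibre power of $\stack{V}\to\stack{S}$ (with $\stack{V}^{(0)} := \stack{S}$), we have $\stack{V}^{(n)} = \sHom_{\stack{S}}(\sheaf{O}_{\stack{S}}^n,\stack{V})$, and $\stack{T}$ is the open substack of those homomorphisms which are fibrewise isomorphisms, i.e.\ of $n$-tuples of sections of $\stack{V}$ forming a basis in every fibre.

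Next I would set up the stratification. For $0\le i\le n$ let $\stack{U}_i\subseteq\stack{V}^{(i)}$ be the open substack of $i$-tuples of sections that are fibrewise linearly independent; thus $\stack{U}_0 = \stack{S}$, $\stack{U}_n = \stack{T}$, and forgetting the last section defines a morphism $\stack{U}_{i+1}\to\stack{U}_i$. Let $\stack{V}_i$ be the pullback of $\stack{V}$ along $\stack{U}_i\to\stack{S}$; its total space is the open substack $\stack{U}_i\times_{\stack{S}}\stack{V}$ of $\stack{V}^{(i+1)}$. The $i$ universal independent sections span a rank-$i$ subbundle $\stack{W}_i\subseteq\stack{V}_i$, which is a closed substack of the total space of $\stack{V}_i$, and by construction an extension $(v_1,\dots,v_i,v_{i+1})$ of an independent tuple over $\stack{U}_i$ is itself independent exactly when $v_{i+1}$ avoids the span of $v_1,\dots,v_i$; hence $\stack{U}_{i+1}$ is precisely the open complement $\stack{V}_i\setminus\stack{W}_i$. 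Applying \gsii{} to the rank-$n$ bundle $\stack{V}_i$ and to the rank-$i$ bundle $\stack{W}_i$ over $\stack{U}_i$, and then \gsi{}, gives
$$
\{\stack{U}_{i+1}\}\;=\;\{\stack{V}_i\}-\{\stack{W}_i\}\;=\;(\LL^n-\LL^i)\{\stack{U}_i\}.
$$
Iterating from $\stack{U}_0 = \stack{S}$ yields $\{\stack{T}\} = \{\stack{U}_n\} = \big(\prod_{i=0}^{n-1}(\LL^n-\LL^i)\big)\{\stack{S}\}$. Specialising to the trivial torsor $\stack{T}=\GL_n$ over $\stack{S}=\ast$ identifies $\{\GL_n\}$ with $\prod_{i=0}^{n-1}(\LL^n-\LL^i)$, whereupon the displayed recursion gives the stated relation for an arbitrary torsor; the last assertion is the case $\stack{T}=\ast$, $\stack{S}=\BB\GL_n$, since then $1 = \{\ast\} = \{\GL_n\}\{\BB\GL_n\}$.

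I do not expect a genuine obstacle. The points needing (routine) care are that the frame locus and the fibrewise-independence loci $\stack{U}_i$ are open, being cut out by the nonvanishing of the maximal minors of the universal matrix of sections; that $\stack{W}_i$ is honestly a subbundle, hence closed in the total space of $\stack{V}_i$, so that \gsi{} applies; and that every stack occurring is again of finite presentation over $k$ and so has a class in $\KK_0(\cStack_k)$. As in Proposition~\ref{gro-as-mul}, only \gsi{}, \gsii{}, and the torsor/fibration material of Section~\ref{prel} are used; note, however, that here \gsii{} genuinely enters, since the noetherian-induction argument available for algebraic spaces is not directly available over a stacky base.
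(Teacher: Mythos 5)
Your proposal is correct and is essentially the paper's proof: the same stratification of the fibre powers of the associated vector bundle by loci of fibrewise independent tuples, the same identification of the "span" as a rank-$i$ subbundle to get $\{\stack{U}_{i+1}\}=(\LL^n-\LL^i)\{\stack{U}_i\}$ via \gsi{} and \gsii{}, and the same specialisations to the trivial torsor and to $\ast\to\BB\GL_n$. The only cosmetic difference is that the paper makes the openness of the independence locus and the closedness of the span precise via the exterior-power maps $\stack{E}^i\to\bigwedge^i\stack{E}$ (rather than maximal minors), which is just one way of carrying out the routine care you flag.
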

\begin{proof}
Let $\stack{E} \to \stack{S}$ be the vector bundle associated to the $\GL_n$-torsor
$\stack{T} \to \stack{S}$. For $1 \leq i \leq n$, consider the map $\stack{E}^i
\to \bigwedge^i \stack{E}$ from the $i$-th fibre power to the $i$-th exterior
power over $\stack{S}$ taking an $i$-tuple of sections to their exterior product.
Define the stack $\stack{F}_i$ to be the complement of the
pullback of the zero section along this map in $\stack{E}^i$. Informally, we may
think of this as the stack of $i$-tuples of linearly independent vectors in
$\stack{E}$. In particular $\stack{F}_0 \simeq \stack{S}$ and $\stack{F}_n$,
being the frame bundle of $\stack{E}$, is isomorphic to $\stack{T}$ as
$\GL_n$-torsors over $\stack{S}$.

The stack $\stack{E}\times_\stack{S}\stack{F}_i$ is a rank $n$ vector bundle over
$\stack{F}_i$. The map 
$\stack{E}\times_\stack{S}\stack{F}_i \to \bigwedge^{i+1}
\stack{E}\times_\stack{S}\stack{F}_i$ defined by
$(v, v_1, \ldots, v_i) \mapsto (v\wedge v_1 \wedge \cdots \wedge v_i, (v_1, \ldots, v_i))$
may be viewed as a morphism of $\sheaf{O}_{\stack{F}_i}$-modules. Its kernel is a rank $i$
vector bundle over $\stack{F}_i$ whose complement in
$\stack{E}\times_\stack{S}\stack{F}_i$ is $\stack{F}_{i+1}$. Hence the class of
$\stack{F}_{i+1}$ in $\KK_0(\cStack_k)$ is
$(\LL^n - \LL^i)\cdot\{\stack{F}_i\}$, as seen by using Axiom~GS1 once and
Axiom~GS2 twice. By induction on $i$, we therefore get the relation
$$
\{\stack{T}\} = \prod_{i = 0}^{n-1}(\LL^n - \LL^i)\{\stack{S}\}.
$$
The statement about the class of $\GL_n$ follows from the special case where
$\stack{T} = \GL_n$ and $\stack{S} = \ast$. The statement about
a general $\GL_n$-torsor follows by substituting this back into the displayed
equation.
\end{proof}

Let $\Phi$ denote the set of cyclotomic polynomials in the indeterminate $\LL$
and $\Phi_{\LL}$ denote the set $\Phi \cup \{\LL\}$. Note that inverting the
classes of $\GL_n$ for all natural numbers $n$ in $\K0(\cAS_{\stack{S}})$ is the
same thing as forming the localisation $\Phi_{\LL}^{-1}\K0(\cAS_{\stack{S}})$.
Proposition~\ref{gro-bgln} implies that we have a canonical factorisation
$$
\begin{tikzpicture}[description/.style={fill=white,inner sep=2pt}]
\matrix (m) [matrix of math nodes, row sep=3em,
column sep=2.5em, text height=1.5ex, text depth=0.25ex]
{
\K0(\cAS_{\stack{S}}) & \Phi_{\LL}^{-1}\K0(\cAS_{\stack{S}}) \\
& \K0(\cStack_{\stack{S}}) \\
};
\path[->,font=\scriptsize]
(m-1-1)
edge  (m-1-2)
edge (m-2-2)
(m-1-2)
edge node[auto] {$\eta$} (m-2-2);
\end{tikzpicture}
$$
of the natural map $\K0(\cAS_{\stack{S}}) \to \K0(\cStack_{\stack{S}})$.
As shown in \cite[Theorem~1.2]{ekedahl2009gg}, the map $\eta$ in the diagram
above is an isomorphism.

\begin{prop}
\label{gro-st-loc}
Let $\stack{X} \to \stack{S}$ be a morphism of stacks, and let $C$ be a fixed element
in $\KK_0(\cStack_k)$. Assume that for each morphism $S \to \stack{S}$ with $S$ a
scheme, we have the relation $\{\stack{X}_S\} = C \cdot \{S\}$.
Then we also have the relation $\{\stack{X}\} = C\cdot\{\stack{S}\}$.
\end{prop}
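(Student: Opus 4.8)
The plan is to reduce the general statement to the case already handled in Proposition~\ref{gro-as-mul} by combining two tools: the stratification of stacks with affine stabilisers into global quotients by general linear groups (from \cite[Proposition~3.5.9]{kresch1999}, cited in the preliminaries), and the isomorphism $\eta$ identifying $\KK_0(\cStack_{\stack{S}})$ with the localisation $\Phi_{\LL}^{-1}\K0(\cAS_{\stack{S}})$. First I would observe, using the scissors relation \gsi{} and Noetherian induction on $\stack{S}$, that it suffices to prove the relation $\{\stack{X}\} = C\cdot\{\stack{S}\}$ after restricting to a dense open substack of $\stack{S}$; and that, since the hypothesis $\{\stack{X}_S\} = C\cdot\{S\}$ is stable under replacing $\stack{S}$ by any locally closed substack (as $\stack{X}_S$ is then unchanged), the hypothesis persists through such reductions. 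By the Kresch stratification we may therefore assume $\stack{S} = [Y/\GL_n]$ is a global quotient of an algebraic space $Y$ by $\GL_n$ for some $n$.

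Next I would exploit the $\GL_n$-torsor $Y \to \stack{S} = [Y/\GL_n]$. Pulling $\stack{X} \to \stack{S}$ back along this torsor gives $\stack{X}_Y \to Y$ with $Y$ an algebraic space, so the hypothesis applies: $\{\stack{X}_Y\} = C\cdot\{Y\}$ in $\KK_0(\cStack_k)$. On the other hand, $\stack{X}_Y \to \stack{X}$ is itself a $\GL_n$-torsor (base change of $Y \to \stack{S}$ along $\stack{X} \to \stack{S}$), so by Proposition~\ref{gro-bgln} we get $\{\stack{X}_Y\} = \{\GL_n\}\cdot\{\stack{X}\}$; likewise $\{Y\} = \{\GL_n\}\cdot\{\stack{S}\}$. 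Since $\{\GL_n\}$ is invertible in $\KK_0(\cStack_k)$, dividing through yields
\[
\{\stack{X}\} = \{\GL_n\}^{-1}\{\stack{X}_Y\} = \{\GL_n\}^{-1}C\{Y\} = C\{\stack{S}\},
\]
which is exactly the desired relation. Combined with the Noetherian induction and the scissors relation above, this finishes the proof.

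The main obstacle is purely bookkeeping: one must make sure the Noetherian induction is set up so that the inductive hypothesis really does apply to the complementary closed substack, and that the Kresch stratification can be invoked on each stratum while keeping the hypothesis $\{\stack{X}_S\} = C\{S\}$ intact. The latter is automatic because that hypothesis is quantified over \emph{all} schemes mapping to $\stack{S}$, and any scheme mapping to a locally closed substack $\stack{S}' \subset \stack{S}$ also maps to $\stack{S}$, with the same fibre product $\stack{X}_S$. So the genuinely new content is just the reduction to the global-quotient case plus the torsor cancellation, both of which are short once the right objects are in place.
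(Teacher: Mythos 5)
Your proposal is correct and follows essentially the same route as the paper: scissors relations plus noetherian induction to reduce to a Kresch stratum of the form $[S/\GL_n]$, then cancellation of the invertible class $\{\GL_n\}$ using the two $\GL_n$-torsors in the 2-Cartesian square, exactly as in the paper's proof. The only cosmetic difference is that the paper takes the quotient of a scheme, so the hypothesis applies verbatim, whereas with an algebraic space $Y$ you would first stratify $Y$ by locally closed subschemes (or shrink to a dense open subscheme) — a step your noetherian-induction setup already accommodates.
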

\begin{proof}
If $\stack{Z}$ is a closed substack of $\stack{S}$ with complement
$\stack{U}$, it is enough to give a proof for $\stack{X}_{\stack{Z}} \to
\stack{Z}$ and $\stack{X}_{\stack{U}} \to \stack{U}$ separately. Indeed,
Axiom~GS1 gives us the relations
$\{\stack{X}\} = \{\stack{X}_\stack{Z}\} + \{\stack{X}_\stack{U}\}$
and~$\{\stack{S}\} = \{\stack{Z}\} + \{\stack{U}\}$ which then would
give us the desired result. It follows by noetherian induction that it is
enough to prove that the proposition holds over a non-empty open subset of $\stack{S}$.
Hence we may assume that $\stack{S}$ is a global quotient $[S/\GL_n]$ with $S$ a scheme. 
Consider the 2-cartesian square
$$
\begin{tikzpicture}[description/.style={fill=white,inner sep=2pt}]
\matrix (m) [matrix of math nodes, row sep=3em,
column sep=2.5em, text height=1.5ex, text depth=0.25ex]
{
\stack{X}_S & \stack{X} \\
S & \stack{S}. \\
};
\path[->,font=\scriptsize]
(m-1-1)
edge  (m-2-1)
edge (m-1-2)
(m-1-2)
edge (m-2-2)
(m-2-1)
edge (m-2-2);
\end{tikzpicture}
$$
The horizontal arrows are $\GL_n$-torsors, so by Lemma~\ref{gro-bgln}, we get
the identities $\{S\} = \{\GL_n\}\{\stack{S}\}$
and~$\{\stack{X}_S\} = \{\GL_n\}\{\stack{X}\}$.
Combining these relations with the hypothesis
about pullbacks to schemes gives
$\{\GL_n\}\{\stack{X}\} = C \cdot \{\GL_n\}\{\stack{S}\}$. Since the factor
$\{\GL_n\}$ is invertible, we may cancel it to get the desired result.
\end{proof}

\begin{corollary}
\label{gro-st-mul}
Let $G$ be a special group and let $\stack{T} \to \stack{S}$ be a $G$-torsor of
algebraic stacks. Then we have the relation 
$
\{\stack{T}\} = \{G\}\{\stack{S}\}
$
in $K_0(\cStack_k)$.
In particular, since $\ast \to \BB G$ is a $G$-torsor, we have that
$1 = \{G\}\{\BB G\}$, so $\{G\}$ is invertible in $\KK_0(\cStack_k)$.
Furthermore, if $F$ is an algebraic $G$-space and
$\stack{E} \to \stack{S}$ is an $F$-fibration associated to a torsor
as above, then $\{\stack{E}\} = \{F\}\{\stack{S}\}$. 
\end{corollary}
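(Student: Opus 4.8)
The plan is to reduce to the case in which the base is a scheme, where the assertion is essentially Proposition~\ref{gro-as-mul}, and then globalise by means of Proposition~\ref{gro-st-loc}.

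First I would treat the case $\stack{S} = S$ a scheme. Since the special group $G$ is affine, a $G$-torsor $\stack{T} \to S$ is representable by an algebraic space: pulling back along the fppf cover $\stack{T} \to S$ trivialises the torsor, and representability descends along fppf covers. A $G$-torsor is tautologically the $G$-fibration associated to itself, so Proposition~\ref{gro-as-mul} applies with fibre $F = G$ and gives $\{\stack{T}\} = \{G\}$ in $\K0(\cAS_S)$. As $S$ is of finite presentation over $\Spec k$ and $G$ is defined over $k$, the remark following Proposition~\ref{gro-as-mul} upgrades this, via the projection formula, to $\{\stack{T}\} = \{G\}\{S\}$ in $\K0(\cAS_k)$; transporting along the homomorphism $\K0(\cAS_k) \to \KK_0(\cStack_k)$ yields the same identity in $\KK_0(\cStack_k)$.

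For an arbitrary base stack $\stack{S}$ I would then invoke Proposition~\ref{gro-st-loc} with $\stack{X} = \stack{T}$ and $C = \{G\}$: for every morphism $S \to \stack{S}$ with $S$ a scheme, the base change $\stack{T}_S \to S$ is again a $G$-torsor over a scheme, so the previous paragraph shows $\{\stack{T}_S\} = \{G\}\{S\}$, which is exactly the hypothesis of Proposition~\ref{gro-st-loc}. Hence $\{\stack{T}\} = \{G\}\{\stack{S}\}$. Applying this to the universal torsor $\ast \to \BB G$ gives $1 = \{G\}\{\BB G\}$, so $\{G\}$ is a unit in $\KK_0(\cStack_k)$ with inverse $\{\BB G\}$. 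For the final assertion, given an $F$-fibration $\stack{E} \to \stack{S}$ associated to a $G$-torsor, the same reduction via Proposition~\ref{gro-st-loc}, now with $\stack{X} = \stack{E}$ and $C = \{F\}$, brings us to a scheme base $S$; there $\stack{E}_S$ is an algebraic space (again by fppf descent of representability, since $F$ is an algebraic space), so Proposition~\ref{gro-as-mul} together with the subsequent remark gives $\{\stack{E}_S\} = \{F\}\{S\}$, and Proposition~\ref{gro-st-loc} then delivers $\{\stack{E}\} = \{F\}\{\stack{S}\}$.

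The only slightly delicate points are the representability statements — that the total space of a torsor, respectively of an $F$-fibration, over a scheme base is an algebraic space, so that Proposition~\ref{gro-as-mul} may be applied — and keeping track of which Grothendieck ring each equation lives in; the algebraic manipulations themselves are immediate. I expect the fppf-descent argument for representability to be the main thing to state carefully, although for the affine groups appearing in this paper it is entirely standard.
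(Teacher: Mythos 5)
Your proposal is correct and follows exactly the paper's route: the paper proves this corollary by combining Proposition~\ref{gro-st-loc} (reduction to a scheme base) with Proposition~\ref{gro-as-mul} (the fibration relation over algebraic spaces), which is precisely your argument, with the representability and projection-formula details spelled out.
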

\begin{proof}
This is a direct application of Proposition~\ref{gro-st-loc} to
Proposition~\ref{gro-as-mul}.
\end{proof}

\begin{corollary}
\label{gro-st-fact}
Let $G \to H$ be a homomorphism of algebraic groups with $H$ special, and let
$F$ be an algebraic $G$-space with its $G$-action factoring through $H$. Assume
that $\stack{X} \to \stack{S}$ is an $F$-fibration of stacks associated to a
$G$-torsor. Then $\{\stack{X}\} = \{F\}\{\stack{S}\}$ in $\KK_0(\cStack_k)$.
\end{corollary}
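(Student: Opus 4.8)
The plan is to reduce the statement to the final clause of Corollary~\ref{gro-st-mul} by extending the structure group from $G$ to $H$. Write $\stack{T} \to \stack{S}$ for the $G$-torsor to which the $F$-fibration $\stack{X}$ is associated, so that $\stack{X} \simeq \stack{T}\contr{G}F$. The point is that, because the $G$-action on $F$ factors through $H$, this fibration only sees the image of $G$ in $H$, and $H$ is special.

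Concretely, I would first form the $H$-torsor $\stack{T}' := \stack{T}\contr{G}H$ over $\stack{S}$, where $G$ acts on $H$ by left translation through the homomorphism $G \to H$ and $H$ acts on $\stack{T}'$ on the right by right translation on the second factor (this commutes with the left $G$-action by associativity, and local triviality is inherited from $\stack{T}$, so $\stack{T}'$ is indeed a torsor). Next I would establish a natural isomorphism of $F$-fibrations
$$
\stack{T}\contr{G}F \;\simeq\; \stack{T}'\contr{H}F \;=\; (\stack{T}\contr{G}H)\contr{H}F
$$
over $\stack{S}$. This is an associativity property of the contraction product: the assignment $(t, x) \mapsto \big((t, e), x\big)$, with $e \in H$ the identity, descends to the desired isomorphism, the key being that the defining relation $(t\cdot g, x)\sim(t, g\cdot x)$ on the left corresponds — precisely under the hypothesis that $g$ acts on $F$ through its image $\bar g$ in $H$ — to the relation $\big((t, e),\bar g\cdot x\big)\sim\big((t,\bar g), x\big)$ on the right. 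Checking that this map and its evident inverse are well defined is the only verification needed, and it is a routine manipulation of the two equivalence relations.

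With this in hand, $\stack{X} \simeq \stack{T}'\contr{H}F$ is the $F$-fibration associated to the $H$-torsor $\stack{T}' \to \stack{S}$ with $H$ special, so the final assertion of Corollary~\ref{gro-st-mul} applies verbatim and yields $\{\stack{X}\} = \{F\}\{\stack{S}\}$ in $\KK_0(\cStack_k)$. I do not anticipate a genuine obstacle here: the entire content of the argument is the observation that extension of structure group turns the given $G$-fibration into an $H$-fibration with the same fibre $F$, after which specialness of $H$ does the rest.
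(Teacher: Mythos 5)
Your proposal is correct and follows essentially the same route as the paper: extend the structure group by forming the $H$-torsor $\stack{T}\contr{G}H$, use associativity of the contraction product to identify $\stack{X}$ with the $F$-fibration associated to this $H$-torsor, and conclude by Corollary~\ref{gro-st-mul} since $H$ is special. The explicit verification of the associativity isomorphism you sketch is exactly what the paper leaves implicit.
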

\begin{proof}
Denote the $G$-torsor by $\stack{T} \to \stack{S}$. Since the action of $G$ on
$F$ factors through $H$, we may view $F$ as an $H$-space, which we denote
${}_HF$. Then we get a natural identification $F \cong {}_GH_H \contr{H}
{}_HF$, where ${}_GH_H$ is just the group $H$ regarded as a $(G, H)$-space.
The fibration $\stack{X}$ is obtained by taking the contraction product
$\stack{T} \contr{G} ({}_GH_H \contr{H} {}_HF)$. Associativity of the
contraction product gives that $\stack{X}$ is equivalent to the
${}_HF$-fibration associated to the $H$-torsor $\stack{T} \contr{G} {}_GH_H$.
Since $H$ is special, the result follows from
Corollary~\ref{gro-st-mul}.
\end{proof}

\subsection{Expected classes}
\label{gro-expect}
By taking Euler characteristics with compact support, one can get a heuristic
assessment of what a class in $\K0(\cAS_\stack{S})$ or $\K0(\cStack_\stack{S})$
ought to be. In \cite[Section~2]{ekedahl2009gg}, Ekedahl uses a generalised
Euler characteristics taking values in the Grothendieck ring $\K0(\Coh_k)$ of mixed
Galois representations. This yields a ring homomorphism
$\ec\colon\K0(\cAS_{k}) \to \K0(\Coh_k)$. We also get an induced ring
homomorphism $\ec\colon\K0(\cStack_{k}) \to \Phi_\LL^{-1}\K0(\Coh_k)$, which we
denote by the same symbol. We also use the symbol $\LL$ for the image of $\LL$
in $\K0(\Coh_k)$. Since the ring $\K0(\Coh_k)$ contains the polynomial ring
$\ZZ[\LL]$, it follows the same holds for $\K0(\cAS_k)$.
Given a stack $\stack{S}$, we can take a point
$\xi\colon\Spec k \to \stack{S}$.
The existence of the homomorphism
$\xi^\ast\colon \K0(\cAS_\stack{S}) \to \K0(\cAS_k)$ shows that also 
$\K0(\cAS_\stack{S})$ contains $\ZZ[\LL]$, and, by localising, that
$\K0(\cStack_{\stack{S}})$ contains $\Phi_{\LL}^{-1}\ZZ[\LL]$.

Let $X$ be an object in $\cStack_k$. Given that $\ec(a)$ lies in the subring
$\Phi_\LL^{-1}\ZZ[\LL]$ of $\Phi_\LL^{-1}\K0(\Coh_k)$, we call the corresponding
class in $\K0(\cStack_k)$ the \term{expected class} of $X$. As mentioned in the
introduction, the actual class sometimes differs from the expected class.
Indeed, for a connected group $G$, we always have $\ec(T) = \ec(G)\ec(S)$ for
$G$-torsors $T\to S$, whereas the corresponding relation in $\K0(\cStack_k)$
does not necessarily hold for non-special $G$. For a finite group $G$, we always
have $\ec(\BB G) = 1$ (see \cite[Proposition~3.1]{ekedahl2009fg}). For symmetric
groups, the expected value is always assumed
\cite[Theorem~4.3]{ekedahl2009fg}.
\begin{prop}[\thmauth{Ekedahl}]
\label{gro-bsn}
For each field $k$ and each symmetric group $\Sigma_n$, we have $\{\BB
\Sigma_n\} = 1$ in $\K0(\cStack_k)$.
\end{prop}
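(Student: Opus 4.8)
The plan is to argue by induction on $n$, the cases $n\le 1$ being trivial, and for the inductive step to reduce to a class computation for an honest variety carrying a $\Sigma_n$-action and then stratify. Consider the $n$-dimensional permutation representation $V=\AA^n$ of $\Sigma_n$. The projection $[V/\Sigma_n]\to\BB\Sigma_n$ is a rank-$n$ vector bundle, so Axiom~GS2 gives $\{[V/\Sigma_n]\}=\LL^n\{\BB\Sigma_n\}$; hence it suffices to show $\{[V/\Sigma_n]\}=\LL^n$. (Equivalently, via $\Sigma_n\hookrightarrow\GL_n$ and Corollary~\ref{gro-st-mul}, the claim is $\{\GL_n/\Sigma_n\}=\{\GL_n\}$, but the affine model is more convenient for stratifying.)

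Stratify $V$, $\Sigma_n$-equivariantly, by the set partition of $\{1,\dots,n\}$ recording which coordinates coincide; the strata $V_\lambda$ are indexed by partitions $\lambda\vdash n$. On the open stratum $V_{(1^n)}$ the action is free, with quotient the variety of monic squarefree degree-$n$ polynomials, of class $\LL^n-\LL^{n-1}$. The closed stratum $V_{(n)}$ is the main diagonal $\AA^1$ with trivial action, so $[V_{(n)}/\Sigma_n]=\AA^1\times\BB\Sigma_n$ has class $\LL\{\BB\Sigma_n\}$. For every other $\lambda$ one identifies $[V_\lambda/\Sigma_n]$ with $[\Conf_{\ell(\lambda)}(\AA^1)/H_\lambda]$, where $\ell(\lambda)$ is the number of parts, $H_\lambda=\prod_i(\Sigma_i\wr\Sigma_{m_i})$ with $m_i$ the number of parts equal to $i$, and $H_\lambda$ acts on the configuration space through the quotient $\prod_i\Sigma_{m_i}$ that permutes equal-size parts. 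This stack fibres over the variety $\Conf_{\ell(\lambda)}(\AA^1)/\prod_i\Sigma_{m_i}$, whose class is a polynomial in $\LL$ by an inclusion-exclusion over discriminant and resultant loci reducing everything to symmetric powers of $\AA^1$, with fibre $\prod_i(\BB\Sigma_i)^{m_i}$.

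Since every part $i$ occurring for $\lambda\neq(n)$ is strictly smaller than $n$, the inductive hypothesis makes these fibres have class $1$, and one deduces $\{[V_\lambda/\Sigma_n]\}\in\ZZ[\LL]$ for all $\lambda\neq(n)$. Summing the scissors relation over $\lambda$ and isolating the diagonal term yields $(\LL^n-\LL)\{\BB\Sigma_n\}=p(\LL)$ for an explicit $p\in\ZZ[\LL]$. As $\LL^n-\LL$ is invertible in $\K0(\cStack_k)$, this shows $\{\BB\Sigma_n\}$ lies in the subring $\Phi_\LL^{-1}\ZZ[\LL]$, i.e. is a rational function in $\LL$. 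Now $\ec$ restricts to an injection on $\Phi_\LL^{-1}\ZZ[\LL]$, while $\ec(\BB\Sigma_n)=1$ for any finite group; hence $\{\BB\Sigma_n\}=1$.

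The main obstacle is the middle step: the recursion does not close up on symmetric groups alone, because $[V_\lambda/\Sigma_n]$ involves classifying stacks of the wreath products $\Sigma_i\wr\Sigma_{m_i}$, and one needs $\{[\Conf_{\ell(\lambda)}(\AA^1)/H_\lambda]\}$ to be polynomial in $\LL$. Making this rigorous requires running the induction over the larger family of iterated wreath products of symmetric groups (ordered, say, by group order), together with the relative statement that the associated classifying stacks have class $1$ over an arbitrary base; the latter is reduced to the absolute statements for smaller groups through Proposition~\ref{gro-st-loc}. Alternatively, if one wishes to avoid the Euler-characteristic input at the end, one must carry out the combinatorial sum and verify the identity $p(\LL)=\LL^n-\LL$ directly, which is a considerably longer calculation.
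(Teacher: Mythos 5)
The paper does not prove this proposition at all: it is quoted from Ekedahl \cite[Thm.~4.3]{ekedahl2009fg}, so your argument has to stand on its own. Its outer structure is fine: the reduction to $\{[\AA^n/\Sigma_n]\}=\LL^n$, the identification of the open stratum with the space of monic squarefree polynomials of class $\LL^n-\LL^{n-1}$, the diagonal term $\LL\{\BB\Sigma_n\}$, and the concluding ``expected class'' step (rationality in $\LL$ plus $\ec(\BB\Sigma_n)=1$ and injectivity of $\ec$ on $\Phi_\LL^{-1}\ZZ[\LL]$) are all consistent with the machinery of Section~\ref{gro}. The problem is the middle step, which you flag yourself, and the repair you propose does not close it. For $\lambda\neq(n),(1^n)$ the stratum $[V_\lambda/\Sigma_n]\simeq[\Conf_{\ell}(\AA^1)/H_\lambda]$ is \emph{not} the product of $Y_\lambda:=\Conf_\ell(\AA^1)/\prod_i\Sigma_{m_i}$ with $\prod_i(\BB\Sigma_i)^{m_i}$: the kernel $\prod_i\Sigma_i^{m_i}$ acts trivially and the quotient $\prod_i\Sigma_{m_i}$ acts freely, so the stratum is the classifying stack over $Y_\lambda$ of the \emph{twisted form} $\prod_i\Res_{S_i'/Y_\lambda}\Sigma_i$ of $\prod_i\Sigma_i^{m_i}$, where $S_i'\to Y_\lambda$ is the degree-$m_i$ étale cover of size-$i$ blocks. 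This fibration is associated to a torsor under the finite, hence non-special, group $\prod_i\Sigma_{m_i}$, so Corollary~\ref{gro-st-mul} does not apply, and the inductive hypothesis $\{\BB_k\Sigma_i\}=1$ says nothing about it. Proposition~\ref{gro-st-loc} cannot bridge the gap either: it only replaces a stacky base by arbitrary schemes mapping to it, and pulling back to a scheme $T\to Y_\lambda$ still leaves the twisted statement $\{\BB_T(\Res_{T_i'/T}\Sigma_i)\}=\{T\}$, which is not the absolute statement for smaller symmetric groups over the ground field. Such twisted statements are exactly the kind that can fail for finite groups (compare $\{\BB\ZZ/47\ZZ\}\neq1$), so they genuinely require proof; establishing them (say in the form $\{\Res_{S'/S}\BB\Sigma_i\}=\{S\}$ for all finite étale $S'\to S$, equivalently $\{[X^n/\Sigma_n]\}=\{X^n/\Sigma_n\}$ for all $X$) is the real content of Ekedahl's theorem, not a formal consequence of the untwisted case.

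Concretely, your induction does go through unaided for $n\le 3$ (for $\lambda=(2,1)$ the group $\prod_i\Sigma_{m_i}$ is trivial and the stratum really is $\Conf_2(\AA^1)\times\BB\Sigma_2$), but it breaks at $n=4$, $\lambda=(2,2)$, where the stratum is $\BB_{Y}\Res_{S'/Y}\Sigma_2$ for a nontrivial double cover $S'\to Y$; that particular case can still be rescued by a Kummer or Artin--Schreier sequence and Proposition~\ref{gro-exact}, but no such ad hoc argument is available for general $\Sigma_i$, and this is where the proof sketch stops being a proof. Note also that your fallback of ``verifying $p(\LL)=\LL^n-\LL$ combinatorially'' does not avoid the issue, since writing down $p$ already presupposes that these twisted strata have class $\{Y_\lambda\}$. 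The strengthened induction you allude to (over wreath products, with relative statements over arbitrary bases) is the right instinct, but it must carry the twisted/relative statement itself through the induction rather than deduce it from the absolute one via Proposition~\ref{gro-st-loc}.
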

\noindent
Ekedahl also gives examples of finite groups for which the class of the
classifying stack does not assume the expected value. For instance, we have
$\{\BB\ZZ/47\ZZ\} \neq 1$ in $\K0(\cStack_\QQ)$
(see~\cite[Corollary~5.8]{ekedahl2009fg}). However, for many smaller cyclic
groups, the class is the expected one (see~\cite[Section~3]{ekedahl2009fg}).
\begin{prop}[\thmauth{Ekedahl}]
\label{gro-bcyclic}
Let $k$ be an arbitrary field, and $p$ a prime number from the set $\{2,
3, 5, 7, 11\}$. Then $\{\BB \ZZ/p\ZZ\} = 1$ in $\K0(\cStack_k)$.
\end{prop}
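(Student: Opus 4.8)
\noindent
The plan is to realise $\ZZ/p\ZZ$ inside a special group, stratify a linear representation of it into its fixed and free loci, and play the resulting scissors relation GS1 against the vector bundle relation GS2. I would dispose of the characteristic $p$ case first, since it works unconditionally: the Artin--Schreier sequence $0 \to \ZZ/p\ZZ \to \GGa \xrightarrow{F - \id} \GGa \to 0$ presents $\ZZ/p\ZZ$ as a subgroup of $\GGa$ and $\GGa \to \GGa,\ x \mapsto x^p - x$ as a $\ZZ/p\ZZ$-torsor, so the free translation action of $\ZZ/p\ZZ$ on $\GGa$ has quotient $\GGa$ and $\{[\GGa/(\ZZ/p\ZZ)]\} = \LL$. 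This action factors through $\GGa$ acting on itself, and $\GGa$ is special, so Corollary~\ref{gro-st-fact} applied to $[\GGa/(\ZZ/p\ZZ)] \to \BB(\ZZ/p\ZZ)$ gives $\LL = \LL\,\{\BB(\ZZ/p\ZZ)\}$; cancelling the invertible class $\LL$ finishes this case for every prime $p$.

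So assume $\mathrm{char}\,k \neq p$, put $k' = k(\zeta_p)$ and let $\Gamma = \Gal(k'/k)$, which is cyclic of order $d \mid p - 1$. The decomposition $k[\ZZ/p\ZZ] \cong k \times (k')^{(p-1)/d}$ furnishes a faithful $d$-dimensional representation $W$ of $\ZZ/p\ZZ$ over $k$ --- the space $k'$ with a generator acting by multiplication by a primitive root of unity --- and since that root of unity is a unit, the stabiliser of every nonzero vector is trivial; hence the fixed locus on $\AA(W)$ is the origin, and the complement $U := \AA(W)\setminus\{0\}$, of class $\LL^d - 1$, carries a free action. Combining GS2 for the vector bundle $[\AA(W)/(\ZZ/p\ZZ)] \to \BB(\ZZ/p\ZZ)$ with GS1 for this stratification yields
\[
\LL^d\,\{\BB(\ZZ/p\ZZ)\} \;=\; \{\BB(\ZZ/p\ZZ)\} + \{Q\}, \qquad Q := [U/(\ZZ/p\ZZ)],
\]
so that $\{Q\} = (\LL^d - 1)\,\{\BB(\ZZ/p\ZZ)\}$. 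As $\LL^d - 1 = \prod_{e\mid d}\Phi_e(\LL)$ is invertible in $\KK_0(\cStack_k)$, the assertion is equivalent to the identity $\{Q\} = \LL^d - 1$. Here $Q$ is an honest $k$-variety (the action being free), and base changing to $\bar k$ --- or to $k$ itself already when $\zeta_p \in k$ --- makes $\ZZ/p\ZZ$ isomorphic to $\mu_p \subseteq \GGm$; running the same stratification for $\mu_p$ and using the unconditional equality $\{\BB\mu_p\} = 1$ (which follows from $\GGm \xrightarrow{p} \GGm$ being a $\mu_p$-torsor with $\GGm/\mu_p \cong \GGm$) shows that $\{Q\}$ becomes $\LL^d - 1$.

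The remaining point, and the genuine obstacle, is to upgrade this last identity from $\bar k$ to $k$: one must show that the twisted $k$-form $Q$ --- whose geometric model is the quotient of $\AA^d \setminus \{0\}$ by a primitive diagonal $\mu_p$-action --- still has class $\LL^d - 1$ in $\KK_0(\cVar_k)$, and not merely after base change. This is sensitive to how far $Q$ is from being split: it amounts to exhibiting enough rationality of $Q$ over $k$, e.g.\ via an explicit rational parametrisation of the generic $\ZZ/p\ZZ$-extension, or, equivalently, via the quasi-split character of an associated $\Gamma$-torus in the sense made precise later in this paper and in the work of Rökaeus \cite{rokeaus2007}. For $p \in \{2,3,5,7,11\}$, where $\Gamma$ has order dividing one of $1,2,4,6,10$, one can carry this out by a finite case analysis; for large $p$ it genuinely fails, consistently with the fact quoted above that $\{\BB\,\ZZ/47\ZZ\} \neq 1$ in $\KK_0(\cStack_\QQ)$, so the argument does not generalise.
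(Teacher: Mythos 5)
Your characteristic-$p$ case is fine (Artin--Schreier plus Proposition~\ref{gro-exact} or Corollary~\ref{gro-st-fact} does give $\{\BB\,\ZZ/p\ZZ\}=1$ unconditionally), and your reduction in the case $\mathrm{char}\,k\neq p$ is also correct: with $W=k(\zeta_p)$ as a faithful $d$-dimensional representation, GS1 and GS2 give $\{Q\}=(\LL^d-1)\{\BB\,\ZZ/p\ZZ\}$ with $\LL^d-1$ invertible, so the statement is equivalent to $\{Q\}=\LL^d-1$ in $\K0(\cStack_k)$. But that last identity is precisely where the entire content of the proposition lies, and you do not prove it. Passing to $\bar k$ tells you nothing, since $\K0$ does not embed under base change; and your proposed fix --- ``exhibiting enough rationality of $Q$ over $k$'' settled ``by a finite case analysis'' for $p\in\{2,3,5,7,11\}$ --- is an assertion, not an argument. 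Note also that rationality of $Q$ by itself would not suffice: you need an equality of classes, which in this circle of ideas is obtained by embedding $\ZZ/p\ZZ$ into the quasi-split (hence special) torus $\Res_{k(\zeta_p)/k}\GGm$, so that $\{\BB\,\ZZ/p\ZZ\}=\{T\}/\{\Res_{k(\zeta_p)/k}\GGm\}$ with $T$ the quotient torus, and then proving $\{T\}=\{\Res_{k(\zeta_p)/k}\GGm\}$. Establishing this for the specific primes $2,3,5,7,11$ is a genuinely nontrivial arithmetic computation with the character lattices of these cyclotomic tori (it is exactly what can fail, e.g.\ for $p=47$), and nothing in your sketch explains why the bound $p\le 11$ makes it work --- for $p=11$ the relevant torus has dimension $10$, so no general low-dimensional (stable) rationality theorem applies.

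For comparison: the paper does not prove this proposition at all; it is quoted as a theorem of Ekedahl with a reference to \cite{ekedahl2009fg}, where the hard step sketched above is carried out. So your write-up is best viewed as a correct reduction of the statement to Ekedahl's key computation, together with an unproved claim that this computation can be done for the listed primes; as a self-contained proof it has a genuine gap at its decisive step.
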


\subsection{Computing the class of a classifying stack}
\label{gro-com}
section which frequently will be used in actual computations. We also make
some sample computations.
\begin{prop}
\label{gro-rep}
Let $G$ be an affine group over a field $k$ acting linearly on an
$n$-dimensional $k$-vector space $V$. Then we have the relations
$$
\{\BB G\} = \{[V/G]\}\cdot \LL^{-n} = \{[\PP(V)/G]\}\cdot\frac{\LL - 1}{\LL^n - 1}
$$
in $\KK_0(\cStack_k)$.
\end{prop}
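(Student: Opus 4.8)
The plan is to exhibit both displayed identities as instances of the fibration results from the previous subsection, applied to the canonical $\GL_n$- or $\GGm$-equivariant data attached to the linear action. First I would treat the equality $\{\BB G\} = \{[V/G]\}\cdot\LL^{-n}$. Consider the morphism of quotient stacks $[V/G] \to [\ast/G] = \BB G$ induced by the $G$-equivariant structure map $V \to \ast$. This is a vector bundle of rank $n$ over $\BB G$: indeed, pulling back along the atlas $\ast \to \BB G$ recovers $V \to \ast$, and descent for the $G$-action turns $V$ into a locally free sheaf of rank $n$ on $\BB G$. By Axiom~GS2 we get $\{[V/G]\} = \LL^n\{\BB G\}$, and since $\LL$ is invertible in $\KK_0(\cStack_k)$ this rearranges to $\{\BB G\} = \{[V/G]\}\cdot\LL^{-n}$.

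For the second equality I would relate $[V/G]$ and $[\PP(V)/G]$ through the origin. Write $V^\circ = V \setminus \{0\}$ for the complement of the (pullback of the) zero section; the zero section is a closed substack of $[V/G]$ isomorphic to $\BB G$, so by Axiom~GS1 we have $\{[V/G]\} = \{\BB G\} + \{[V^\circ/G]\}$. Now $[V^\circ/G] \to [\PP(V)/G]$ is a $\GGm$-torsor: the scaling action of $\GGm$ on $V^\circ$ commutes with $G$, is free, and the quotient is $\PP(V)$ with its residual $G$-action. Since $\GGm$ is special, Corollary~\ref{gro-st-mul} gives $\{[V^\circ/G]\} = \{\GGm\}\cdot\{[\PP(V)/G]\} = (\LL - 1)\{[\PP(V)/G]\}$. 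Combining with the first equality, $\LL^n\{\BB G\} = \{\BB G\} + (\LL - 1)\{[\PP(V)/G]\}$, i.e. $(\LL^n - 1)\{\BB G\} = (\LL - 1)\{[\PP(V)/G]\}$; dividing by the invertible element $\LL^n - 1$ (a product of cyclotomic polynomials in $\LL$, hence a unit in $\KK_0(\cStack_k)$) yields $\{\BB G\} = \{[\PP(V)/G]\}\cdot\frac{\LL - 1}{\LL^n - 1}$.

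The only genuinely delicate point is verifying that $[V^\circ/G] \to [\PP(V)/G]$ really is a $\GGm$-torsor of stacks rather than merely a $\GGm$-torsor on an atlas — one must check the $G$- and $\GGm$-actions commute (they do, as $G$ acts linearly and $\GGm$ acts by scalars, so the two actions assemble into a $G \times \GGm$-action on $V^\circ$) and that passing to the quotient by $G$ first and by $\GGm$ second agrees with the quotient $[\PP(V)/G]$; this is a routine manipulation of iterated quotient stacks. Everything else is a direct bookkeeping with GS1, GS2, and Corollary~\ref{gro-st-mul}, together with the invertibility of $\LL$ and of $\LL^n - 1$ already established in the section. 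I would also remark that $V = 0$ forces $n = 0$ and the statement degenerates appropriately, so no separate edge case is needed.
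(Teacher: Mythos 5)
Your proof is correct, but it coincides with the paper's argument only for the first identity. The paper deduces both equalities at once from Corollary~\ref{gro-st-fact}: since the $G$-actions on $V$ and on $\PP(V)$ factor through the special group $\GL_n$ (this is exactly where linearity enters), the fibrations $[V/G]\to\BB G$ and $[\PP(V)/G]\to\BB G$ associated to the universal torsor $\ast\to\BB G$ satisfy $\{[V/G]\}=\LL^n\{\BB G\}$ and $\{[\PP(V)/G]\}=\frac{\LL^n-1}{\LL-1}\{\BB G\}$, and one simply rearranges. Your treatment of the affine identity, viewing $[V/G]\to\BB G$ as the rank-$n$ vector bundle obtained by descent and quoting GS2, is the same computation in slightly different clothing. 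For the projective identity you take a genuinely different route: you excise the zero section $\BB G\subset[V/G]$ with GS1 and then divide out the scaling action, noting that $[V^\circ/G]\to[\PP(V)/G]$ is a $\GGm$-torsor of stacks and invoking Corollary~\ref{gro-st-mul} for the special group $\GGm$. This is valid: linearity now enters through the commutation of the $G$-action with scalar multiplication, i.e.\ the $G$-equivariance of $V^\circ\to\PP(V)$, which plays the role that the factoring-through-$\GL_n$ hypothesis plays in Corollary~\ref{gro-st-fact}. Your version is marginally more elementary, using only GS1, GS2 and specialness of $\GGm$, at the cost of dividing by $\LL^n-1$ (invertible, being a product of cyclotomic polynomials), whereas the paper's argument yields $\{[\PP(V)/G]\}=\{\PP^{n-1}\}\{\BB G\}$ directly with no division. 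Your closing remark on $n=0$ is immaterial: there the projective formula is not even well posed, since $\LL^0-1$ is not invertible, and the statement tacitly assumes $n\geq 1$ for that equality.
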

\begin{proof}
For the first equality, we apply Corollary~\ref{gro-st-fact},
with $\GL_n$ as our special group, the space $V$ as our fibre and
the 1-morphism $[V/G] \to \BB G$ as our $V$-fibration associated to
the $G$-torsor $\ast \to \BB G$. For the second equality, we instead have
the fibre $\PP(V)$ and the $\PP(V)$-fibration $[\PP(V)/G] \to \BB G$
associated to the same torsor as above. 
\end{proof}
It should be stressed that it is crucial that the action of $G$
on $\PP(V)$ comes from a linear action on $V$ in the proposition above.

\begin{prop}
\label{gro-exact}
Let $1 \to G \to E \to K \to 1$ be an exact sequence of algebraic space
groups, flat over an algebraic stack $\stack{S}$, with $E$ special. Then we have
the relation
$$
\{\BB_{\stack{S}} G\} = \{K\}/\{E\}
$$
in $\KK_0(\cStack_\stack{S})$.
\end{prop}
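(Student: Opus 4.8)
The plan is to exhibit $\BB_{\stack{S}} G$ as the total space of a fibration over $\BB_{\stack{S}} E$ with fibre $K$, and then to apply the multiplicativity results already established. Concretely, the surjection $E \to K$ with kernel $G$ gives a short exact sequence of sheaves of groups over $\stack{S}$, and I would first observe that the induced $1$-morphism $\BB_{\stack{S}} G \to \BB_{\stack{S}} E$ is naturally a $K$-fibration: a $G$-torsor $T$ over a test object maps to the $E$-torsor $T \contr{G} E$, and the fibre of this map over a fixed $E$-torsor $P$ is the groupoid of reductions of structure group of $P$ from $E$ to $G$, which is precisely the quotient $P \contr{E} (E/G) = P \contr{E} K$, an $K$-fibration associated to the $E$-torsor $P$. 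More precisely, and more cleanly, I would use the universal $E$-torsor $\ast \to \BB_{\stack{S}} E$ directly: the associated $K$-fibration $[K/E] \to \BB_{\stack{S}} E$ (where $E$ acts on $K = E/G$ by left translation through the quotient map) is canonically equivalent to $\BB_{\stack{S}} G$, since $[K/E] = [(E/G)/E] \simeq \BB_{\stack{S}} G$.

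Granting that identification, the computation is immediate. The fibration $\BB_{\stack{S}} G \simeq [K/E] \to \BB_{\stack{S}} E$ is a $K$-fibration associated to the $E$-torsor $\ast \to \BB_{\stack{S}} E$, and since $E$ is special, Corollary~\ref{gro-st-mul} applies to give $\{\BB_{\stack{S}} G\} = \{K\}\cdot\{\BB_{\stack{S}} E\}$ in $\KK_0(\cStack_{\stack{S}})$. But again because $E$ is special, the same corollary applied to the universal torsor $\ast \to \BB_{\stack{S}} E$ gives $1 = \{E\}\cdot\{\BB_{\stack{S}} E\}$, so that $\{E\}$ is invertible and $\{\BB_{\stack{S}} E\} = \{E\}^{-1}$. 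Combining the two identities yields $\{\BB_{\stack{S}} G\} = \{K\}/\{E\}$, as desired. One should note that $\{K\}$ here denotes the class of $K$ as an algebraic space over $\stack{S}$; the flatness hypothesis ensures the quotient $K = E/G$ exists as an algebraic space group, flat over $\stack{S}$, so the class makes sense.

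The main obstacle — really the only point requiring care — is justifying the equivalence $[K/E] \simeq \BB_{\stack{S}} G$ and the claim that the map to $\BB_{\stack{S}} E$ it carries is genuinely the $K$-fibration associated to the universal $E$-torsor. This is the statement that for a short exact sequence $1 \to G \to E \to K \to 1$ of flat group sheaves the quotient stack $[(E/G)/E]$, formed using the left-translation action, classifies $G$-torsors; equivalently, that the fibre of $\BB_{\stack{S}} G \to \BB_{\stack{S}} E$ over an $E$-torsor $P$ is the sheaf of sections of $P \contr{E} (E/G)$, which is an $E/G = K$-pseudo-torsor and in fact a torsor by the surjectivity (as fppf sheaves) of $E \to K$. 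This is standard descent/non-abelian cohomology bookkeeping (compare Giraud), so I would state it as such and give at most a one-line indication via the contraction-product formalism recalled in Section~\ref{prel}, namely that $E/G \cong {}_E E_E \contr{E}(E/G)$ and associativity of the contraction product identifies the fibration $\BB_{\stack{S}} G \to \BB_{\stack{S}} E$ with the $K$-fibration associated to $\ast \to \BB_{\stack{S}} E$. Everything downstream is then a formal consequence of Corollary~\ref{gro-st-mul}.
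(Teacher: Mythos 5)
Your proposal is correct and is essentially the paper's argument: both rest on the identification $[K/E] \simeq \BB_{\stack{S}} G$ and on Corollary~\ref{gro-st-mul} via the specialness of $E$. The only (cosmetic) difference is that the paper applies multiplicativity to the $E$-torsor $K \to [K/E]$ to get $\{K\} = \{E\}\{[K/E]\}$ directly, whereas you apply the associated-fibration form to the $K$-fibration $[K/E] \to \BB_{\stack{S}} E$ and then use $\{\BB_{\stack{S}} E\} = \{E\}^{-1}$, which is the same computation rearranged.
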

\begin{proof}
The action of $E$ on $K$ by left translation gives an $E$-torsor $K \to [K/E]$,
so $\{K\} = \{E\}\{[K/E]\}$ by Corollary~\ref{gro-st-mul}.
By the same corollary, we know that the class $\{E\}$ is invertible.
Hence the result follows follows from the fact that the stack $[K/E]$ is
equivalent to $\BB_{\stack{S}} G$.
\end{proof}

\begin{remark}
Note that since $E$ is special, it has affine fibres. This property
is stable under taking closed subgroups, so the same is true for $G$.
Since $G$ is assumed to be flat, and also of finite presentation by
our default assumption, over $\stack{S}$, it follows that its classifying stack
is algebraic with affine stabilisers. Hence the statement above makes sense.
\end{remark}

As a direct application of Proposition~\ref{gro-exact}, we compute the class
of the classifying stack of the group of $n$-th roots of unity.
\begin{prop}
\label{gro-units}
The class of the classifying stack $\BB \mu_n$ for the group of $n$-th roots of
unity is trivial in $\KK_0(\cStack_k)$ for any field $k$.
\end{prop}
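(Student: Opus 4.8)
The plan is to realise $\mu_n$ as the kernel of a map between special groups and then quote Proposition~\ref{gro-exact}. The obvious candidate is the Kummer sequence
$$
1 \to \mu_n \to \GGm \xrightarrow{\;(-)^n\;} \GGm \to 1,
$$
the second arrow being the $n$-th power homomorphism. First I would check that this really is a short exact sequence of fppf sheaves of groups over $k$: by definition $\mu_n$ is the kernel of $(-)^n$, and the $n$-th power map corresponds to the ring map $k[t,t^{-1}] \to k[s,s^{-1}]$, $t \mapsto s^n$, which is free of rank $n$, hence faithfully flat, hence an epimorphism for the fppf topology. This holds over any field, including the case $\operatorname{char} k \mid n$, where $\mu_n$ fails to be reduced but is still a finite flat group scheme; in particular $\mu_n$ is flat and of finite presentation over $\Spec k$, so all the standing hypotheses of Proposition~\ref{gro-exact} are satisfied with $G = \mu_n$ and $E = K = \GGm$.

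Next I would invoke the fact — recorded among the standard examples after the definition of \emph{special}, and also a consequence of $\GGm$ being a split torus — that $\GGm$ is special. Proposition~\ref{gro-exact} then gives immediately
$$
\{\BB \mu_n\} = \{K\}/\{E\} = \{\GGm\}/\{\GGm\} = 1,
$$
the quotient making sense because $\{\GGm\}$ is invertible in $\KK_0(\cStack_k)$, as already observed in the proof of that proposition (alternatively, $\{\GGm\} = \LL - 1$ is a cyclotomic polynomial in $\LL$, hence inverted by construction).

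I do not anticipate any genuine obstacle: once the Kummer sequence is in place, the statement is immediate from the machinery of the section. The only point requiring a moment's care is uniformity in the characteristic, i.e.\ not silently assuming $\mu_n$ is étale; but since Proposition~\ref{gro-exact} is formulated for flat group schemes of finite presentation, nothing needs to change. If one preferred to avoid Proposition~\ref{gro-exact}, essentially the same computation can be run by stratifying $\AA^1 = \{0\} \sqcup \GGm$, noting $[\GGm/\mu_n] \cong \GGm$ and $[\{0\}/\mu_n] \cong \BB\mu_n$, and applying Proposition~\ref{gro-rep} to the scaling action of $\mu_n$ on $\AA^1$; this yields $(\LL - 1)\{\BB\mu_n\} = \LL - 1$ and hence the same conclusion.
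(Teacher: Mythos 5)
Your proposal is correct and follows exactly the paper's argument: the paper also deduces the result from the Kummer sequence $1 \to \mu_n \to \GGm \to \GGm \to 1$ together with the specialness of $\GGm$ and Proposition~\ref{gro-exact}. Your extra verifications (fppf surjectivity of the $n$-th power map in all characteristics, flatness of $\mu_n$) and the alternative stratification of $[\AA^1/\mu_n]$ are sound but not needed beyond what the paper records.
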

\begin{proof}
Consider the Kummer sequence
$
1 \to \mu_n \to \GGm \to \GGm \to 1.
$
Since $\GG_m$ is special, the statement follows from
Proposition~\ref{gro-exact}.
\end{proof}

For more complicated groups $G$, it is harder to compute the class of
the classifying stack $\BB G$.
Our strategy will be to find a linear representation $V$ and invoke
Corollary~\ref{gro-rep}. This reduces the problem to computing the
class of the stack $[V/G]$. Stratifying $V$ into locally closed
$G$-invariant subschemes allows us to reduce the problem further.

We end the section by working out some examples using the techniques
described in this section. The results will be used in the next section.

\begin{prop}
\label{gro-class-gm2}
Consider the group $G = \GGm \rtimes \Sigma_2$, with $\Sigma_2$ acting as the
automorphism group of $\GGm$. If 2 is invertible in the field $k$, then
$\{\BB G\} = \LL(\LL^2-1)^{-1}$ in $\KK_0(\cStack_k)$.
\end{prop}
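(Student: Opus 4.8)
The plan is to realise $\BB G$ through a faithful $2$-dimensional linear representation and then stratify the representation space into $G$-orbits, following the strategy indicated just before the statement.

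First I would embed $G$ into $\GL_2$ by sending $t \in \GGm$ to $\diag(t, t^{-1})$ and the non-trivial element $s$ of $\Sigma_2$ to the permutation matrix $\left(\begin{smallmatrix} 0 & 1 \\ 1 & 0 \end{smallmatrix}\right)$. Since conjugating $\diag(t, t^{-1})$ by this matrix yields $\diag(t^{-1}, t)$, this is a homomorphism of group schemes, and it is plainly faithful. Write $V = \AA^2$ for the resulting representation, so that $G$ acts by $(t, 1)\cdot(x, y) = (tx, t^{-1}y)$ and by $s\cdot(x, y) = (y, x)$. By the first equality in Proposition~\ref{gro-rep} it then suffices to compute $\{[V/G]\}$, since $\{\BB G\} = \LL^{-2}\{[V/G]\}$.

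The key observation is that the function $(x, y) \mapsto xy$ on $V$ is $G$-invariant. I would use it to stratify $V$ into the three $G$-invariant locally closed subschemes $U = \{xy \neq 0\}$, the union $W = \{xy = 0\}\setminus\{(0,0)\}$ of the two punctured coordinate axes, and the origin. Applying the scissors relation GS1 and noting that the quotient of the origin is $\BB G$ itself gives
$$\LL^2\{\BB G\} = \{[U/G]\} + \{[W/G]\} + \{\BB G\}.$$
It remains to identify the first two summands. On $U$ the normal subgroup $\GGm \triangleleft G$ acts freely, and $(x,y)\mapsto xy$ exhibits $U$ as a $\GGm$-torsor over $\GGm$, so $[U/\GGm] \simeq \GGm$; since $xy$ is moreover $\Sigma_2$-invariant, the induced action of $\Sigma_2 = G/\GGm$ on this quotient is trivial, whence $[U/G] \simeq \GGm \times \BB\Sigma_2$ and $\{[U/G]\} = (\LL - 1)\{\BB\Sigma_2\} = \LL - 1$ by Proposition~\ref{gro-bsn}. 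On $W$ the two components are interchanged by $s$ and each is acted on simply transitively by $\GGm$, so $[W/\GGm] \simeq \ast \sqcup \ast$ and hence $[W/G] \simeq \ast$, giving $\{[W/G]\} = 1$.

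Plugging these in, $(\LL^2 - 1)\{\BB G\} = (\LL - 1) + 1 = \LL$; since $\LL - 1$ and $\LL + 1$ are cyclotomic polynomials in $\LL$, hence invertible in $\KK_0(\cStack_k)$, we conclude $\{\BB G\} = \LL(\LL^2 - 1)^{-1}$. This also agrees with the expected class $\LL^{2}/\{\PGL_2\}$, as it should, since $G$ is exactly the group $\PMon_2$ of monomial matrices in $\PGL_2$. The step that needs the most care is the identification of $\{[U/G]\}$ and $\{[W/G]\}$: one must verify that the residual $\Sigma_2$-action on $[U/\GGm]\simeq\GGm$ really is trivial, so that $[U/G]$ is an honest product with $\BB\Sigma_2$ rather than a non-trivial gerbe over $\GGm$, and that $[W/G]$ carries no residual automorphisms — both of which follow from the explicit description of the actions (and from the fact that $[U/\GGm]$ is a scheme, so its automorphism functor has no higher cells), but deserve to be spelled out. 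I remark that, presented this way, the argument does not seem to require that $2$ be invertible in $k$.
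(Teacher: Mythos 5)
Your proof is correct, but it takes a slightly different route than the one in the paper, so let me compare. The paper uses the same representation but passes to its projectivisation: by the second equality of Proposition~\ref{gro-rep} one has $\{\BB G\} = \{[\PP^1/G]\}(\LL+1)^{-1}$, and $\PP^1$ decomposes into exactly two $G$-orbits, the closed one through $(1\hcolon 0)$ with stabiliser $\GGm$ and the open one through $(1\hcolon 1)$ with stabiliser $\mu_2\times\Sigma_2$; the hypothesis that $2$ be invertible enters precisely there, to keep that stabiliser reduced and to identify $\BB(\mu_2\times\Sigma_2)\simeq\BB\mu_2\times\BB\mu_2$ of class $1$ via Proposition~\ref{gro-units}. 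You instead stay on the affine space $V=\AA^2$ and use the first equality of Proposition~\ref{gro-rep}; your open stratum $U=\{xy\neq 0\}$ is not a single orbit but, as you show, a trivial $\Sigma_2$-gerbe over $\GGm$ via the invariant $xy$, so its class is $(\LL-1)\{\BB\Sigma_2\}=\LL-1$ by Proposition~\ref{gro-bsn}, while $W$ is a trivial $G$-torsor of class $1$. The delicate step you flag — that $[U/G]\simeq[(U/\GGm)/\Sigma_2]$ with trivial residual action, hence an honest product $\GGm\times\BB\Sigma_2$ — is sound: $U\to\GGm$, $(x,y)\mapsto xy$, is a (trivial) $\GGm$-torsor, so $[U/G]\simeq[(U/\GGm)/\Sigma_2]$ by the standard Morita/torsor argument, and since $U/\GGm$ is a scheme the $\Sigma_2$-action strictifies and is trivial because $xy$ is $\Sigma_2$-invariant. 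What your variant buys is exactly what you observe at the end: since you never meet the $\mu_2$-stabilisers, no hypothesis on the characteristic is needed (one only uses $\{\BB\Sigma_2\}=1$ over arbitrary fields), which is consistent with Theorem~B, stated for arbitrary $k$ and specialising for $n=2$ to $\{\BB\PMon_2\}=\LL^2/\{\PGL_2\}=\LL(\LL^2-1)^{-1}$; the paper's projective version is marginally shorter but genuinely uses that $2$ is invertible in $k$.
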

\begin{proof}
To see this, consider the following action of $G$ on $\PP^1$. The subgroup
$\GGm$ acts by multiplication on the first homogeneous coordinate and by
multiplication with the inverse on the second. The subgroup $\Sigma_2$ acts by
permuting the homogeneous coordinates. This action obviously comes from a linear
action, so we may apply Proposition~\ref{gro-rep} and get
$\{\BB G\} = \{[\PP^1/G]\}(\LL + 1)^{-1}$.

The $G$-space $\PP^1$ has two orbits. A closed orbit containing the point 
$(1 \hcolon 0)$ and an open orbit containing $(1 \hcolon 1)$. The stabilisers
of these points are $\GG_m$ and $\mu_2 \times \Sigma_2$ respectively.
Stratifying the stack $[\PP^1/G]$ according to these orbits gives the relation 
$\{[\PP^1/G]\}  =  \{\BB \GGm\} + \{\BB (\mu_2 \times \Sigma_2)\}$.
Since $\GGm$ is special, we have $\{\BB \GGm\} = (\LL-1)^{-1}$. Furthermore
$\mu_2 \times \Sigma_2 = \mu_2 \times \mu_2$ under our assumptions on the base
field, so $\BB (\mu_2 \times \Sigma_2)$ is isomorphic to
$\BB \mu_2 \times \BB \mu_2$ which has class~1 according to
Proposition~\ref{gro-units}. Combining these relations gives the desired
result.
\end{proof}

\begin{prop}
\label{gro-class-u2}
Consider the subgroup $G = \mu_n \rtimes \Sigma_2$ of the group
$\GGm \rtimes \Sigma_2$ from the previous proposition. Assume that 2 is
invertible in the field $k$ and that $4$ does not divide $n$.
Then $\{\BB G\} = 1$ in $\KK_0(\cStack_k)$.
\end{prop}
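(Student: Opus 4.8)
The plan is to imitate the proof of Proposition~\ref{gro-class-gm2}. The group $G = \mu_n \rtimes \Sigma_2$ acts linearly on $V = \AA^2$, with $\mu_n$ scaling the two coordinates by $\zeta$ and $\zeta^{-1}$ and $\Sigma_2$ swapping them, hence on $\PP^1 = \PP(V)$; so by Proposition~\ref{gro-rep} we have $\{\BB G\} = \{[\PP^1/G]\}\cdot\frac{\LL-1}{\LL^2-1} = \{[\PP^1/G]\}/(\LL+1)$, and it suffices to show $\{[\PP^1/G]\} = \LL+1$. Before stratifying, I would first dispose of the case $n \equiv 2 \pmod 4$: there $2$ is invertible and $n/2$ is odd, so the Chinese remainder theorem gives $\mu_n \cong \mu_2 \times \mu_{n/2}$, and since inversion is trivial on $\mu_2$ this yields $G \cong \mu_2 \times (\mu_{n/2}\rtimes\Sigma_2)$; then $\{\BB G\} = \{\BB\mu_2\}\cdot\{\BB(\mu_{n/2}\rtimes\Sigma_2)\}$, the first factor being $1$ by Proposition~\ref{gro-units} and the second an instance of the present proposition with $n/2$ odd. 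Thus one may assume $n$ is odd.

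Next I would stratify $\PP^1$ by the $G$-invariant pair of points $Z = \{(1\hcolon 0),(0\hcolon 1)\}$ and its open complement $U$. Since $\mu_n$ fixes each point of $Z$ while $\Sigma_2$ interchanges them, $Z \cong G/\mu_n$, so $[Z/G] \cong \BB\mu_n$, which has class $1$ by Proposition~\ref{gro-units}. On $U \cong \GGm$, writing $(1\hcolon t)$ for the point $t$, the subgroup $\mu_n$ acts by $t \mapsto \zeta^{-2}t$ and $\Sigma_2$ by $t \mapsto t^{-1}$; since $n$ is odd, $\zeta \mapsto \zeta^{-2}$ is an automorphism of $\mu_n$, so $\mu_n$ acts by all multiplications by $n$-th roots of unity and the Kummer map $\GGm \xrightarrow{t \mapsto t^n} \GGm$ identifies $[U/\mu_n]$ with $\GGm$. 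The quotient $\Sigma_2 = G/\mu_n$ then acts on this $\GGm$ (coordinate $u = t^n$) by $u \mapsto u^{-1}$, so $[U/G] \cong [\GGm/\Sigma_2]$.

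It then remains to compute $\{[\GGm/\Sigma_2]\}$ for the inversion action, which I would do by splitting off the fixed locus $\{u^2 = 1\} = \mu_2$; this consists of two rational points since $2$ is invertible, and $\Sigma_2$ acts trivially on it, so it contributes $\{\mu_2\}\cdot\{\BB\Sigma_2\} = 2$, using $\{\BB\Sigma_2\} = 1$ from Proposition~\ref{gro-bsn}. On the complement the involution is free, so $[(\GGm \setminus \mu_2)/\Sigma_2]$ is the scheme $(\GGm\setminus\mu_2)/\Sigma_2$, which the invariant $u + u^{-1}$ identifies with $\AA^1 \setminus \{2, -2\}$ (the images of $u = \pm 1$), of class $\LL - 2$. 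Hence $\{[U/G]\} = 2 + (\LL - 2) = \LL$, so $\{[\PP^1/G]\} = 1 + \LL$ and $\{\BB G\} = (1+\LL)/(\LL+1) = 1$ in $\KK_0(\cStack_k)$.

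The main point requiring care is the last quotient: one must resist computing $\{(\GGm\setminus\mu_2)/\Sigma_2\}$ by halving $\{\GGm\setminus\mu_2\} = \LL - 3$, since a free $\Sigma_2$-quotient is an étale double cover of the quotient but need not be a trivial one — indeed here $\LL-3$ is not twice $\LL-2$ — so the explicit model $\AA^1 \setminus \{2,-2\}$ via $u + u^{-1}$ is essential. This is also exactly where the hypothesis $4 \nmid n$ enters: if $4 \mid n$, the subgroup of $\mu_n$ acting trivially on $U$ is $\mu_2$ inside the non-split extension $1 \to \mu_2 \to \mu_n \to \mu_{n/2} \to 1$, the reduction to $n$ odd fails, and with it the clean identification $[U/\mu_n] \cong \GGm$. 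All remaining steps are routine applications of the scissors relation and the results of Section~\ref{gro}.
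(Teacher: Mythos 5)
Your proof is correct and follows essentially the same route as the paper's: the same reduction to odd $n$ via the splitting $G \simeq \mu_2 \times (\mu_{n/2}\rtimes\Sigma_2)$, the same use of Proposition~\ref{gro-rep} on $\PP^1$, the same stratification into the closed orbit $\{0,\infty\}$ (giving $\BB\mu_n$) and the open part, and the same identification $[U/G] \cong [\GGm/\Sigma_2]$ with class $2\{\BB\Sigma_2\} + (\LL-2) = \LL$. Your added details (the Kummer map $t\mapsto t^n$, the invariant $u+u^{-1}$, and the caution against halving classes of free quotients) only make explicit what the paper leaves implicit.
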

\begin{proof}
If $n = 2q$, with $q$ odd, then we have an isomorphism
$G \simeq \mu_2 \times (\mu_q \rtimes \Sigma_2)$ which
gives $\BB G \simeq \BB \mu_2 \times \BB (\mu_q \rtimes \Sigma_2)$. This reduces
the problem to the case when $n$ is odd.

By using the same representation of $G$ on $\PP^1$ as in the proof of
the previous proposition, we get the relation $\{\BB G\} =
\{[\PP^1/G]\}(\LL + 1)^{-1}$. As before, we may also isolate the closed orbit
$\{0, \infty\}$ to get the relation $\{[\PP^1/G]\}  = 
\{\BB \mu_n\} + \{[U/G]\} = 1 + \{[U/G]\}$, where $U$ denotes the complement.
Since the subgroup $\mu_n$ acts freely on $U$, we have an isomorphism $[U/G] \cong
[(U/\mu_n)/\Sigma_2]$. Here $U/\mu_n \cong \GG_m$ on which $\Sigma_2$
acts as the automorphism group of $\GG_m$. This action has two fixed points,
namely $\pm 1$. The quotient $\GGm/\Sigma_2$ is isomorphic to $\AA^1$.
Therefore, the quotient $(\GGm \setminus \{\pm1\})/\Sigma_2$ is isomorphic to
$\AA^1$ minus two points. Hence the usual stratification argument gives
$\{[\GGm/\Sigma_2]\} = 2\{\BB \Sigma_2\} + \LL-2 = \LL$. The result follows by
combining the relations.
\end{proof}

\section{Universal tori}
\label{tori}
In this section, we investigate certain quasi-split tori over stacks. We will
also look at a larger class of tori, which we call stably rational in analogy
with the situation where we work over a field.
\begin{definition}
Let $G$ be a finite group and $E$ a finite $G$-set. The torus
$$
\Res_{[E/G]/\BB G}\GGm
$$
obtained as the Weil restriction of $\GGm$ along the natural map $[E/G] \to \BB
G$ is called the \term{universal} quasi-split torus associated to the $G$-set $E$.
In the special case when $E$ is the $\Sigma_n$-set $[n]$, which makes $[E/G]$
isomorphic to $\BB \Sigma_{n-1}$, we simply call the associated torus
$\Res_{\BB \Sigma_{n-1}/\BB \Sigma_n}\GGm$ the universal quasi-split torus of rank $n$.
\end{definition}
\noindent
The universal quasi-split torus of rank $n$ can also be described as the
stack-quotient $[\GGm^n/\Sigma_n]$, where $\Sigma_n$ acts on $\GGm^n$ by permuting the factors.
It is universal in the sense that any rank $n$ quasi-split torus can be obtained
from it via base change.
Indeed, for any finite, étale morphism $S' \to S$ of degree $n$,
we may consider the configuration space $\Conf^n S'/S$.
This is, by definition, the $n$-fold fibre-product of $S'$ over $S$
with the big diagonal removed.
The space $\Conf^n S'/S$ is a $\Sigma_n$-torsor over $S$,
and therefore corresponds to a morphism $S \to \BB \Sigma_n$
to the classifying stack for $\Sigma_n$.
The torus $\Res_{S'/S}\GGm$ is isomorphic to the pull-back of
of the rank $n$ universal quasi-split
torus $\Res_{\BB\Sigma_{n-1}/\BB\Sigma_{n}}\GGm$ along this morphism.

A torus $T$ over a field is called \term{stably rational} if there exists a split
torus $\GGm^n$ such that $T \times \GGm^n$ is a rational variety. Due to a
theorem by Voskresenski{\u\i} \cite{voskresenskii1973}, a torus over a field is
stably rational if and only if it fits into an exact sequence of tori
$$
1 \to T_1 \to T_2 \to T \to 1
$$
with $T_1$ and $T_2$ quasi-split. This motivates the following definition.
\begin{definition}
A torus $T$ over an arbitrary base $S$ is called \term{stably rational} if it
fits into an exact sequence of tori
$$
1 \to T_1 \to T_2 \to T \to 1
$$
with $T_1$ and $T_2$ quasi-split over $S$.
\end{definition}
Since quasi-split tori are special, we can easily express the classes of their classifying
stacks in terms of the classes of the tori themselves.
\begin{prop}
\label{mon-class}
Let $T$ be a stably rational torus over a base $S$, which might be an algebraic stack,
fitting into the exact sequence
$$
1 \to T_1 \to T_2 \to T \to 1,
$$
with $T_1$ and $T_2$ quasi-split. Then $\{T\} = \{T_2\}/\{T_1\}$ and
$\{\BB T^\circ\} = \{T_1\}/\{T_2\}$ in the ring $\KK_0(\cStack_S)$.
Here $T^\circ$ denotes the torus with character lattice dual to that of $T$.
\end{prop}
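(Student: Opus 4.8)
The plan is to deduce each of the two identities from a short exact sequence of tori, using repeatedly that quasi-split tori are special.

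\emph{The relation $\{T\} = \{T_2\}/\{T_1\}$.} The given sequence $1 \to T_1 \to T_2 \to T \to 1$ exhibits $T_2 \to T$ as a $T_1$-torsor for the fppf topology, with $T_1$ acting by translation. Since $T_1$ is quasi-split it is special, so Corollary~\ref{gro-st-mul} applies and gives $\{T_2\} = \{T_1\}\{T\}$ in $\KK_0(\cStack_S)$; the same corollary, applied to $\ast \to \BB_S T_1$, shows that $\{T_1\}$ is invertible. Dividing yields the first claim.

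\emph{The relation $\{\BB T^\circ\} = \{T_1\}/\{T_2\}$.} Here the idea is to dualize, so that $T^\circ$ becomes a \emph{sub}torus with special quotient, and then apply Proposition~\ref{gro-exact}. Applying the character functor to the given sequence produces a short exact sequence $0 \to X^*(T) \to X^*(T_2) \to X^*(T_1) \to 0$ of lattices with continuous $\pi_1^{\mathrm{et}}(S)$-action; applying $\Hom_\ZZ(-,\ZZ)$ — which is exact, as every term is a finitely generated free abelian group — and using the anti-equivalence between our tori over $S$ and such lattices, we obtain an exact sequence of tori
$$
1 \to T^\circ \to T_2^\circ \to T_1^\circ \to 1 .
$$
A permutation lattice is self-dual as a $\pi_1^{\mathrm{et}}(S)$-module, so $T_1^\circ$ and $T_2^\circ$ are again quasi-split; in fact $T_i^\circ \cong T_i$, whence $\{T_i^\circ\} = \{T_i\}$. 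Since $T_2^\circ$ is quasi-split and therefore special, Proposition~\ref{gro-exact} applied to the displayed sequence gives $\{\BB_S T^\circ\} = \{T_1^\circ\}/\{T_2^\circ\} = \{T_1\}/\{T_2\}$. (As a sanity check, $\{T\}\cdot\{\BB T^\circ\} = 1$.)

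The bookkeeping — that $T_2 \to T$ is an fppf $T_1$-torsor, and that $\{T_i^\circ\} = \{T_i\}$ — is routine. The only step calling for care, and the main (mild) obstacle, is the dualization: one must know that the character functor is an exact contravariant equivalence between the isotrivial finite-type tori over $S$ and lattices with continuous $\pi_1^{\mathrm{et}}(S)$-action, so that exactness of the dualized lattice sequence transports back to exactness of the dual sequence of tori, together with the self-duality of permutation lattices. Both facts are standard (see \cite{SGA3III}, exp.~VIII--X), so I would cite them rather than reprove them.
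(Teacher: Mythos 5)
Your proof is correct and follows essentially the same route as the paper: the first identity via Corollary~\ref{gro-st-mul} applied to the $T_1$-torsor $T_2 \to T$, and the second by dualising the sequence, invoking self-duality of quasi-split tori, and applying Proposition~\ref{gro-exact} to the special torus $T_2^\circ$. The extra detail you supply on the character-lattice duality is just an expansion of the paper's one-line justification.
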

\begin{proof}
The first statement follows by Corollary~\ref{gro-st-mul} since $T_2 \to T$ is a $T_1$-torsor and $T_1$
is special. The second statement follows by dualising the sequence and applying Proposition~\ref{gro-exact},
using the fact that quasi-split tori are self dual.
\end{proof}

It is possible to compute the class of a quasi-split torus in the Grothendieck ring of
algebraic spaces explicitly. This has been done by in \cite{rokeaus2007} for
tori over fields and in \cite{bergh2014} in the relative setting. The class
of a quasi-split torus may be expressed in terms
of classes in the \term{Burnside ring}.
A standard reference regarding Burnside rings,
the ring of super central functions and their lambda ring structures is
\cite{knutson1973}.
We briefly recall the theory we need below.

Let $G$ be a finite group. The \term{Burnside ring} $\Bu{G}$ is the Grothendieck
ring associated to the category of finite $G$-sets. The ring $\Bu{G}$ has a
natural structure of pre-lambda ring induced by symmetric powers as follows. The
class of the $i$-th symmetric power of a $G$-set $E$ in $\Bu{G}$ is denoted by
$\sigma^i(E)$, and we define the power series $\sigma_s(E) = \sum_{i \geq 0}
\sigma^i(E)s^i$. The operation $\sigma_s$ extends to a pre-lambda ring structure
on $\Bu{G}$. The dual operator is denoted by $\lambda_s$ and is defined by the
relation $\lambda_{-s}\sigma_s = 1$.

Let $k$ be a field and $G$ be a finite group. Then there is a natural
ring homomorphism $\Bu{G} \to \KK(\cAS_{\BB G})$ from the Burnside ring
to the relative Grothendieck group over the classifying stack $\BB_k G$.
The homomorphism takes the class of a $G$-set $E$ to the class of the stack
quotient $[E/G] \to \BB G$. We will also consider the extension
$\Bu{G}[\ell] \to \KK(\cAS_{\BB G})$ of the map
to the polynomial ring $\Bu{G}[\ell]$ where $\ell$ maps to the
class of $\LL_{\BB G} \to \BB G$. We state the theorem about the explicit
expression for the class of a quasi-split torus in this setting.

\begin{prop}
\label{prop-class-qs-torus}
Let $G$ be a finite group and $E$ a finite $G$-set with $n$ elements. Then the
class of the universal quasi-split torus $\Res_{[E/G]/\BB G}\GGm$ in
$\KK(\cAS_{\BB G})$ is the image of the element
$$
(\ell - 1)^E := \sum_{i = 0}^{n} (-1)^i\lambda^i(E)\ell^{n-i}
$$
in $\Bu{G}[\ell]$ under the ring homomorphism $A(G)[\ell] \to
\KK(\cAS_{\BB G})$ described in the paragraph preceding the proposition.
\end{prop}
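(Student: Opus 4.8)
The plan is to realise the torus inside a Weil-restricted affine space and stratify by ``support''. One has $\Res_{[E/G]/\BB G}\GGm \cong [\GGm^E/G]$, an open substack of $V_E := \Res_{[E/G]/\BB G}\AA^1 \cong [\AA^E/G]$ whose complement is the union of the coordinate hyperplanes $\{x_e = 0\}$, $e \in E$. Since $V_E$ is a vector bundle of rank $n$ over $\BB G$ (the permutation representation on $k^E$), its class in $\KK(\cAS_{\BB G})$ is $\LL^n$ by Axiom GS2. Stratifying $V_E$ by the support $\{e : x_e \neq 0\}$ of a point yields locally closed substacks $W_0, \dots, W_n$ with $W_i = \bigl[(\coprod_{|S|=i}\GGm^S)/G\bigr]$, where $\GGm^S \subseteq \AA^E$ is the locus of support exactly $S$; in particular $W_n = \Res_{[E/G]/\BB G}\GGm$, and the scissors relation gives $\sum_{i=0}^n \{W_i\} = \LL^n$.

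I would then argue by induction on $n = |E|$, uniformly in $G$ (the case $n=0$ being trivial). For $i < n$ the stratum splits as $W_i = \coprod_{[S]}[\GGm^S/G_S]$ over representatives of the $G$-orbits on the $G$-set $\binom{E}{i}$ of $i$-element subsets, where $G_S$ is the stabiliser of $S$; each summand is the universal quasi-split torus $\Res_{[S/G_S]/\BB G_S}\GGm$ of rank $i<n$, so by the inductive hypothesis its class in $\KK(\cAS_{\BB G_S})$ is the image of $(\ell-1)^S$. Pushing forward along the representable finite morphism $\BB G_S \to \BB G$ corresponds on Burnside rings to the induction map $\mathrm{Ind}_{G_S}^{G}$, because $[Y/G_S]$ regarded over $\BB G$ equals $[(G\times^{G_S}Y)/G]$; hence $\{W_i\}$ is the image of
$$
(\ell-1)^{\langle i\rangle} \;:=\; \sum_{[S]\in\binom{E}{i}/G}\mathrm{Ind}_{G_S}^{G}\bigl((\ell-1)^S\bigr) \;\in\; \Bu{G}[\ell].
$$
As the only $n$-element subset of $E$ is $E$ itself (with stabiliser $G$), one has $(\ell-1)^{\langle n\rangle} = (\ell-1)^E$; so, combined with $\sum_i\{W_i\} = \LL^n$, the proposition follows once one proves the identity $\sum_{i=0}^n(\ell-1)^{\langle i\rangle} = \ell^n$ in $\Bu{G}[\ell]$, for then $\{W_n\}$ is the image of $\ell^n - \sum_{i<n}(\ell-1)^{\langle i\rangle} = (\ell-1)^E$.

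To prove that identity I would apply the mark homomorphisms $X \mapsto \#X^K$, one for each conjugacy class of subgroups $K \le G$, which are jointly injective on $\Bu{G}$ and hence, after extending $\ell$-linearly, on $\Bu{G}[\ell]$. A short orbit count gives $\#\bigl((\ell-1)^F\bigr)^{K'} = \prod_{O\in F/K'}(\ell^{|O|}-1)$ for any finite group with subgroup $K'$ and $K'$-set $F$. Plugging this together with the Mackey formula for $\#(\mathrm{Ind}_{G_S}^{G}Y)^K$ into the definition of $(\ell-1)^{\langle i\rangle}$, and reindexing the resulting double sum by the $K$-stable subsets of $E$ — equivalently, by subsets $T$ of the orbit set $E/K$ — collapses $\sum_i\#\bigl((\ell-1)^{\langle i\rangle}\bigr)^K$ to $\sum_{T\subseteq E/K}\prod_{O\in T}(\ell^{|O|}-1) = \prod_{O\in E/K}\ell^{|O|} = \ell^{n}$, which equals $\#(\ell^n)^K$. (The closed form $(\ell-1)^E = \ell^n\lambda_{-1/\ell}(E)$ makes this identity transparently the relation $\lambda_{-s}\sigma_s=1$; pulling everything back along the configuration space of a finite étale cover recovers the formulas of Rökaeus \cite{rokeaus2007} and of \cite{bergh2014}.)

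The delicate point, which I expect to cost most of the effort, is the equivariant bookkeeping in the inductive step: naive inclusion–exclusion over the hyperplanes $\{x_e = 0\}$ fails after passing to the quotient stack — already for $G = \Sigma_2$ and $E = \{1,2\}$ the locus counted twice acquires a nontrivial automorphism group — so one genuinely has to work in the Burnside ring, establish that pushforward along $\BB G_S \to \BB G$ realises $\mathrm{Ind}_{G_S}^{G}$, and carry out the Mackey computation; once these are in place the remainder is formal.
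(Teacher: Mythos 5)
Your proposal is correct, but there is nothing in the paper to compare it against: the paper does not prove this proposition, it imports it from Rökaeus \cite{rokeaus2007} (field case) and from the relative reference \cite{bergh2014}, so your write-up is a reconstruction rather than an alternative to an internal argument. Your route --- identify $\Res_{[E/G]/\BB G}\GGm$ with $[\GGm^E/G]$ inside $[\AA^E/G]$, whose class is $\LL^n$ by \gsii{}, stratify by support, and induct on $|E|$ uniformly in the group, with the pushforward along the representable finite étale morphism $\BB G_S \to \BB G$ realising $\mathrm{Ind}_{G_S}^{G}$ because $[Y/G_S]$ viewed over $\BB G$ is $[(G\contr{G_S}Y)/G]$ and $\ell\mapsto\LL$ is respected via the projection formula --- is essentially the strategy of those references, and each step you flag as delicate does go through. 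In particular the key identity $\sum_{i=0}^{n}(\ell-1)^{\langle i\rangle}=\ell^{n}$ holds: after applying the $K$-mark homomorphism (injective by Burnside's theorem, hence also after adjoining $\ell$), the $K$-fixed cosets of $G/G_S$ correspond to the $K$-stable subsets in the $G$-orbit of $S$, so the double sum reindexes over subsets $T\subseteq E/K$ and collapses to $\sum_{T}\prod_{O\in T}(\ell^{|O|}-1)=\prod_{O\in E/K}\ell^{|O|}=\ell^{n}$, matching the $K$-mark of $\ell^{n}$. One economy worth noting: the ``short orbit count'' you invoke for the $K$-marks of $(\ell-1)^{F}$ is exactly the proposition that follows this one in the paper (marks $(\ell^{w_1}-1)\cdots(\ell^{w_r}-1)$), so you can quote it instead of reproving it; with that and the base case $E=\emptyset$ made explicit, your argument is complete.
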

\noindent
Here we treat $(\ell - 1)^E$ just as short hand notation for the right hand
side.
If $G$ acts trivially, we have the equality $(\ell - 1)^E = (\ell - 1)^{|E|}$,
which motivates the notation.

The original theorem regarding the class of the torus $L^\times$ for
a degree $n$ separable algebra extension $L/k$ is recovered by letting
$f\colon \Spec k \to \BB \Sigma_n$ be the map corresponding to the
$\BB\Sigma_n$-torsor $\Conf^n L/k$ and applying the ring homomorphism
$f^\ast\colon\K0(\cAS_{\BB\Sigma_n}) \to \K0(\cVar_k) $, which respects the
pre-lambda structure, to the class of the rank $n$ universal torus.

Computations in the Burnside ring can often be made very explicit with the
use of the ring $\SCF{G}$ of \term{super central functions} for $G$.
The elements of $\SCF{G}$ are integer valued class functions on the
set of subgroups of $G$. That is, functions which are constant on
conjugacy classes.
Addition and multiplication is defined elementwise.
Given a $G$-set $E$, and a subgroup $H \subset G$, the \term{$H$-mark} of $E$ is
defined as the number of fixed points of $E$ under the natural $H$-action on $E$
obtained by restriction.
Since the number of fixed points only depends on the conjugacy class of $H$,
taking marks for all subgroups of $G$ gives a super central function.
This induces a map $\Bu{G} \to \SCF{G}$,
which turns out to be a ring homomorphism.
By a classical result by Burnside~\cite[§180]{burnside1955},
this homomorphism is injective.
The pre-lambda ring structure on $\Bu{G}$ extends to $\SCF{G}$ in natural way.
We also get an injective function $\Bu{G}[\ell] \to \SCF{G}[\ell]$ of polynomials.
Using this, we get a more explicit description of the expression $(\ell - 1)^E$
in terms of marks.

\begin{prop}
Let $G$ be a finite group and $E$ a finite $G$-set. Let $H \subset G$ be a
subgroup and let $w_1, \ldots, w_r$ be the length of the orbits of $E$ under the
natural $H$-action.
Then the $H$-mark of $(\ell-1)^E$ is given by the polynomial
$$
(\ell^{w_1} - 1)\cdots(\ell^{w_r} - 1)
$$
in $\ZZ[\ell]$.
\end{prop}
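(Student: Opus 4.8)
The plan is to use the injectivity of the mark homomorphism $\Bu{G} \to \SCF{G}$, which extends to an injection $\Bu{G}[\ell] \to \SCF{G}[\ell]$, so that it suffices to compute the $H$-mark of the element $(\ell - 1)^E$ in $\SCF{G}[\ell]$ for each subgroup $H \subset G$. Since the mark homomorphism is a morphism of pre-lambda rings, the $H$-mark of $\lambda^i(E)$ equals the $i$-th $\lambda$-operation applied to the $H$-mark of $E$ computed in $\ZZ[\ell]$ (where the pre-lambda structure is the usual one on the integers, since $H$-marks land in $\ZZ$). More precisely, restricting to $H$ decomposes $E$ as a disjoint union of $H$-orbits $E = E_1 \sqcup \cdots \sqcup E_r$ with $|E_j| = w_j$, and since $\lambda_s$ is multiplicative and an $H$-orbit of length $w$ has the same $H$-mark as the transitive $H$-set $H/H'$ with $[H:H']=w$, the whole computation factors through these orbits.

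So the first step is to reduce, via injectivity of the marks and multiplicativity of $\lambda_s$ under disjoint union, to the case where $E$ is a single transitive $H$-set of length $w$; then the claimed formula $(\ell^{w_1}-1)\cdots(\ell^{w_r}-1)$ becomes a product over the orbits and I only need to check that the $H$-mark of $(\ell-1)^E$ for transitive $E$ of length $w$ is $\ell^{w} - 1$. The second step is this single computation. Writing $m_j$ for the $H$-mark of $\lambda^j(E)$, the defining relation $\lambda_{-s}\sigma_s = 1$ together with $\sigma^i$ of a transitive length-$w$ set (the $i$-th symmetric power) lets one identify the generating function $\sum_j m_j s^j$. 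Concretely, for a single transitive orbit of size $w$, the symmetric powers have $H$-marks counting $H$-fixed multisets, and the alternating sum $\sum_{i=0}^{w} (-1)^i m_i \ell^{w-i}$ telescopes to $\ell^w - 1$; equivalently, one observes that a transitive $H$-set of size $w$, after a further reduction to the cyclic subgroup generated by a generator of the (cyclic) quotient acting, has $\lambda$-classes matching those of a single $w$-cycle, for which $(\ell-1)^{\text{(}w\text{-cycle)}} = \ell^w - 1$ is the standard identity (this is exactly the content of the remark that $(\ell-1)^E = (\ell-1)^{|E|}$ when $G$ acts trivially, applied after passing to the orbit).

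I expect the main obstacle to be the bookkeeping in the single-orbit computation: one must be careful that the pre-lambda structure on $\Bu{G}$ is defined via \emph{symmetric} powers while the element $(\ell-1)^E$ is written using the \emph{$\lambda$} (exterior) operations $\lambda^i$, so the relation $\lambda_{-s}\sigma_s = 1$ has to be invoked correctly, and one must verify that the mark homomorphism genuinely intertwines these operations with the classical ones on $\ZZ$. Once that is pinned down, the identity $\sum_{i=0}^{w}(-1)^i\binom{?}{?}\ell^{w-i} = \ell^w-1$ for a $w$-cycle is routine — indeed it is the characteristic-polynomial computation $\det(\ell\cdot\mathrm{id} - P) = \ell^w - 1$ for the permutation matrix $P$ of a $w$-cycle, since $\lambda^i$ of a cyclic permutation module produces, up to sign, the coefficients of that characteristic polynomial. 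Assembling the per-orbit answers by multiplicativity then gives the stated formula, and injectivity of the marks upgrades this from an equality of super central functions to the desired equality in $\Bu{G}[\ell]$.
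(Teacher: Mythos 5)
Your main line coincides with the paper's proof: reduce to $H=G$ by functoriality of the pre-lambda structure under restriction, split $E$ into $H$-orbits using multiplicativity of $\sigma_s$ (equivalently $\lambda_s$) on disjoint unions, and settle a single transitive orbit of size $w$ by computing the marks of the symmetric powers and inverting via $\lambda_{-s}\sigma_s=1$. That skeleton is right, and once the single-orbit computation is actually carried out it is exactly the argument in the paper. One small misreading: the appeal to injectivity of $\Bu{G}\to\SCF{G}$ plays no role here, since the proposition asserts a value of the mark homomorphism, not an identity in $\Bu{G}[\ell]$; there is nothing to ``upgrade''.

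The genuine issue is that the step carrying all the content is either left as an assertion or justified by an argument that fails in general. The correct statement, which the paper proves and you only gesture at (``counting $H$-fixed multisets \dots\ telescopes''), is: for a transitive $H$-set $E$ of size $w$, an $H$-invariant multiset must have constant multiplicity, so the $H$-mark of $\sigma^i(E)$ is $1$ if $w\mid i$ and $0$ otherwise; hence the mark of $\sigma_s(E)$ is $1/(1-s^w)$, that of $\lambda_{-s}(E)$ is $1-s^w$, and that of $(\ell-1)^E$ is $\ell^w-1$. Your ``equivalent'' shortcut does not work as stated: a transitive $H$-set $H/H'$ has no cyclic quotient acting in general ($H'$ need not be normal, and even when $H/H'$ is a group it need not be cyclic), and the image of $H$ in $\Sigma_w$ need not contain any $w$-cycle --- take $E=H$ with the regular action of a non-cyclic $H$, where the answer is still $\ell^{|H|}-1$ although no element acts as an $|H|$-cycle. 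The characteristic-polynomial identification of the $\lambda^i$-marks with coefficients of $\det(\ell\cdot\mathrm{id}-P)$ is valid only for cyclic $H$ generated by an element acting with the given cycle type (there the marks of $\sigma^i$ agree with traces on symmetric powers by a Molien-type count); for general $H$ the $H$-mark of $\sigma^i$ counts simultaneous fixed points, not a trace, so the orbit-length computation above is genuinely needed. Likewise the paper's remark that $(\ell-1)^E=(\ell-1)^{|E|}$ for trivial actions cannot be invoked ``after passing to the orbit''. In short: same approach as the paper, but the transitive-orbit count must be done honestly, and the cyclic/$w$-cycle replacement you offer for it would fail for non-cyclic transitive actions.
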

\begin{proof}
\label{prop-qs-as-scf}
Since the pre-lambda ring structure is functorial with respect to
restriction, it is enough to consider the case when $H = G$.
First we prove that the $G$-marks of $\sigma_s(E)$ is
given by the power series
$$
\frac{1}{(1 - s^{w_1})\cdots(1 - s^{w_r})}.
$$
By the relation $\sigma_s(E + F) = \sigma_s(E)\sigma_s(F)$ for $G$-sets $E$ and
$F$, it is enough to consider the case when $G$ acts transitively on $E$. Denote
the cardinality of $E$ by $w$.
An unordered tuple $(x_1, \ldots, x_n) \in \Symm_n(E)$ is a fixed point if and
only if each element of $E$ occurs in the tuple an equal number of times. Hence
we have one fixed point if $w | n$ and zero fixed points otherwise. This gives
us the $G$-mark $1/(1-s^w)$ as desired.

Since $\lambda_{-s}(E)\sigma_s(E) = 1$, it follows that
$$
\sum_{i = 0}^{|E|} (-1)^i\lambda^i(E)s^i = (1 - s^{w_1})\cdots(1 - s^{w_r}).
$$
Substituting $\ell$ for $s^{-1}$ and multiplying with $\ell^{|E|}$ gives the
desired identity.
\end{proof}

Since we also want to use the Burnside ring to compute the classes of stably rational tori
in $\KK_0(\cStack_{\BB G})$, we need to invert the classes of the form $(\ell -
1)^E$. It turns out that it is enough to invert the set $\Phi \subset
\Bu{G}[\ell]$ of cyclotomic polynomials in the variable $\ell$.

\begin{prop}
\label{mon-inverse}
Let $G$ be a finite group and $E$ a finite $G$-set. Then the class
$$
(\ell - 1)^E := \sum_{i = 0}^n (-1)^i\lambda^i(E)\ell^{n-i}
$$
in $\Bu{G}[\ell]$ is invertible in $\Phi^{-1}\Bu{G}[\ell]$.
\end{prop}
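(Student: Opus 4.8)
The plan is to deduce the invertibility of $(\ell-1)^E$ from the fact, recorded in the preceding proposition, that each of its marks is a product of factors of the form $\ell^{w}-1$, together with the elementary observation that $\ell^{w}-1 = \prod_{d\mid w}\Phi_d(\ell)$ is a product of cyclotomic polynomials and hence a unit in $\Phi^{-1}\ZZ[\ell]$. The only genuine subtlety is that the marks homomorphism $\Bu{G}\to\SCF{G}$, although injective by Burnside's theorem, has a nonzero (finite) cokernel, so the fact that every mark of $(\ell-1)^E$ becomes a unit in $\Phi^{-1}\ZZ[\ell]$ does not formally imply that $(\ell-1)^E$ is a unit inside the smaller ring $\Phi^{-1}\Bu{G}[\ell]$. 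I would bridge this gap with a norm (determinant) argument.

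First I would record the elementary lemma: if a commutative ring $A$ is free of finite rank as a module over a subring $R$, then $x\in A$ is a unit if and only if the determinant of the multiplication map $m_x\colon A\to A$ lies in $R^\times$. Indeed, if $x$ is a unit then $m_x$ is an $R$-linear automorphism, so $\det m_x\in R^\times$; conversely, if $\det m_x\in R^\times$ then $m_x$ is invertible as an $R$-module map by Cramer's rule, and $m_x^{-1}(1)$ is then a two-sided inverse of $x$ in $A$. I would apply this with $A=\Phi^{-1}\Bu{G}[\ell]$ and $R=\Phi^{-1}\ZZ[\ell]$: since $\Bu{G}$ is a free $\ZZ$-module with basis the classes of transitive $G$-sets, the ring $\Bu{G}[\ell]$ is free over $\ZZ[\ell]$, and localising preserves this, so the hypothesis of the lemma holds. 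It thus suffices to show that $\det m_{(\ell-1)^E}\in\ZZ[\ell]$ is, up to sign, a product of cyclotomic polynomials.

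To compute this determinant I would base change along $\ZZ[\ell]\to\QQ(\ell)$, which leaves it unchanged. By Burnside's theorem the marks homomorphism is injective, and since both $\Bu{G}$ and $\SCF{G}$ are free $\ZZ$-modules of rank equal to the number of conjugacy classes of subgroups of $G$, it becomes a ring isomorphism after $\otimes_{\ZZ}\QQ$. Hence $\Bu{G}[\ell]\otimes_{\ZZ[\ell]}\QQ(\ell)\cong\prod_{(H)}\QQ(\ell)$, a product over the conjugacy classes of subgroups with componentwise multiplication, and under this isomorphism $m_{(\ell-1)^E}$ is diagonal with the $H$-marks of $(\ell-1)^E$ as diagonal entries. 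Therefore $\det m_{(\ell-1)^E}$ is the product over all conjugacy classes $(H)$ of the $H$-marks, which by the preceding proposition is a product of factors $\ell^{w}-1$; each such factor is a product of cyclotomic polynomials, so the determinant is a unit in $\Phi^{-1}\ZZ[\ell]$, and the lemma concludes the proof.

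The step I expect to demand the most care is the one flagged at the outset: transporting invertibility from the coordinatewise ring $\SCF{G}[\ell]\cong\prod_{(H)}\ZZ[\ell]$, where it is immediate, down into the proper subring $\Bu{G}[\ell]$, since the finite discrepancy between the two is not destroyed merely by inverting $\Phi$. The determinant argument handles this cleanly; an alternative would be to describe the maximal ideals of $\Phi^{-1}\Bu{G}[\ell]$ via Dress's description of $\Spec\Bu{G}$ and verify that none of them contains $(\ell-1)^E$, but the norm computation is shorter and more self-contained.
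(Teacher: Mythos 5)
Your argument is correct, but it takes a genuinely different route from the paper. You transport invertibility from $\SCF{G}[\ell]$ down to $\Bu{G}[\ell]$ by a norm argument: $\Bu{G}[\ell]$ is free of finite rank over $\ZZ[\ell]$, after base change to $\QQ(\ell)$ the marks homomorphism splits the algebra as a product of copies of $\QQ(\ell)$ indexed by conjugacy classes of subgroups, multiplication by $(\ell-1)^E$ becomes diagonal with the $H$-marks as entries, so its determinant is the product of the marks $(\ell^{w_1}-1)\cdots(\ell^{w_r}-1)$, a product of cyclotomic polynomials and hence a unit in $\Phi^{-1}\ZZ[\ell]$; Cramer's rule then produces the inverse. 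The paper instead constructs an inverse explicitly: it takes $h$ to be the least common multiple of the marks $f_H$ (again a product of cyclotomic polynomials), defines $g \in \SCF{G}[\ell]$ by $g_H = h/f_H$, and verifies that $g$ actually comes from $\Bu{G}[\ell]$ by solving the convolution equations $c_i = \sum_{j+k=i} a_j b_k$ recursively, using that the constant coefficient $a_0 = (-1)^n\lambda^n(E)$ of $(\ell-1)^E$ is a unit in $\Bu{G}$ (all its marks are $\pm 1$, so $a_0^2 = 1$ by injectivity of the marks map). Both proofs rest on the same two inputs, the mark formula of the preceding proposition and Burnside's injectivity theorem, and both correctly address the point you flag, namely that the marks map is not surjective so coordinatewise invertibility is not enough by itself. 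Your version is more self-contained and avoids any unit verification inside $\Bu{G}$ (a step the paper in fact asserts without justification), at the cost of inverting the full product of all the marks rather than just their least common multiple; the paper's version yields a more economical denominator and an explicit numerator $g$, at the cost of that extra check on $a_0$ and of the slight abuse of calling $h$ ``a cyclotomic polynomial'' when it is really a product of them.
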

\begin{proof}
It is enough to prove that given an element
$(\ell - 1)^S = f = a_0 + a_1\ell + \cdots + a_s\ell^s$,
there exists an element $g = b_0 + b_1\ell + \cdots + b_r\ell^r$
in $\Bu{G}[\ell]$ and a cyclotomic polynomial
$h = c_0 + c_1\ell + \cdots + c_{r+s}\ell^{r+s}$ such that $fg = h$.
We first consider the elements $f$, $g$ and $h$ as elements in $\SCF{G}[\ell]$.
Due to the  explicit description in Proposition~\ref{prop-qs-as-scf}, the
polynomial $f_H \in \ZZ[\ell]$ is cyclotomic for all subgroups $H \subset G$.
Let $h$ be the least common multiple of all $f_H$ where
we let $H$ range over a set of representatives for the conjugacy classes of the subgroups of $G$.
Now choose $g$ such that $g_H = h/f_h$ for all subsets $H$ of $G$. We need to prove that $g$ actually
comes from an element of $\Bu{G}[\ell]$. But this follows from the relations
$c_i = \sum_{i = j + k}a_jb_k$. Indeed, since $a_0$ is invertible in $\Bu{G}$, we can solve for
$b_i$, and thus express $b_i$ in terms of elements which lie in $\Bu{G}$ by induction on $i$.
\end{proof}

\section{Groups of monomial matrices}
A monomial matrix is a square matrix with exactly one non-zero element
in each row and each column. Fix a positive integer $n$.
We denote the subgroup of monomial matrices in $\GL_n$ by $\Mon_n$.
By taking the quotient by the scalar matrices, we get a corresponding subgroup
$\PMon_n$ of $\PGL_n$. The groups $\PMon_n$ and $\Mon_n$ are the normalisers of
the groups of diagonal matrices in $\PGL_n$ and $\GL_n$ respectively.

\begin{prop}
\label{monomial-class}
Let $\Mon_n$ be the subgroup of monomial matrices in $\GL_n$ and let $X_n$ be
the space $\GL_n/\Mon_n$. Then $\{X_n\} = \LL^{n(n-1)}$ and $\{\BB \Mon_n\} =
\{X_n\}/\{\GL_n\}$ in $\K0(\cStack_k)$ for an arbitrary field $k$.
\end{prop}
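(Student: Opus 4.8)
The plan is to dispatch the second identity by a general principle about special groups, and then compute $\{X_n\}$ by setting up and solving a recursion for $\{\BB\Mon_n\}$.

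For the identity $\{\BB\Mon_n\} = \{X_n\}/\{\GL_n\}$, I would exploit that $\GL_n$ is special. The inclusion $\Mon_n \hookrightarrow \GL_n$ induces a morphism $\BB\Mon_n \to \BB\GL_n$, and the homogeneous space $X_n = \GL_n/\Mon_n$ is its pullback along $\ast \to \BB\GL_n$: an $S$-point of $\BB\Mon_n \times_{\BB\GL_n} \ast$ is precisely an $\Mon_n$-reduction of the trivial $\GL_n$-torsor on $S$, i.e.\ a map $S \to \GL_n/\Mon_n$. Hence $X_n \to \BB\Mon_n$ is a $\GL_n$-torsor, and Corollary~\ref{gro-st-mul} gives $\{X_n\} = \{\GL_n\}\{\BB\Mon_n\}$. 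Since $\{\GL_n\}$ is invertible by Proposition~\ref{gro-bgln}, this yields the stated relation, and it remains to compute either class.

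I would compute $\{\BB\Mon_n\}$ by applying Proposition~\ref{gro-rep} to the standard representation of $\Mon_n$ on $V = k^n$, giving $\{\BB\Mon_n\} = \LL^{-n}\{[k^n/\Mon_n]\}$, and then stratifying $k^n$ by the subsets $O_j$ of vectors with exactly $j$ nonzero coordinates for $0 \le j \le n$. Each $O_j$ is locally closed and $\Mon_n$-invariant, and is a single $\Mon_n$-orbit; the stabiliser of $e_1 + \cdots + e_j$ consists of the monomial matrices that are block diagonal with a permutation matrix in the top-left $j \times j$ block and an arbitrary monomial matrix in the bottom-right $(n-j) \times (n-j)$ block, i.e.\ it is $\Sigma_j \times \Mon_{n-j}$. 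Thus $[O_j/\Mon_n] \simeq \BB\Sigma_j \times \BB\Mon_{n-j}$, and using $\{\BB\Sigma_j\} = 1$ (Proposition~\ref{gro-bsn}) together with the scissor relation we get $\LL^n \{\BB\Mon_n\} = \sum_{m=0}^{n} \{\BB\Mon_m\}$. Subtracting the corresponding identity for $n-1$ gives $(\LL^n - 1)\{\BB\Mon_n\} = \LL^{n-1}\{\BB\Mon_{n-1}\}$, so, starting from $\{\BB\Mon_0\} = 1$, induction yields $\{\BB\Mon_n\} = \LL^{n(n-1)/2}\big/\prod_{k=1}^{n}(\LL^k - 1)$. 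Since $\{\GL_n\} = \LL^{n(n-1)/2}\prod_{k=1}^{n}(\LL^k - 1)$ by Proposition~\ref{gro-bgln}, multiplying back gives $\{X_n\} = \LL^{n(n-1)}$.

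The step needing the most care is the orbit stratification: verifying that the $O_j$ are locally closed and $\Mon_n$-stable, that each is a single orbit, and that the stabiliser is exactly the semidirect-product subgroup $\Sigma_j \times \Mon_{n-j}$ and not something larger. Once that is in place, solving the telescoping recursion and comparing with the formula for $\{\GL_n\}$ is routine bookkeeping. (As a sanity check, for $n = 2$ one recovers $X_2 \cong \PP^2 \setminus C$ for a conic $C$, of class $\LL^2$.)
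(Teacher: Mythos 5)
Your argument is correct and matches the paper's proof in all essentials: the same torsor identification $X_n \to \BB\Mon_n$ making the two statements equivalent, the same stratification of $k^n$ by the number of nonzero coordinates with stabilisers $\Sigma_j \times \Mon_{n-j}$, and the same recursion solved via $\{\BB\Sigma_j\}=1$ and the formula for $\{\GL_n\}$. You merely spell out the telescoping solution of the recursion, which the paper leaves implicit.
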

\begin{proof}
Since the space $X_n$ is a $\GL_n$-torsor over $\BB \Mon_n$ the two statements
in the proposition are equivalent. Consider the usual linear action of $\Mon_n$
on $V = k^n$. The action has $n+1$ orbits $V_i$, $0 \leq i \leq n$, with $V_i$
being the subvariety of $V$ consisting of points with exactly $i$ of the coordinates vanishing.
This gives a stratification of the stack $[V/\Mon_n]$ in locally
closed substacks $[V_k/\Mon_n]$ for $0 \leq i \leq n$. Since the stabiliser
of a point in $V_i$ is isomorphic to $\Mon_i \times \Sigma_{n-i}$, we have
isomorphisms $[V_i/\Mon_n] \simeq \BB \Mon_i \times \BB \Sigma_{n-i}$.
By applying Proposition~\ref{gro-rep}, we get the recurrence relation
$$
\{\BB \Mon_n\}\LL^n = \{\BB N_0\}\{\BB \Sigma_n\}
+ \{\BB \Mon_1\}\{\BB \Sigma_{n-1}\} 
+ \cdots + \{\BB \Mon_n\}\{\BB \Sigma_0\}.
$$
By Proposition~\ref{gro-bsn}, we have $\{\BB \Sigma_i\} = 1$ for all $i$. 
Solving the equation above, using the recurrence relation
$\{\GL_{i+1}\} = (\LL^{i+1}-1) \LL^i\{\GL_{i}\}$
for the class of the general linear group, yields the desired expression
for $\{\BB\Mon_n\}$ in $\K0(\cStack_k)$.
\end{proof}

The quotient space $X_n = \GL_n/\Mon_n$ is the configuration space
of $n$ unordered points in general position in $\PP^{n-1}$. This space can
also be described as the quotient $\PGL_n/\PMon_n$, making it a $\PGL_n$-torsor over
$\BB \PMon_n$. Hence the expected class of $\BB \PMon_n$ in $\K0(\cStack_k)$ is
$\LL^{n(n-1)}/\{\PGL_n\}$. However, to determine whether this really is the actual
class, seems to be much harder than computing the class of $\{\BB \Mon_n\}$. For
the case $n = 2$, we have already done it in the previous section, since $\PMon_2$ is
isomorphic to $\GG_m \rtimes \Sigma_2$. In general, the problem is related to
determining the class of the classifying stack of a stably rational torus.

We first describe the problem from a slightly more general perspective.
Let $N$ be an algebraic group and $G$ a finite group acting on $N$ by group isomorphisms.
Then we have a split exact sequence $1 \to N \to N \rtimes G \to G \to 1$, which induces
a 2-cartesian square
\begin{center}
\begin{tikzpicture}[description/.style={fill=white,inner sep=2pt}]
\matrix (m) [matrix of math nodes, row sep=3em,
column sep=2.5em, text height=1.5ex, text depth=0.25ex]
{
\BB N & \BB N \rtimes G \\
\ast & \BB G. \\
};
\path[->,font=\scriptsize]
(m-1-1)
edge (m-2-1)
(m-1-2)
edge (m-2-2)
(m-1-1)
edge (m-1-2)
(m-2-1)
edge (m-2-2)
;
\end{tikzpicture}
\end{center}
of classifying stacks. From this, we see that the map $f\colon\BB N \rtimes G
\to \BB G$ is a gerbe, which is neutral since the exact sequence of groups is
split. In fact, the gerbe can be viewed as the classifying stack $\BB_{\BB G}[N/G]$ associated to the group
object $[N/G]$ over $\BB G$.

Applying this to the group $\Mon_n$ of monomial matrices in $\GL_n$, we see that
$\BB \Mon_n$ is isomorphic to the classifying stack of
$\Res_{\BB\Sigma_{n-1}/\BB\Sigma_n}\GGm$ over $\BB\Sigma_n$.
Similarly, the classifying stack for the group $\PMon_n$ is
isomorphic to the classifying stack of the quotient
$(\Res_{\BB\Sigma_{n-1}/\BB\Sigma_n}\GGm)/\GG_m$ over $\BB \Sigma_n$. 
This allows us to use the methods from the previous section to compute class
of $\BB \PMon_n$ for all $n$ for which the dual of the group
$(\Res_{\BB\Sigma_{n-1}/\BB\Sigma_n}\GGm)/\GG_m$ is stably rational.

\begin{proof}[%
\ifarxiv%
Proof %
\fi%
of Theorem~\ref{mainBPN}]
Assume that $n = 2$ or $n= 3$. According to the discussion above, the stack $\BB
\PMon_n$ is the classifying stack of a $(n-1)$-dimensional torus over
$\BB\Sigma_n$. The dual of this torus is stably rational, since all tori of
dimension less or equal than 2 are stably rational
\cite[§4.9]{voskresenskii1997}.
Now we apply Proposition~\ref{mon-class}, \ref{prop-class-qs-torus}
and~\ref{mon-inverse}. It follows that its classifying stack has a class of
the form $f/g$ in $\KK_0(\cStack_{\BB_k\Sigma_n})$, where $f$
is a polynomial in $\LL$ with coefficients in the image of $\Bu{\Sigma_n}$, and
$g$ is a product of cyclotomic polynomials. Since $1/g$ is defined
in $\KK_0(\cStack_k)$, the push forward of
the class $f/g$ along $\BB_k\Sigma_n \to \Spec k$ can also be described as $f/g$
in $\KK_0(\cStack_k)$ by the projection formula. By the assumption on $n$,
we have $\{\BB H\} = 1$ in $\KK_0(\cStack_k)$ for each subgroup $H \subset
\Sigma_n$ according to Proposition~\ref{gro-bsn} and~\ref{gro-bcyclic}.
Hence $f$ has integer coefficients in $\KK_0(\cStack_k)$, so the quotient $f/g$
is rational in $\LL$. As discussed in Section~\ref{gro-expect}, it follows that
the class must coincide with the expected class, which we have seen is
$\LL^{n(n-1)}/\{\PGL_n\}$, and Theorem~\ref{mainBPN} follows.
\end{proof}

The method in the proof fails already for $n = 4$. Indeed, let $V$ be the
Klein four group, and consider the dual of $(\Res_{\ast/\BB V}\GGm)/\GG_m$.
This torus is not stably rational by \cite[§4.10]{voskresenskii1997}.
Hence also the dual of $(\Res_{\BB\Sigma_{3}/\BB\Sigma_4}\GGm)/\GG_m$ fails to
be stably rational, since the former torus can be obtained as a base change of
the latter.

\section{Projective linear groups}
\label{com}
In this section, we start our investigation of the classes of
$\BB\PGL_n$ in the Grothendieck group $\KK_0(\cStack_k)$. The direct approach
described here is only feasible for $n = 2, 3$. We start by outlining the general approach.

Let $V$ be an $n$-dimensional vector space and denote by
$H^{d, n} := \PP\left((\SS^dV)^\vee\right)$ the space of degree $d$
hypersurfaces in $\PP(V)$. Since $\PGL_n$ is the
automorphism group of $\PP(V)$, we get an induced action by $\PGL_n$ on
$H^{d, n}$. In the cases when $d$ is a power of $n$, this action comes from
a {\em linear} representation of $\PGL_n$. Indeed, assume that $d = n^s$ and
consider the action of $\GL(V)$ on the space $(\SS^dV)^\vee$ of $d$-forms
given by
$$
\alpha \cdot f = v \mapsto (\det \alpha)^sf(\alpha^{-1}(v)),
\qquad \alpha \in \GL(V),
\qquad f \in  (\SS^dV)^\vee.
$$
Since the centre of $\GL(V)$ acts trivially, we get a linear representation of
$\PGL_n$ on the space $(\SS^dV)^\vee$, and $H^{d, n}$ is its projectivisation.

The fact that the $\PGL_n$-action on $H^n := H^{n, n}$ is induced by a
linear representation allows us to apply Proposition~\ref{gro-rep}, which
gives
$$
\{\BB \PGL_n\} = \{[H^n/\PGL_n]\}\frac{\LL-1}{\LL^r - 1},\qquad r = \binom{2n - 1}{n}
$$
This reduces the problem of computing the class of $\BB \PGL_n$ to computing
the class of the stack quotient $[H^n/\PGL_n]$.

Denote by $\Hsing^n$ the closed subspace of $H^n$ of singular hypersurfaces and
let $\Hns^n$ denote its complement. Since smoothness is invariant under projective
equivalence, this gives a stratification of $[H^n/\PGL_n]$ into corresponding
substacks $[\Hns^n/\PGL_n]$ and $[\Hsing^n/\PGL_n]$.

\subsection{\texorpdfstring{The class of $\BB\PGL_2$}{The class of BPGL2}}
\label{com-pg2}
Now we prove Theorem~\ref{mainBPGL} in the case $n=2$. In this case, the space
$H^2$ parametrises hypersurfaces of degree 2 in $\PP^1$. In order to avoid
non-reduced stabilisers, we assume that $2$ is invertible in the base field $k$.

The spaces $\Hns^2$ and $\Hsing^2$ consist of one orbit each. Let $xy$ and~$x^2$ be
representatives for these orbits and let $G_{xy}$ and $G_{x^2}$ denote the corresponding
stabilisers.

We prove that the group $G_{x^2}$ is isomorphic to $\GGa \rtimes \GGm$. This
follows if we prove that the stabiliser of the corresponding action of $\GL_2$
is the subgroup of lower triangular matrices. This is easily seen to be true on
geometric points. Furthermore, a general element $I + \varepsilon(a_{ij})$ of
$\Lie \GL_2$ takes the form $x^2$ to $x^2 + 2x\varepsilon(a_{11}x + a_{12}y)$. Since we are in
characteristic $\neq 2$, this forces $a_{12} = 0$ for an element of the
stabiliser of $x^2$. Hence the dimensions of the Lie algebra and the group coincides.
This proves that the stabiliser is smooth and so is determined by its points.

The stabiliser of $xy$ in $\GL_2$ is the subgroup $\Mon_2$ of monomial matrices.
As in the previous case, this is first verified on points. A similar Lie-algebra
computation as above, gives that the stabiliser is smooth regardless of the
characteristic of the field. Taking the quotient with the scalar matrices
gives $G_{xy} = \PMon_2$.

The group $G_{x^2} = \GGa \rtimes \GGm$ is special, so the class of its
classifying stack is simply the inverse $(\LL(\LL - 1))^{-1}$ of the
class of the group by Corollary~\ref{gro-st-mul}. The class of $\BB \PMon_2$ is
$\LL(\LL^2-1)^{-1}$ by Theorem~\ref{mainBPN}.
Combining these results gives the expression
$$
\left(
  \frac{1}{\LL(\LL - 1)} +
  \frac{\LL}{\LL^2-1}
\right)
\frac{\LL-1}{\LL^3-1}
= \frac{1}{\LL(\LL^2-1)}
$$
for the class of $\BB \PGL_2$. This is indeed the inverse of the
class of $\PGL_2$.

\subsection{The classes corresponding to singular plane cubics}
\label{com-sin}
In this section we compute the class of $[\Hsing^3/\PGL_3]$. In order to avoid
reduced stabilisers, we assume that $6$ is invertible in the base field $k$.
The orbits of $\Hsing^3$ correspond to the eight singular cubics in $\PP^2$
listed in Appendix~\ref{sin}. This gives a stratification of the stack quotient
$[\Hsing^3/\PGL_3]$ into eight locally closed subspaces. Since each orbit of
$\Hsing^3$ contains a rational point, the strata of $[\Hsing^3/\PGL_3]$ are
isomorphic to classifying stacks of the stabilisers of the respective points.
Again, according to the appendix, these stabiliser groups are

\vspace{8 pt}
\hfill
\hbox{%
\begin{tabular}{llllllll}
{a)} & $\GGa^2 \rtimes \GL_2$, &
{b)} & $\GGa^2 \rtimes \GGm^2$, &
{c)} & $\GGa^2 \rtimes G$, &
{d)} & $\PMon_3$, \\
{e)} & $\PMon_2$, &
{f)} & $\GGa \rtimes \GGm$, &
{g)} & $\GGm$, &
{h)} & $\mu_3 \rtimes \Sigma_2$ \\
\end{tabular}}
\hfill
\vspace{8 pt}

\noindent
and we will compute the classes of their classifying stacks to

\vspace{8 pt}
\hskip -4pt
\begin{tabular}{llllllll}
{a)} & $\LL^{-3}(\LL + 1)^{-1}(\LL - 1)^{-2}$ &
{b)} & $\LL^{-2}(\LL - 1)^{-2}$ &
{c)} & $\LL^{-2}(\LL - 1)^{-1}$ \\
{d)} & $\LL^3(\LL^2-1)^{-1}(\LL^3-1)^{-1}$ &
{e)} & $\LL(\LL^2 - 1)^{-1}$ &
{f)} & $\LL^{-1}(\LL - 1)^{-1}$ \\
{g)} & $(\LL - 1)^{-1}$ &
{h)} & $1$. \\
\end{tabular}
\vspace{8 pt}

\noindent
Among these classes, all but the class in case {\em c} have already been
treated.
Indeed, the groups in the cases {\em a}, {\em b}, {\em f} and {\em g} are special,
so the classes of their classifying stacks are inverses to the classes of the groups
themselves. The classes in the cases {\em e} and {\em d} were computed in the previous
section and {\em h} was given by Proposition~\ref{gro-class-u2}.

In case {\em c}, we have the group $\GGa^2 \rtimes G$, where $G$ is the subgroup
$\GGm \rtimes \Sigma_3$ of $\GL_2$ generated by its centre and the embedding of
$\Sigma_3$ induced by its irreducible 2-dimensional representation. The
inclusions $G \hookrightarrow \GL_2$ and
$\GGa^2 \rtimes G \hookrightarrow \GGa^2 \rtimes \GL_2$ both give rise to the
same quotient space. Since both groups on the right
hand side of these arrows are special, we get the relation
$$
\{\BB G\}\{\GL_2\} = \{\BB (\GGa^2\rtimes G)\}\{\GGa^2 \rtimes \GL_2\}  
$$
by Proposition~\ref{gro-exact}. This reduces the problem of computing
$\{\BB (\GGa^2\rtimes G)\}$ to computing $\{\BB G\}$. But $G$ is isomorphic
to $\GG_m\times\Sigma_3$, so $\{\BB G\} = (\LL-1)^{-1}$. Hence, the class of
$\BB (\GGa^2\rtimes G)$ is $\LL^{-2}(\LL - 1)^{-1}$.
\section{The gerbe of plane smooth cubics} 
\label{mod}
Recall that $\Hns$ denotes the space of smooth degree 3 hypersurfaces in
$\PP^2$. In the last section, we saw how the class of $\BB \PGL_3$ was related to the
class of the stack $[\Hns/\PGL_3]$. We shall now study the stack quotient
$[\Hns/\PGL_3]$ more closely. It may be worth noting that in
this section we will not need any restrictions on the base we are working
over. The results hold over $\Spec \ZZ$. 

Since all degree 3 hypersurfaces in $\PP^2$ are smooth genus~1 curves,
it seems natural to assume that $[\Hns/\PGL_3]$ is somehow related to the moduli
stack $\MC{1}{1}$ of elliptic curves.
The main result of this section is that $[\Hns/\PGL_3]$ is equivalent to the
neutral gerbe $\BB_{\MC{1}{1}} \UC[3]$ over $\MC{1}{1}$ associated to the
3-torsion subgroup $\UC[3]$ of the universal elliptic curve $\UC$.
We do this by first establishing an equivalence to the moduli
stack $\pMC{1}{3}$ of genus~1 curves polarised in degree~3,
which we define in the next section.

\subsection{Moduli of polarised genus 1 curves}
\label{mod-pol}
Consider a smooth genus~1 curve $C \to S$ over a scheme. Recall that we have
an exact sequence $1 \to \sPic^0_{C/S} \to \sPic_{C/S} \to \ZZ \to 1$, where
$\sPic_{C/S}$ is the Picard sheaf and the map to $\ZZ$ is the degree map. The
group $\sPic_{C/S}$ may therefore be written as a disjoint union of sheaves
$\sPic^d_{C/S}$ which are pre-images of the integers $d$ in $\ZZ$. By a
\term{polarisation} of $C$ in degree~$d$, we mean a global section of the
sheaf $\sPic^d_{C/S}$. Since for any morphism $S'\to S$ there is a natural
identification of $\sPic_{C/S} \times_S S'$ with $\sPic_{C_{S'}/S'}$, we may
pull back polarisations on $C \to S$ to $C_{S'} \to S'$.
This allows us to define the fibred category $\pMC{1}{d}$ of genus one curves
polarised in degree $d$. The objects are genus~1 curves together with degree~$d$
polarisations, and the morphisms are cartesian squares respecting these
polarisations.
That $\pMC{1}{d}$ is a stack follows from the sheaf property of $\sPic^d_{C/S}$.

We want to establish an equivalence between the stack quotient $[\Hns/\PGL_3]$ and
$\pMC{1}{3}$. First we give an explicit description of the pre-stack quotient
as a category fibred in groupoids over the category of schemes.
Its object are the same as the objects of $\Hns$, i.e., smooth genus~1
curves embedded in $\PP^2_T$ over some scheme $T$. Now let $f\colon
T' \to T$ be a morphism of schemes and let $\iota'\colon C' \hookrightarrow \PP^2_{T'}$
and $\iota\colon C \hookrightarrow \PP^2_{T}$ be objects over $T'$ and $T$
respectively. A morphism from $\iota'$ to $\iota$ over $f$ is then given by a pair $(\sigma, \alpha)$,
where $\alpha$ is an automorphism of $\PP^2_{T'}$ and $\sigma\colon C' \to C$ is a morphism
such that the diagram
$$
\begin{tikzpicture}[description/.style={fill=white,inner sep=2pt}]
\matrix (m) [matrix of math nodes, row sep=3em,
column sep=2.5em, text height=1.5ex, text depth=0.25ex]
{
C' & \PP^2_{T'} \\
C & \PP^2_T \\
};
\path[->,font=\scriptsize]
(m-1-1)
edge node[auto] {$\sigma$} (m-2-1)
(m-1-2)
edge node[auto] {$\PP^2(f)$} (m-2-2);
\path[right hook->,font=\scriptsize]
(m-1-1)
edge node[auto] {$\alpha \circ \iota'$} (m-1-2)
(m-2-1)
edge node[auto] {$\iota$}  (m-2-2);
\end{tikzpicture}
$$
is cartesian.
Now we define a 1-morphism $f\colon [\Hns/\PGL_3] \to \pMC{1}{3}$ of stacks. By the universal
property of stackification, it is enough to define it on the pre-stack quotient, which we
denote by $[\Hns/\PGL_3]^{\mathrm{pre}}$.
It takes objects $\iota\colon  C \hookrightarrow \mathbb{P}^2_T$ to pairs
$(C \to T, [\iota^*\mathcal{O}(1)])$ and morphisms $(\sigma, \alpha)$ to $\sigma$.
Note that $f$ is well-defined on objects since smooth degree~3 hypersurfaces of
$\PP^2$ are smooth genus~1 curves and well-defined on morphisms since the
automorphism $\alpha$ does not affect the isomorphism class of the pulled back
line bundle.

\begin{prop}
\label{mod-bs-pol}
The 1-morphism $f\colon [\Hns/\PGL_3] \to \pMC{1}{3}$ defined in the
paragraph above is an equivalence of stacks.
\end{prop}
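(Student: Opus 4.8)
The plan is to exhibit an explicit quasi-inverse $g\colon \pMC{1}{3} \to [\Hns/\PGL_3]$ and then check that $f \circ g$ and $g \circ f$ are naturally isomorphic to the respective identities. Since both source and target are stacks, it suffices to work on the prestack quotient $[\Hns/\PGL_3]^{\mathrm{pre}}$ and use the universal property of stackification, so I may assume throughout that the base scheme $T$ is affine and that the various sheaves in play (the $\sPic^d$, the pushforwards of powers of the polarising bundle) behave as expected after shrinking $T$.

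First I would construct $g$ on objects. Given a smooth genus~$1$ curve $\pi\colon C \to T$ together with a degree~$3$ polarisation $\lambda \in \sPic^3_{C/T}(T)$, the key point is that $\lambda$, being a section of $\sPic^3_{C/T}$, is represented fppf-locally on $T$ by an actual degree~$3$ line bundle $\sheaf{L}$ on $C$; any two such differ by pullback of a line bundle from $T$. For a degree~$3$ bundle on a genus~$1$ curve, $\pi_\ast\sheaf{L}$ is locally free of rank~$3$, its formation commutes with base change, and $\sheaf{L}$ is very ample, so the canonical map $C \to \PP(\pi_\ast\sheaf{L})$ is a closed immersion realising $C$ as a plane cubic in a $\PP^2$-bundle over $T$. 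Choosing a trivialisation of $\pi_\ast\sheaf{L}$ identifies this $\PP^2$-bundle with $\PP^2_T$ and produces an object of $\Hns$ over $T$; changing the trivialisation, or changing $\sheaf{L}$ within the class $\lambda$, changes the embedding by an automorphism of $\PP^2_T$, i.e.\ by the $\PGL_3$-action, so the induced object of the \emph{stack quotient} is well-defined. The assignment on morphisms is forced: an isomorphism of polarised curves induces, by functoriality of $\pi_\ast$, an isomorphism of the associated $\PP^2$-bundles compatible with the embeddings, hence a morphism in $[\Hns/\PGL_3]$. One checks $f \circ g \cong \id$ essentially tautologically, since applying $f$ recovers the curve and the class $[\iota^\ast\sheaf{O}(1)] = \lambda$; and $g \circ f \cong \id$ because, starting from an embedded cubic $\iota\colon C \hookrightarrow \PP^2_T$, the bundle $\iota^\ast\sheaf{O}(1)$ has $\pi_\ast\iota^\ast\sheaf{O}(1) \cong \sheaf{O}_T^{\oplus 3}$ canonically up to the $\PGL_3$-ambiguity, and the re-embedding it produces is the original one.

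The step I expect to be the main obstacle is the well-definedness of $g$ on objects as a morphism to the quotient \emph{stack} — that is, carefully tracking the two sources of non-uniqueness (the choice of representing line bundle $\sheaf{L}$ for the section $\lambda$, and the choice of trivialisation of $\pi_\ast\sheaf{L}$) and verifying that both are absorbed precisely by the $\PGL_3$-action, with the gluing data over an fppf cover of $T$ satisfying the cocycle condition so that the locally defined objects descend to an honest object of $[\Hns/\PGL_3]$ over $T$. This is where one uses that $\sPic^3_{C/T}$ is the fppf-sheafification of the presheaf of degree~$3$ line bundles, that $\PP(\pi_\ast\sheaf{L})$ depends on $\sheaf{L}$ only through $\lambda$ since twisting $\sheaf{L}$ by $\pi^\ast M$ twists $\pi_\ast\sheaf{L}$ by $M$ and leaves the projectivisation unchanged, and that the relevant $\Isom$-sheaf of trivialisations is a $\GL_3$-torsor whose induced $\PGL_3$-torsor is exactly the one classified by the map $T \to \BB\PGL_3$ underlying the object of $[\Hns/\PGL_3]$. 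Once this bookkeeping is in place, the remaining verifications that $f$ and $g$ are mutually quasi-inverse are routine diagram chases using base-change compatibility of $\pi_\ast$, and the proposition follows.
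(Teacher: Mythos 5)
Your strategy---constructing an explicit quasi-inverse $g$ and checking $f\circ g\cong\id$ and $g\circ f\cong\id$---is a legitimate repackaging of the paper's argument, which instead proves directly that $f$ is fully faithful and essentially surjective. Your construction of $g$ on objects (representing the polarisation fppf-locally by a line bundle $\sheaf{L}$, embedding $C$ into $\PP(q_*\sheaf{L})$, trivialising, and absorbing both ambiguities into the $\PGL_3$-action) is precisely the paper's essential-surjectivity argument, and your emphasis on the descent bookkeeping needed for $g$ to land in the quotient \emph{stack} is well placed. What each route buys: the paper never constructs an inverse and so avoids that bookkeeping entirely, at the cost of separate fullness and faithfulness checks; your route concentrates the work in the well-definedness of $g$ and in the naturality of the two 2-isomorphisms.

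However, two inputs carrying the real content are asserted rather than proved, and one of them sits exactly in the step you call ``essentially tautological''. First, the claim that $\pi_*\iota^*\sheaf{O}(1)\cong\sheaf{O}_T^{\oplus 3}$ canonically --- equivalently, that the canonical map $p_*\sheaf{O}(1)\to q_*\iota^*\sheaf{O}(1)$ is an isomorphism, so an embedded plane cubic is re-embedded by the complete linear system, in families --- requires an argument; the paper obtains it by twisting the sequence $0\to\sheaf{O}(-3)\to\sheaf{O}\to\iota_*\sheaf{O}_C\to 0$ by $\sheaf{O}(1)$ and using the vanishing of $p_*\sheaf{O}(-2)$ and $R^1p_*\sheaf{O}(-2)$. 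Second, and more seriously, $g\circ f\cong\id$ is not tautological on morphisms: a morphism of $[\Hns/\PGL_3]$ is a pair $(\sigma,\alpha)$, the functor $f$ forgets $\alpha$, and the naturality square for your claimed 2-isomorphism forces $\alpha$ to be recoverable from $\sigma$ and the embeddings --- that is, an automorphism of $\PP^2_T$ inducing the identity on the embedded cubic, compatibly with $\sheaf{O}(1)$, must itself be the identity. This is exactly the paper's faithfulness argument, proved via the adjunction for $(q^*,q_*)$ together with $q_*\sheaf{O}_C\cong\sheaf{O}_T$ (again from the structure sequence); it is a statement about families and possibly non-reduced bases, not just about points, so it cannot be waved through as a routine diagram chase. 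Make these two verifications explicit and your proof goes through; as written, the existence of the natural isomorphism $g\circ f\cong\id$ is unsupported at precisely the point where the equivalence could fail.
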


\begin{proof}
Let $\iota\colon  C \to \PP_T^2$ be an object of the
pre-stack quotient $[\Hns/\PGL_3]^\mathrm{pre}$ over a scheme $T$, and denote
the structure maps to $T$ by $q\colon C \to T$ and $p\colon \PP^2_T \to T$ respectively.
We also use the shorthand notation $\sheaf{L}$ for the invertible sheaf $\iota^*\sheaf{O}(1)$.
In order to prove that $f$ is fully faithful, it is enough to prove that it induces an
isomorphism between the automorphism group of $\iota\colon  C \to \PP_T^2$ in $[\Hns/\PGL_3]$ and
the automorphism group of $(q\colon C \to T, [\sheaf{L}])$ in $\pMC{1}{3}$.

First we will prove that the $\sheaf{O}_T$-module homomorphism
$p_*\sheaf{O}(1) \to q_*\sheaf{L}$ corresponding to the embedding as described
in \cite[§4.2]{egaII} is an isomorphism.
Since this may be verified locally, we may assume that we have a short exact
sequence of quasi-coherent $\sheaf{O}_{\PP^2}$-modules
\begin{equation}
\label{mod-hyper}
0 \to \sheaf{O}(-3) \to \sheaf{O} \to \iota_* \sheaf{O}_C \to 0.
 \end{equation}
Tensoring with the fundamental sheaf $\sheaf{O}(1)$ and using the projection formula on
the last term gives a new short exact sequence
$$
0 \to \sheaf{O}(-2) \to \sheaf{O}(1) \to \iota_* \sheaf{L} \to 0.
$$
Pushing this forward to $T$ gives rise to the exact sequence
$$
0 \to p_*\sheaf{O}(-2) \to p_*\sheaf{O}(1) \to q_* \sheaf{L} \to
R^1p_*\sheaf{O}(-2). $$
The map in the middle is the canonical map mentioned above, and it is an isomorphism
since both the first and last terms vanish \cite[Thm.~III.5.1]{hartshorne1977}. This allows us to assume
that $\PP^2_T = \PP(q_*\sheaf{L})$ and that the embedding $\iota$ corresponds to the
canonical map $\varepsilon\colon q^*q_*\sheaf{L} \to \sheaf{L}$.

{\em The functor $f$ is faithful.} To prove this, it is enough to show that for any
automorphism of $\iota\colon  C \to \PP(q_*\sheaf{L})$ of the form $(\id_C, \alpha)$,
the $\PP(q_*\sheaf{L})$-automorphism $\alpha$ is the identity. This may be
verified locally. Hence we may assume that the automorphism $\alpha$ is of the form $\PP(\beta)$, where
$\beta$ is an $\sheaf{O}_T$-module automorphism of $q_*\sheaf{L}$. The criterion that
$\alpha$ fixes the embedding $\iota$ is that there exists an $\sheaf{O}_C$-module
automorphism $\gamma$ of $\sheaf{L}$ such that the diagram
\vspace{5 pt} \\
\hspace*{\fill}
\begin{tikzpicture}[node distance=2cm, auto]
\node (A) {$q^*q_*\sheaf{L}$};
\node (B) [right of=A] {$\sheaf{L}$};
\node (C) [below of=A] {$q^*q_*\sheaf{L}$};
\node (D) [below of=B] {$\sheaf{L}$};

\draw[->] (A) -- (B);
\draw[->] (C) -- (D);
\draw[->] (A) to node [swap] {$q^*\beta$} (C);
\draw[->] (B) to node {$\gamma$} (D);
\end{tikzpicture}
\hspace{\fill}
\vspace{5 pt} \\
commutes. By using the adjunction property of the pair $(q^*, q_*)$, we see that $\beta$ must be
of the form $q_*\gamma$. The automorphism $\gamma$ may be viewed as a global section of
$\sheaf{O}_C^\times$. If we apply $p_*$ to the exact sequence (\ref{mod-hyper}), we get the
exact sequence
$$
0 \to p_*\sheaf{O}(-3) \to p_*\sheaf{O} \to q_* \sheaf{O}_C \to
R^1p_*\sheaf{O}(-3). $$
Since both $p_*\sheaf{O}(-3)$ and $R^1p_*\sheaf{O}(-3)$ vanish, we see that
$q_* \sheaf{O}_C \cong p_*\sheaf{O}$, with the latter sheaf being isomorphic to $\sheaf{O}_T$.
Hence $q_*\gamma$ is a global section of $\sheaf{O}_T^\times$. It follows that the automorphism
$\alpha$ is the identity.

{\em The functor $f$ is full.} To prove this, we need to verify that the map on automorphisms
is surjective. Let $\sigma$ be a $T$-automorphism of $C$ such that
$[\sigma^*\sheaf{L}] = [\sheaf{L}]$ in $\sPic_{C/T}(T)$. It is enough to show that
$\sigma$ locally is given by an automorphism of $\PP(q_*\sheaf{L})$, so we may assume that
$\sigma^*\sheaf{L} \simeq \sheaf{L}$. The new embedding $\iota \circ \sigma$
then corresponds to the automorphism $\alpha\colon q_*\sheaf{L} \to
q_*\sheaf{L}$ given by $s \mapsto \sigma^*(s)$.
It follows that $\PP(\alpha)\colon \PP(q_*\sheaf{L}) \to \PP(q_*\sheaf{L})$ is
our sought automorphism.

{\em The functor $f$ is essentially surjective.} This may also be checked fppf-locally.
Hence, given an object $(q\colon C \to T, \lambda)$ of $\pMC{1}{3}$, we may assume that $\lambda$
comes from a line bundle $\sheaf{L}$ of degree $3$ on $C$.
The push forward $q_*\sheaf{L}$ is locally free of rank 3, and we get an
embedding of $C$ into the projective bundle $\PP(q_*\sheaf{L})$. This is classic
in the case where the base is a field, and follows from a cohomology and base
change argument in the relative case. By extending the base further if
necessary, we may assume that this bundle is $\PP^2_T$, so our object
$(q\colon C \to T, \lambda)$ comes from an object of $[\Hns/\PGL_3]^\text{pre}$.
\end{proof}

\subsection{An interlude on torsors}
\label{mod-tor}
If $A$ is a sheaf of abelian groups, the contraction product of two
$A$-torsors gives a new torsor. In particular, we may form contraction powers
$T^i$ of a torsor $T$.
We may also take the inverse $T^{-1} := \sIsom_A(T, A)$, giving the set
$\HH^1(S, A)$ of isomorphism classes of torsors a group structure and
the stack $\BB A$ the structure of a Picard stack. In the abelian case, the
stack $\BB A$ is isomorphic to the stack $\stExt{1}(\ZZ, A)$ of extensions of
$\ZZ$ by $A$. The $S$-points of this stack are short exact sequences
$$
0 \to A_S \to E \to \ZZ_S \to 0
$$
of sheaves over $S$ with morphisms of such sequences being isomorphisms of
complexes. Given such a short exact sequence, we get a corresponding torsor by
taking the elements in $E$ mapping to $1$. Conversely, given a torsor
$T_S$ over $S$, we may form the exact sequence
$$
0 \to A_S \to \coprod_{i \in \ZZ} T_S^i \to \ZZ_S \to 0.
$$
A more detailed account on this equivalence is given in  \cite[Exposé~VII]{SGA7I}.
It is interesting to note that the decategorification of this functor induces the
usual isomorphism $\HH^1(S, A) \to \Ext^1(\ZZ, A)$.

In order to describe $\pMC{1}{3}$ as a classifying stack, we would like to
reinterpret polarisations in terms of torsors. It turns out that much of this
may be worked out in the general theory for torsors for abelian sheaves over an
arbitrary site $\stack{C}$. Hence we make a short interlude, working in this
generality.

Let $A$ be a fixed sheaf of abelian groups on $\stack{C}$. Given an $A$-torsor
$T$ and a positive integer $n$, we have a map $n_T\colon T \to T^n$ taking a
local section $t$ of $T$ to its $n$-fold contraction power $(t, \ldots , t)$.
In particular, $n_A\colon A \to A$ is the map taking a generalised point $a$ to
its $n$-fold product $a^n$ using the group law. The kernel of this map is the $n$-torsion
subgroup of $A$, which we denote by $A[n]$. If $A$ is an $n$-torsion group,
there is a canonical identification of $T^n$ with $A$ for each $A$-torsor $T$.

\begin{lemma}
\label{mod-can}
Let $A$ be a sheaf of abelian groups on a site $\stack{C}$ and $T$ an $A$-torsor.
If $A$ is an $n$-torsion group, then the torsor $T^n$ has a canonical global
section $\kappa$.
\end{lemma}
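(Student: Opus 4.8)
The plan is to build $\kappa$ by descent, exploiting that when $A$ is $n$-torsion the map $n_T\colon T \to T^n$ is ``constant''. First I would fix an explicit model for the contraction power: $T^n$ is the quotient of the $n$-fold product $T \times \cdots \times T$ by the relation generated by $(\dots, t_i\cdot a,\, t_{i+1}, \dots) \sim (\dots, t_i,\, a\cdot t_{i+1}, \dots)$ for $a$ a local section of $A$. Since $A$ is abelian I may use one fixed ($A$-equivariant) identification of its left and right actions, so that a local section of $A$ slides freely past the bar between any two consecutive factors; this is the only structural fact about $T^n$ I will need.

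The key computation is then the following. Let $t, t'$ be local sections of $T$ over the same object $U$ of $\mathcal{C}$. Because $T$ is an $A$-torsor there is a unique $a \in A(U)$ with $t' = t\cdot a$, so $n_T(t') = (t\cdot a, \dots, t\cdot a)$. Sliding the copy of $a$ in the first slot to the right through each subsequent slot, one at a time, accumulates all $n$ copies onto the last factor and yields $(t, \dots, t,\, t\cdot a^n)$. As $A$ is $n$-torsion, $a^n$ is the identity section of $A(U)$, hence $n_T(t') = (t,\dots,t) = n_T(t)$.

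To conclude I would invoke descent. Since $T \to S$ is an $A$-torsor, the canonical map $T \times_S T \to T \times A$, $(t, t\cdot a) \mapsto (t, a)$, is an isomorphism, and $T \to S$ is the coequaliser of the two projections $T \times_S T \rightrightarrows T$ (equivalently $S = T/A$). The previous paragraph says precisely that $n_T\colon T \to T^n$ coequalises those two projections, so it factors uniquely through a morphism $\kappa\colon S \to T^n$, i.e.\ a global section of $T^n$. Since the construction used no choice of cover and no choice of local section of $T$, the section $\kappa$ is canonical, and in particular compatible with pullback along morphisms of the base, which is what subsequent sections will require.

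The main obstacle is not a conceptual one but the bookkeeping in the sliding step: one must check carefully, in the sheaf-theoretic quotient, that the defining relation really does move a section of $A$ past a bar and that, with the left/right actions identified, the accumulated factor is exactly $a^n$ rather than some twisted product. If this becomes awkward I would fall back on the conceptual reformulation: $T^n$ is the pushforward $(n_A)_\ast T$ of $T$ along the multiplication-by-$n$ endomorphism $n_A$ of $A$; when $A$ is $n$-torsion this endomorphism is the zero map, and the pushforward of any torsor along the zero homomorphism is \emph{canonically} the trivial torsor $A$, whose identity section then transports to the desired $\kappa$. This route trades the explicit slides for the mild overhead of setting up the natural isomorphism $T^n \cong (n_A)_\ast T$.
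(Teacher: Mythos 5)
Your proof is correct and rests on the same key computation as the paper's: for two local sections $t$ and $t' = t\cdot a$ over the same object, $n$-torsion of $A$ gives $n_T(t') = a^n\cdot n_T(t) = n_T(t)$, so the $n$-th power of a local section is independent of the choice. The only difference is packaging: the paper glues the resulting canonical local sections over a covering on which $T$ has sections, while you factor $n_T$ through the coequaliser of $T\times_\ast T \rightrightarrows T$ (using that a torsor projection is an epimorphism of sheaves), which is the same descent argument in universal-property form.
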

\begin{proof}
Fix an object $S \in \stack{C}$ such that $T(S)$ is non-empty and let $x, y \in
T(S)$. Then $y = a\cdot x$ for some group element $a \in A(S)$. We have 
$y^n = (a\cdot x)^n = a^n\cdot x^n = x^n$, since $A$ is an $n$-torsion group.
It follows that $T^n$ has a canonical $S$-point $\kappa_S = x^n$.
Taking a covering $S_i$ such that $T(S_i)$ has sections, the canonical local
sections $\kappa_{S_i}$ glue together to the global section $\kappa$.
\end{proof}

We define the category $\BB_n A$, fibred over $\stack{C}$, as the category of pairs
$(T\to S, \lambda\colon S \to T^n)$, where $T\to S$ is an $A$-torsor over some
object $S$ in $\stack{C}$ and $\lambda$ is a global section of $T^n$. Morphisms
are pullbacks of sheaves respecting the global sections. Recall that the
inclusion $A[n] \to A$ induces a morphism $\BB A[n] \to \BB A$ taking an
$A[n]$-torsor $T$ to the  $A$-torsor $T' = A \contr{A[n]}T$. The canonical
global section $\kappa$ of $T^n$ allows us to define a canonical global section
$(1, \kappa)$ of $(T')^n \cong A \contr{A[n]} T^n$. Hence we get a natural map
$$
\BB A[n] \to \BB_n A
$$
through which $\BB A[n] \to \BB A$ factors. This is not an equivalence in
general, but we have the following result.

\begin{prop}
\label{mod-equiv}
Let $A$ be a sheaf of abelian groups, and assume that the map
$n_A\colon A \to A$ is surjective. Then the natural
map $\BB A[n] \to \BB_n A$ is an equivalence of stacks.
\end{prop}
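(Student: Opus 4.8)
The plan is to exhibit an explicit quasi-inverse to the functor $F\colon \BB A[n] \to \BB_n A$. Given an object $(T \to S, \lambda)$ of $\BB_n A$, I would consider the subsheaf $T' \subseteq T$ defined as the equaliser of the two maps $T \to T^n$ given by $n_T$ and by the constant section $\lambda$; informally, $T'$ is the sheaf of local sections $t$ of $T$ with $t^n = \lambda$. The group $A$ acts on $T'$ by restriction of its action on $T$, and since $n_T(a\cdot t) = a^n\cdot n_T(t)$ and $A$ acts freely on the torsor $T^n$, a local section $a$ of $A$ carries $T'$ into itself precisely when $a^n = 1$, i.e.\ when $a$ is a section of $A[n]$. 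Thus $A[n]$ acts on $T'$, freely because the $A$-action on $T$ is free, and transitively on sections for the same reason: any two sections of $T'$ over $U$ differ by a section of $A$, which by the previous observation must lie in $A[n]$. So $T'$ is at least an $A[n]$-pseudo-torsor.

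The one substantive point is that $T'$ is \emph{locally non-empty}, and this is exactly where the hypothesis that $n_A\colon A \to A$ be an epimorphism of sheaves enters. Working locally on $S$, choose a section $t_0$ of $T$; then $n_T(t_0)$ and $\lambda$ are two sections of the $A$-torsor $T^n$, so $\lambda = a\cdot n_T(t_0)$ for a unique local section $a$ of $A$. Refining the cover, surjectivity of $n_A$ lets us write $a = b^n$ for a local section $b$ of $A$, and then $n_T(b\cdot t_0) = b^n\cdot n_T(t_0) = \lambda$, so $b\cdot t_0$ is a local section of $T'$. Hence $T'$ is an $A[n]$-torsor, and we obtain a morphism of stacks $G\colon \BB_n A \to \BB A[n]$, $(T,\lambda)\mapsto T'$; on morphisms, an isomorphism of $A$-torsors matching up the distinguished sections of the $n$-th powers restricts to an isomorphism of the corresponding sub-torsors $T'$, and since forming $T'$ commutes with base change, functoriality is immediate. (Both $\BB A[n]$ and $\BB_n A$ are stacks, descent for the latter being straightforward, so producing a quasi-inverse suffices.)

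It then remains to produce natural isomorphisms $G\circ F \simeq \id$ and $F\circ G \simeq \id$, both formal. For an $A[n]$-torsor $P$, recall $F(P) = \big(A\contr{A[n]}P,\ (1,\kappa)\big)$, where $\kappa$ is the canonical section $p^n$ of $P^n$ furnished by Lemma~\ref{mod-can}, well defined because $P$ is $n$-torsion. The map $P \to G(F(P))$ sending a local section $p$ to $(1,p)$ is well defined, since under the identification $(A\contr{A[n]}P)^n \cong A\contr{A[n]}P^n$ one has $(1,p)^n = (1,p^n) = (1,\kappa)$, and it is a morphism of $A[n]$-torsors, hence an isomorphism, visibly natural in $P$. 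Conversely, for $(T,\lambda)$ in $\BB_n A$, the multiplication map $A\contr{A[n]}T' \to T$, $(a,t')\mapsto a\cdot t'$, is a well-defined morphism of $A$-torsors, hence an isomorphism, and it carries the canonical section of $(A\contr{A[n]}T')^n$ to $\lambda$ because $(t')^n = \lambda$ for any local section $t'$ of $T'$; this is again natural. Since $F$ admits a quasi-inverse, it is an equivalence of stacks.

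I expect the only place requiring genuine care to be the verification that $T'$ is locally non-empty together with the bookkeeping around contraction powers — that $n_T$ is twisted-equivariant in the sense $n_T(a\cdot t) = a^n\, n_T(t)$, that $T^n$ is again an $A$-torsor, and that $(A\contr{A[n]}P)^n$ is canonically $A\contr{A[n]}P^n$ — which must be set up consistently but is routine. Everything else is the standard $2$-categorical check that two morphisms of stacks are mutually quasi-inverse; equivalently, one could argue directly that $F$ is fully faithful (the automorphism sheaf of $P$ is $A[n]$, and that of $(T,\lambda)$ is $\{a \in A : a^n = 1\} = A[n]$, with $F$ the identity on these) and locally essentially surjective, the latter being precisely the local non-emptiness of $T'$.
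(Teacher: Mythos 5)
Your proposal is correct and follows essentially the same route as the paper: you form the subsheaf $T_\lambda=\{t\in T\mid t^n=\lambda\}$ (your $T'$), check it is an $A[n]$-torsor using surjectivity of $n_A$ for local non-emptiness, and exhibit the same unit and counit maps $p\mapsto(1,p)$ and $(a,t')\mapsto a\cdot t'$ as the explicit quasi-inverse data. The only difference is that you spell out more of the verifications the paper leaves to the reader.
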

\begin{proof}
We prove the equivalence by constructing a 2-inverse explicitly.
Given an object $(T\to S, \lambda)$, we may define the subsheaf
$T_\lambda \subset T$ over $S$ as the pullback of
$n_T\colon T \to T^n$ along the map $\lambda\colon S \to T^n$. On
$S'$-points, this may be described as
$$
T_\lambda(S') := \{x \in T(S') \mid x^n = \lambda \text{ in } T^n(S')\}.
$$
From this description it is straightforward to verify that the
$A$-action on $T$ restricts to a well-defined $A[n]$-action on
$T_\lambda$. This is free and transitive, making $T_\lambda$ a pseudo-torsor
for $A[n]$. Locally, the morphism $n_T$ is just $n_A$, so $n_T$ is surjective.
Hence the same holds for the structure map $T_\lambda \to S$, which proves that
$T_\lambda$ actually is a torsor.

Given objects $(T, \lambda)$ and $T'$ in $\BB_n A$ and $\BB A[n]$
respectively, we have natural maps
$$
\eta_{(T, \lambda)}\colon A\contr{A[n]}T_\lambda \to T, \qquad
\varepsilon_{T'} \colon T' \to (A\contr{A[n]}T')_{(1, \kappa)}
$$
given on generalised points by $(a, t) \mapsto at$ and
$t \mapsto (1, t)$ respectively. The reader may verify that these are
isomorphisms in the categories $\BB_n A$ and $\BB A[n]$ respectively.
\end{proof}

\subsection{The stack of polarised genus~1 curves as a gerbe}
\label{mod-ger}
Now we apply the results from the previous subsection to our situation
with the stack $\pMC{1}{n}$ to show that it is a gerbe over $\MC{1}{1}$.
However, we cannot use the result directly, since the base $\MC{1}{1}$ is
a stack rather than a scheme. Proposition~\ref{mod-equiv} could be generalised
to this situation, but we shall instead just give the explicit description in
this special case. Since it should be easy to fill in the details, we shall
allow ourselves to be somewhat sketchy.

The fibred category $\BB_{\MC{1}{1}}\UC[n]$ over schemes has pairs
$$
(E\to S, T \to S),
$$
as objects, where $E \to S$ is an elliptic curve and $T \to S$ is an
$E[n]$-torsor. We will use the rest of the section to prove the following
proposition:
\begin{prop}
\label{mod-pol-class}
The stack $\pMC{1}{n}$ is equivalent to $\BB_{\MC{1}{1}}\UC[n]$.
\end{prop}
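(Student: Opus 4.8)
The plan is to factor the desired equivalence through the relative analogue of the category $\BB_n A$ of Section~\ref{mod-tor}. Let $\BB_n \UC$ denote the fibred category over schemes whose objects over $S$ are triples $(E \to S,\ T \to S,\ \lambda\colon S \to T^n)$, with $E/S$ an elliptic curve, $T$ an $E$-torsor, and $\lambda$ a global section of the contraction power $T^n$ (this is $\BB_n A$ in the sense of Section~\ref{mod-tor} for the sheaf $A = \UC$ on the fppf site of $\MC{1}{1}$). The proposition then follows from two equivalences, $\pMC{1}{n} \simeq \BB_n \UC$ and $\BB_n \UC \simeq \BB_{\MC{1}{1}}\UC[n]$.

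For the first equivalence I would use the dictionary between polarisations and torsor powers. Given a smooth proper genus~1 curve $C \to S$, its Jacobian $E := \sPic^0_{C/S}$ is an elliptic curve over $S$; the genus-1 Abel--Jacobi map gives a canonical isomorphism $C \cong \sPic^1_{C/S}$ of $E$-torsors, and tensor product of line bundles gives canonical isomorphisms $\sPic^{d_1}_{C/S} \contr{E} \sPic^{d_2}_{C/S} \cong \sPic^{d_1+d_2}_{C/S}$, hence a canonical isomorphism $\sPic^d_{C/S} \cong C^d$ of $E$-torsors, everything compatible with base change in $S$. Under these identifications a degree-$n$ polarisation of $C$, i.e.\ a section of $\sPic^n_{C/S}$, becomes a section of $C^n$, and so $(C, \lambda) \mapsto (\sPic^0_{C/S},\, C,\, \lambda)$ defines a morphism $\pMC{1}{n} \to \BB_n \UC$. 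It is essentially surjective because any $E$-torsor $T$ over $S$ is itself a smooth proper genus~1 curve with Jacobian $E$ and with $\sPic^1_{T/S} \cong T$, so every triple $(E, T, \lambda)$ lies in its image; and it is fully faithful because, by the naturality of the identifications above, an isomorphism $(C, \lambda) \to (C', \lambda')$ in $\pMC{1}{n}$ is exactly an isomorphism of the underlying $E$-torsors carrying $\lambda$ to $\lambda'$.

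For the second equivalence I would apply Proposition~\ref{mod-equiv}. Multiplication by $n$ on an elliptic curve is finite locally free of degree $n^2$, in particular an epimorphism of fppf sheaves, so the hypothesis that $n_A\colon A \to A$ be surjective holds for $A = \UC$; hence Proposition~\ref{mod-equiv}, applied over the fppf site of $\MC{1}{1}$, gives $\BB_n \UC \simeq \BB \UC[n] = \BB_{\MC{1}{1}}\UC[n]$. Alternatively, and more in keeping with the explicit spirit of this section, one re-runs the proof of Proposition~\ref{mod-equiv} relative to $\MC{1}{1}$: in one direction $(E, T, \lambda)$ goes to $(E, T_\lambda)$ with $T_\lambda \subset T$ the pullback of $n_T\colon T \to T^n$ along $\lambda$, an $\UC[n]$-torsor; in the other, $(E, T')$ goes to $(E,\ E\contr{\UC[n]}T',\ (1, \kappa))$ with $\kappa$ the canonical section of $(T')^n$ from Lemma~\ref{mod-can}; the two constructions are mutually quasi-inverse by the same computation as in that proof. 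Composing the two equivalences gives the proposition.

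The only step that is not formal bookkeeping is the first equivalence, and within it the relative statements $C \cong \sPic^1_{C/S}$ and $\sPic^d_{C/S} \cong C^d$ together with their compatibility with base change; this is precisely where the genus-1 hypothesis enters, since for curves of higher genus $C$ is not a torsor under its Jacobian and the whole description breaks down. Everything downstream of that is a combination of repackaging and the already-established Proposition~\ref{mod-equiv} and Lemma~\ref{mod-can}.
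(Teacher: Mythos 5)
Your proposal is correct and follows essentially the same route as the paper: it introduces the same intermediate stack of triples $(E \to S,\ T \to S,\ \lambda\colon S \to T^n)$, identifies $\pMC{1}{n}$ with it via the Jacobian together with the canonical isomorphism $C \cong \sPic^1_{C/S}$ and its contraction powers, and then applies Proposition~\ref{mod-equiv} (using surjectivity of $n_{\UC}$, checked after pullback to schemes over $\MC{1}{1}$) to compare with $\BB_{\MC{1}{1}}\UC[n]$. The only cosmetic difference is that the paper, rather than invoking a stack-base version of Proposition~\ref{mod-equiv} directly, verifies the equivalence by pulling back along arbitrary maps $S \to \MC{1}{1}$ with $S$ a scheme, which is exactly your alternative argument.
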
 
Consider the fibred category $(\BB_n)_{\MC{1}{1}}\UC$ over schemes
whose objects are triples
$$
(E\to S, T \to S, \lambda\colon S\to T^n),
$$
where $E \to S$ is an elliptic curve and $T \to S$ is an $E$-torsor.
Now let $(C \to S, \lambda\colon S \to \sPic^n_{C/S})$ be an object
of $\pMC{1}{n}$. Since the Picard sheaf $\sPic_{C/S}$ is an extension
of $\ZZ_S$ by $\sPic^0_{C/S}$, the component $\sPic^1_{C/S}$ is a
$\sPic^0_{C/S}$-torsor and $\sPic^n_{C/S}$ is canonically isomorphic
to its $n$-th contraction power. The group $\sPic^0_{C/S}$ is an
elliptic curve, being the Jacobian of a genus~1 curve. Hence we get
a well-defined 1-morphism $\pMC{1}{n} \to (\BB_n)_{\MC{1}{1}}\UC$
over $\MC{1}{1}$ taking the object to
$$
(\sPic^0_{C/S},\sPic^1_{C/S}, \lambda\colon S \to \sPic^n_{C/S}).
$$
Note that since $C \to S$ is a smooth genus~1 curve, there is a canonical
isomorphism $C \to \sPic^1_{C/S}$, so this 1-morphism has an
obvious 2-inverse.

Now we consider the functor
$f\colon \BB_{\MC{1}{1}}\UC[n] \to (\BB_n)_{\MC{1}{1}}\UC$.
This is defined analogously with the equivalence in the previous section
by taking $(E \to S, T\to S)$ to $(E \to S, E\contr{E[n]}T, (1, \kappa))$.
For an arbitrary scheme $S$ and a morphism $S \to \MC{1}{1}$, corresponding to
an elliptic curve $E \to S$, the functor above pulls back to the functor
$f_S\colon \BB_S E[n] \to (\BB_n)_{S}E$. Since $n_E: E \to E$ is an
isogeny, and in particular surjective on the underlying sheaves, we are now in the
situation where we can apply Proposition~\ref{mod-equiv}. Therefore $f_S$,
and hence also $f$, is an equivalence, and we are done.

\begin{remark}
Let $k$ be a field and $E/k$ an elliptic curve.
Then the moduli interpretations of the stacks
$\BB_{\MC{1}{1}}\UC[n]$, $[\Hns/\PGL_3]$ and~$\pMC{1}{n}$
respectively give the following descriptions of equivalence
classes of $k$-points mapping to $E\colon \Spec k \to \MC{1}{1}$:
\begin{enumerate}
\item[a)]
The set of isomorphism classes of $E[3]$-torsors over $k$.
\item[b)]
The set of equivalence classes of embeddings $C \hookrightarrow S$,
where $C$ is an $E$-torsor and $S$ a 2-dimensional Brauer--Severi variety over~$k$.
\item[c)]
The set of equivalence classes of pairs $(C, \lambda)$ where $C$ is an
$E$-torsor and $\lambda$ is a divisor class on $C$ of degree 3.
\end{enumerate}
Thus we recover three different more or less well-known
interpretations of the cohomology group $H^1(k, E[3])$.
For further discussion of these and other interpretations,  
see~\cite[§1]{cfoss2008}).
\end{remark}

\section{Groups of 3-torsion points of elliptic curves}
\label{com-smo}
Fix an arbitrary prime $\ell$ and let $S$ be a scheme over a field $k$. By a
rank $n$ \term{local system} for $\GF{\ell}$ over a scheme $S$, we mean a sheaf
$V$ which is locally isomorphic to an $n$-dimensional $\GF{\ell}$-vector space
considered as a constant sheaf. A \term{symplectic local system} is a
pair $(V, \omega)$, where $V$ is a local system and
$\omega\colon V \times V \to \GF{\ell}$ is a non-degenerate symplectic form.

Such local systems arise in the study of $\ell$-torsion of elliptic curves
$E/S$.
In general, the $\ell$-torsion subgroup $E[\ell]$ of $E$ is a degree $\ell^2$
finite flat group scheme over $S$. If $\ell$ is invertible in $k$, then $E[\ell]$
is étale and becomes a local system for $\GF{\ell}$. The Weil pairing
$\omega\colon E[\ell]\times E[\ell] \to \mu_\ell$ is a twisted symplectic form on
$E[\ell]$, making $E[\ell]$ self-dual. If $k$ contains all $\ell$-th roots of unity,
then $\mu_\ell = \GF{\ell}$ and $(E[\ell], \omega)$ is a symplectic local system of
rank 2. We compute the class of $\BB E[\ell]$ in the case $\ell = 3$ under these
hypotheses on $k$.

\begin{prop}
\label{com-tors}
Let $S$ be a scheme and let $(V, \omega)$ be a rank 2 symplectic local system for
$\GF{3}$ over $S$. Then the class $\{\BB_S V^\vee\} = 1$ in $\KK_0(\cStack_S)$.
\end{prop}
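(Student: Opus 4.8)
The plan is to express $\BB_S \cd V$ in terms of a quasi-split torus and then reduce to the Burnside-ring bookkeeping developed in Section~\ref{tori}. Since $\Sp_2 = \SL_2$, a rank~$2$ symplectic local system for $\GF 3$ has structure group $\Sp_2(\GF 3) = \SL_2(\GF 3)$, and $\cd V = \sHom(V,\GGm) = \sHom(V,\mu_3)$ is a finite group scheme of multiplicative type over $S$, étale-locally $\mu_3^2$. Let $V^{\times}\subset V$ be the complement of the zero section, a finite étale $S$-scheme of degree~$8$. Restriction of homomorphisms along $V^{\times}\hookrightarrow V$ defines a monomorphism $\cd V \hookrightarrow \Res_{V^{\times}/S}\GGm$ (a homomorphism $V\to\GGm$ is determined by its values on $V^{\times}\cup\{0\}=V$), whose target is a Weil restriction of $\GGm$ along a finite étale morphism, hence a quasi-split, in particular special, torus. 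Writing $Q := (\Res_{V^{\times}/S}\GGm)/\cd V$ for the quotient torus, Proposition~\ref{gro-exact} applied to $1\to\cd V\to\Res_{V^{\times}/S}\GGm\to Q\to 1$ gives
$$
\{\BB_S \cd V\} = \{Q\}\big/\{\Res_{V^{\times}/S}\GGm\}
$$
in $\KK_0(\cStack_S)$, so the assertion is equivalent to $\{Q\} = \{\Res_{V^{\times}/S}\GGm\}$.

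To compute $\{Q\}$ I would work $\SL_2(\GF 3)$-equivariantly; by Proposition~\ref{gro-st-loc} one may even take $S = \BB_k\SL_2(\GF 3)$ over a field $k$ in which $6$ is invertible, the constructions above being canonical. The character lattice of $\Res_{V^{\times}/S}\GGm$ is the permutation module $\ZZ[V^{\times}]$, and that of $Q$ is $L := \ker\bigl(\ZZ[V^{\times}]\twoheadrightarrow V\bigr)$, where the surjection sends the basis vector indexed by $v$ to the class of $v$ in $V = (\ZZ/3)^2$; since $V$ is torsion, $L$ and $\ZZ[V^{\times}]$ have isomorphic rationalisations, which is why one expects $\{Q\}=\{\Res_{V^{\times}/S}\GGm\}$. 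The plan is to exhibit $L$ as a stably rational lattice — to fit it into $0\to L\to\ZZ[E_2]\to\ZZ[E_1]\to 0$ with $\ZZ[E_i]$ permutation modules — chosen so that $E_2\cong E_1\sqcup V^{\times}$ as $\SL_2(\GF 3)$-sets. Granting this, Propositions~\ref{mon-class}, \ref{prop-class-qs-torus} and~\ref{prop-qs-as-scf} identify $\{Q\}$ with the image of $(\ell-1)^{E_2}/(\ell-1)^{E_1}$, and comparing marks through the injection $\Bu{\SL_2(\GF 3)}\hookrightarrow\SCF{\SL_2(\GF 3)}$ turns $E_2\cong E_1\sqcup V^{\times}$ into the identity $(\ell-1)^{E_2} = (\ell-1)^{E_1}\cdot(\ell-1)^{V^{\times}}$, whence $\{Q\}=\{\Res_{V^{\times}/S}\GGm\}$.

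The main obstacle is precisely this last step: producing a permutation resolution of $L=\ker(\ZZ[V^{\times}]\to V)$ that matches $\ZZ[V^{\times}]$ up to permutation summands. This is a finite computation for the group $\SL_2(\GF 3)$ of order~$24$, in the spirit of the rank~$\le 2$ stable-rationality input used for Theorem~B, and — as there — one should not expect it to persist for larger symplectic groups. (Sanity check: when the monodromy is trivial, $\cd V=\mu_3^2$ is split, $Q\cong\GGm^8$, and $\{Q\}=(\LL-1)^8=\{\GGm^8\}=\{\Res_{V^{\times}/S}\GGm\}$, recovering $\{\BB\mu_3^2\}=1$ from Proposition~\ref{gro-units}.) Should $L$ fail to be stably rational, one can finish instead by Proposition~\ref{gro-rep} in its relative form: take the faithful $8$-dimensional representation $W:=q_*\sheaf{O}_{V^{\times}}$ of $\cd V$, with $q\colon V^{\times}\to S$ the structure map and $\chi\in\cd V$ scaling the component indexed by $v$ by $\chi(v)$; then $\{\BB_S\cd V\}(\LL^8-1)=\{[(W\setminus 0)/\cd V]\}$, and stratifying $W\setminus 0$ by the subspace of $V$ spanned by the support of the coordinate vector leaves only strata whose stabiliser is a proper subgroup of $\cd V$ — a form of $\mu_3$ or the trivial group. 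Using that the classifying stack of any form of $\mu_3$ over a scheme has class~$1$ (again by embedding into an isogeny of quasi-split tori), each stratum contributes its expected class and the strata sum to $\LL^8-1=\{W\setminus 0\}$, giving the result.
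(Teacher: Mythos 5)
There is a genuine gap: your reduction is exactly the paper's, but the decisive step is left open in both of your routes. Like the paper, you embed $\cd{V}$ into the quasi-split torus $\Res_{V^{\times}/S}\GGm$ (the Cartier dual of the permutation module $\ZZ[V^{\times}]$) and, via Proposition~\ref{gro-exact}, reduce everything to the identity $\{Q\}=\{\Res_{V^{\times}/S}\GGm\}$, where $Q$ is the torus dual to $L=\ker(\ZZ[V^{\times}]\to V)$. At that point you only state a hope — a permutation resolution $0\to L\to\ZZ[E_2]\to\ZZ[E_1]\to 0$ with $E_2\cong E_1\sqcup V^{\times}$ — and you yourself flag it as ``the main obstacle''; no such resolution is produced, and it is not clear one of that precise shape exists. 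The paper closes this gap differently: the symplectic form is used to define an $\SL_2(\GF{3})$-equivariant endomorphism $\varphi$ of $\ZZ[V^{\times}]$, $[v]\mapsto\sum_{\omega(v,u)=1}[u]$, whose image lies in $L$ and whose determinant is $-27\neq 0$; comparing $\varphi$ with the induced endomorphism of $\ZZ[\PP(V)]$ (cokernel of order $3$) shows that $\ker(\ZZ[V^{\times}]\to\ZZ[\PP(V)])$ and $\ker(L\to\ZZ[\PP(V)])$ are isomorphic lattices. Dually, both $\Res_{V^{\times}/S}\GGm$ and $Q$ become torsors under the special quasi-split torus $\ZZ[\PP(V)]^{\vee}$ over isomorphic bases, forcing $\{Q\}=\{\Res_{V^{\times}/S}\GGm\}$ without ever exhibiting $L$ as stably rational in your sense. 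This use of $\omega$ is the one genuinely new idea of the proof, and it is absent from your proposal. (It is also specific to $\ell=3$: the analogous determinant obstruction is why the statement is not proved for larger $\ell$.)

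Your fallback route has the same kind of gap in a different place. After passing to $W=q_{*}\sheaf{O}_{V^{\times}}$ and stratifying $W\setminus 0$ by the span of the support, you assert that ``each stratum contributes its expected class''. For the open stratum the action of $\cd{V}$ (a form of $\mu_3\times\mu_3$, which is not special) is free, and the class of the quotient algebraic space is precisely the sort of quantity that cannot be read off as (class of stratum) divided by the order of the group; equalities of this type with expected classes are the whole subtle point of the subject and can fail for non-special groups, so this step begs the question. The intermediate strata require the class of classifying stacks of forms of $\mu_3$ over varying bases, plus a descent argument for the twisted strata permuted by the monodromy in $\SL_2(\GF{3})$; the $\mu_3$-form claim is true and provable by your indicated method, but it, too, is only asserted. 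In short, the framework and the reduction are sound and agree with the paper, but neither route supplies the argument that actually proves $\{Q\}=\{\Res_{V^{\times}/S}\GGm\}$.
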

\begin{proof}
Since the question is Zariski local on $S$, we may assume that $S$ is connected.
We first assume that $\ell$ is any prime.

Let $\Gamma = \pi_1(S, \overline{\xi})$ denote the étale fundamental group of $S$
with respect to some geometric point $\overline{\xi} \in S$.
Then the pair $(V, \omega)$ corresponds to a pair consisting of a $2$-dimensional
$\Gamma$-representation over $\GF{\ell}$ and a $\Gamma$-invariant symplectic form.
By abuse of notation, we denote this pair by $(V, \omega)$ as well.

Let $V_0 \subset V$ denote the $\Gamma$-invariant subset where the origin in
$V$ has been removed. Then the free abelian group $\ZZ[V_0]$ on $V_0$ has a natural
structure of $\Gamma$-module and we have a natural $\Gamma$-equivariant map
$\ZZ[V_0] \to V$, which takes a formal sum of elements of $V_0$ to an actual sum in $V$.
This gives rise to an exact sequence
$$
0 \to K \to \ZZ[V_0] \to V \to 0
$$
of $\Gamma$-modules.

Denote the set of lines through the origin in $V$ by $\PP(V)$.
Then we have a surjection $V_0 \to \PP(V)$ of $\Gamma$-sets inducing a 
surjection $\ZZ[V_0] \to \ZZ[\PP(V)]$ of $\Gamma$-modules.

If we restrict to the case when $\ell$ is odd, then the map $K \to \ZZ[\PP(V)]$
given by composition is also a surjection. Indeed, each standard basis element
$(\mu\hcolon\lambda)$ of $\ZZ[\PP(V)]$ lifts to $2(\mu,\lambda)-(2\mu,2\lambda)$ in $K$.

The symplectic form $\omega$ allows us to define an endomorphism $\varphi$ on $\ZZ[V_0]$ by
$$
[v] \mapsto \sum_{\omega(v, u) = 1} [u],\qquad v, u \in V_0.
$$
This is $\Gamma$-equivariant since $\omega$ is $\Gamma$-invariant. The image of $\varphi$
lies in $K$. This can be seen by choosing $v'$ such that $\omega(v, v') = 1$ and letting $W$
be the subspace of vectors $u$ such that $\omega(v, u) = 0$. Then $v$ maps to
$\# W\cdot v' + \sum_{u \in W} u$ in $V$, which indeed is zero.

The endomorphism $\varphi$ descends to a corresponding endomorphism $\varphi'$
on $\ZZ[\PP(V)]$ given by
$$
[P] \mapsto \sum_{\omega(P, Q) \neq 0} [Q],\qquad P, Q \in \PP(V).
$$
Since $\PP(V)$ has $\ell + 1$ points, this endomorphism is described by
an $\ell + 1$ by $\ell + 1$ matrix with respect to the standard basis.
All the elements of this matrix are one, except for the elements on the main
diagonal which are zero. Since such a matrix has determinant $-\ell$, it follows
that $\varphi'$ is injective with cokernel of order $\ell$.

Now assume that $\ell = 3$. A straightforward computation gives $\det \varphi =
- 3^3$. In particular, the map $\varphi$ is injective. We summarise the
situation in the following diagram with exact rows and columns:
$$
\begin{tikzpicture}[description/.style={fill=white,inner sep=2pt}] \matrix (m) [matrix of math nodes, row sep=2.5em,
column sep=2em, text height=1.5ex, text depth=0.25ex]
{
& 0 & 0 & 0\\
0 & K'' & K' & A & 0\\
0 & \ZZ[V_0] & K & B & 0 \\
0 & \ZZ[\PP(V)] & \ZZ[\PP(V)] & \ZZ/\ell\ZZ & 0 \\
& 0 & 0 & 0\\
};
\path[->,font=\scriptsize]
(m-1-2)
edge (m-2-2)
(m-1-3)
edge (m-2-3)
(m-1-4)
edge (m-2-4)

(m-2-1)
edge (m-2-2)
(m-2-2)
edge (m-2-3)
edge (m-3-2)
(m-2-3)
edge (m-2-4)
edge (m-3-3)
(m-2-4)
edge (m-2-5)
edge (m-3-4)

(m-3-1)
edge (m-3-2)
(m-3-2)
edge node[auto] {$ \varphi $} (m-3-3)
edge (m-4-2)
(m-3-3)
edge (m-3-4)
edge (m-4-3)
(m-3-4)
edge (m-4-4)
edge (m-3-5)

(m-4-1)
edge (m-4-2)
(m-4-2)
edge node[auto] {$ \varphi' $} (m-4-3)
edge (m-5-2)
(m-4-3)
edge (m-4-4)
edge (m-5-3)
(m-4-4)
edge (m-4-5)
edge (m-5-4);
\end{tikzpicture}
$$
\noindent
Since $K$ has index $\ell^2$ in $\ZZ[V_0]$, it follows that $B$ has order
$\ell$, which forces $A = 0$ by exactness of the last column.

Next we take the Cartier dual of the diagram. The maps $\ZZ[V_0]^\vee \to K''^\vee$ and
$K^\vee \to K'^\vee$ are both $\ZZ[\PP(V)]^\vee$-torsors. Since
the torus $\ZZ[\PP(V)]^\vee$ is quasi-split, and therefore special, we get
the equalities $\{\ZZ[\PP(V)]^\vee\}\{K''^\vee\} = \{\ZZ[V_0]^\vee\}$ and
$\{\ZZ[\PP(V)]^\vee\}\{K'^\vee\} = \{K^\vee\}$. Since we have seen that $K'
\simeq K''$, it follows that $\{\ZZ[V_0]^\vee\} = \{K^\vee\}$.

But $\ZZ[V_0]^\vee$ is also a quasi-split torus. Hence the result follows by applying
Proposition~\ref{gro-exact} to the exact sequence
$$
0 \to V^\vee \to \ZZ[V_0]^\vee \to K^\vee \to 0.
$$
\end{proof}

\begin{remark}
The proof fails for $\ell > 3$ since in this case $A$ in the above diagram does not vanish.
Experiments suggest that the determinant of $\varphi$ is given by 
$\det \varphi = (-1)^{\frac{\ell-1}{2}}\ell^{\binom{\ell}{2}}$.
\end{remark}

\begin{corollary}
\label{com-gerbe-class}
Let $k$ be a field of characteristic not equal to $3$ containing all third
roots of unity. Then $\{\BB_S E[3]\} = \{S\}$ in $\KK_0(\cStack_k)$ for all
elliptic curves $E/S$ with $S \in \cStack_k$. In particular
$\{\pMC{1}{3}\} = \{\MC{1}{1}\}$.
\end{corollary}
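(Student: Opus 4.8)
The plan is to read this off from the previous proposition, together with the standard facts about the $3$-torsion of an elliptic curve, the push-forward maps on Grothendieck rings, and Proposition~\ref{gro-st-loc}.

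First I would establish the relative statement for a \emph{scheme} base $S$ over $k$. Since $\mathrm{char}\,k \neq 3$, the group scheme $E[3]$ is finite étale of degree $9$ over $S$; hence, étale-locally on $S$, it is the constant group scheme attached to a $2$-dimensional $\GF{3}$-vector space, that is, $E[3]$ is a rank $2$ local system for $\GF{3}$. The Weil pairing is a non-degenerate alternating pairing $E[3]\times E[3]\to\mu_3$, and since $k$ contains all third roots of unity we may identify $\mu_3$ with the constant sheaf $\GF{3}$, so that $(E[3],\omega)$ becomes a symplectic local system of rank~$2$. Non-degeneracy of $\omega$ also furnishes a canonical isomorphism $E[3]\cong\cd{E[3]}$, so the previous proposition, applied with $V=E[3]$ (for which $\cd{V}\cong V$), gives $\{\BB_S E[3]\}=1$ in $\KK_0(\cStack_S)$. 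Pushing this forward along the structure morphism $g\colon S\to\Spec k$, and using that $g_\ast$ sends the class of an $S$-stack to its class as a $k$-stack while $g_\ast(1)=g_\ast(\{\ast\})=\{S\}$, I obtain $\{\BB_S E[3]\}=\{S\}$ in $\KK_0(\cStack_k)$ for every scheme $S$ over $k$ and every elliptic curve $E/S$.

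For a general base $\stack{S}\in\cStack_k$ carrying an elliptic curve $E/\stack{S}$, I would then apply Proposition~\ref{gro-st-loc} to the structure morphism $\BB_{\stack{S}}E[3]\to\stack{S}$ with $C=1$: for any scheme $S\to\stack{S}$ the pullback is $\BB_S(E_S)[3]$, whose class equals $\{S\}=1\cdot\{S\}$ by the scheme case just treated, so the proposition yields $\{\BB_{\stack{S}}E[3]\}=\{\stack{S}\}$. Taking $\stack{S}=\MC{1}{1}$ and $E=\UC$ the universal elliptic curve, and invoking the equivalence $\pMC{1}{3}\simeq\BB_{\MC{1}{1}}\UC[3]$ proved in Section~\ref{mod}, gives the final assertion $\{\pMC{1}{3}\}=\{\MC{1}{1}\}$. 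Once the previous proposition is available this is essentially bookkeeping; the only points needing care — and the nearest thing to an obstacle — are verifying that $E[3]$ genuinely satisfies that proposition's hypotheses over an arbitrary base (étaleness from $\mathrm{char}\,k\neq 3$, the identification $\mu_3=\GF{3}$ from the root-of-unity hypothesis, and self-duality from the Weil pairing) and correctly threading the relative statement through $g_\ast$ and Proposition~\ref{gro-st-loc} so as to land in $\KK_0(\cStack_k)$ for stacky $\stack{S}$.
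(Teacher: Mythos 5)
Your proposal is correct and follows essentially the same route as the paper: the scheme case from the preceding proposition together with the remarks on étaleness, the Weil pairing and self-duality of $E[3]$ (plus the push-forward to land in $\KK_0(\cStack_k)$), then Proposition~\ref{gro-st-loc} with $C=1$ for stacky bases, and finally the equivalence $\pMC{1}{3}\simeq\BB_{\MC{1}{1}}\UC[3]$. Your explicit handling of the Cartier-dual issue via the Weil pairing is exactly the point the paper delegates to its "remarks preceding the proposition."
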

\begin{proof}
The case when $S$ is a scheme follows directly from Proposition~\ref{com-tors}
and the remarks preceding it.
The case with $S$ an arbitrary stack follows by applying Proposition~\ref{gro-st-loc} with
$C = 1$.
The identity $\{\pMC{1}{3}\} = \{\MC{1}{1}\}$ follows from the special case
$S = \MC{1}{1}$ together with the identification $\pMC{1}{3} \simeq
\BB_{\MC{1}{1}} \UC[3]$ from Proposition~\ref{mod-pol-class}.
\end{proof}

\begin{prop}
\label{com-m11-class}
Let $k$ be a field in which $6$ is invertible.
Then $\{\MC{1}{1}\} = \LL$ in $\KK_0(\cStack_k)$.
\end{prop}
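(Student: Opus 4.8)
The plan is to realise $\MC{1}{1}$ as a quotient stack coming from short Weierstrass equations, a presentation which is available precisely because $6$ is invertible in $k$. Concretely, I would set $\AA^2 = \Spec k[a,b]$, let $\Delta = -16(4a^3 + 27b^2)$, and take $U = \AA^2 \setminus V(\Delta) = \AA^2 \setminus C$ with $C = V(4a^3 + 27b^2)$. Over $U$ the Weierstrass curve $y^2 = x^3 + ax + b$ is a family of elliptic curves, and $\GGm$ acts on $U$ with positive weights $4$ and $6$, namely $u\cdot(a,b) = (u^4 a, u^6 b)$, induced by the rescaling $(x,y)\mapsto(u^{-2}x,u^{-3}y)$. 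The first step is to invoke the classical fact that the resulting $1$-morphism $[U/\GGm] \to \MC{1}{1}$ is an equivalence: two short Weierstrass equations present isomorphic elliptic curves exactly when related by such a rescaling, and the stabiliser of a generic point of $U$ is $\mu_2 = \{\pm 1\} \subset \GGm$, matching the automorphism group of a generic elliptic curve.

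Granting this, the computation is short. Since $U \to [U/\GGm]$ is a $\GGm$-torsor and $\GGm$ is special, Corollary~\ref{gro-st-mul} yields $\{U\} = \{\GGm\}\cdot\{\MC{1}{1}\} = (\LL - 1)\{\MC{1}{1}\}$. Next I would compute $\{U\} = \LL^2 - \{C\}$ by the scissors relation, and observe that $C$ is a cuspidal plane cubic: the morphism $\AA^1 \to C$, $t \mapsto (-3t^2,\, 2t^3)$, is a bijection sending $0$ to the cusp $(0,0)$ and restricting to an isomorphism $\GGm \xrightarrow{\ \sim\ } C \setminus \{(0,0)\}$, with inverse $(a,b) \mapsto -\tfrac{3}{2}\,b/a$ (well defined since $a \neq 0$ off the cusp — this again uses that $2$ and $3$ are invertible). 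Hence $\{C\} = 1 + (\LL - 1) = \LL$, so $\{U\} = \LL^2 - \LL = \LL(\LL - 1)$; dividing by the invertible class $\LL - 1$ gives $\{\MC{1}{1}\} = \LL$.

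The only step requiring genuine care is the identification $\MC{1}{1} \cong [U/\GGm]$; I would either quote it from the standard theory of Weierstrass models over a base (the hypothesis on $6$ being exactly what makes the short form and the transformation formulas transparent) or establish it directly, by identifying $U$ with the total space of the $\GGm$-torsor of nowhere-vanishing sections of the Hodge bundle $e^{\ast}\Omega^1_{E/S}$ over $\MC{1}{1}$. As a cross-check, and an alternative that sidesteps Weierstrass forms, one can stratify $\MC{1}{1}$ by the $j$-invariant: over $j \neq 0, 1728$ one has a neutral $\mu_2$-gerbe over $\AA^1 \setminus \{0, 1728\}$ (a section being any explicit universal curve of that $j$-invariant), contributing $\{\BB \mu_2\}\cdot(\LL - 2) = \LL - 2$ by Proposition~\ref{gro-units}, while the two exceptional points contribute $\{\BB \mu_6\} + \{\BB \mu_4\} = 1 + 1$, again by Proposition~\ref{gro-units}; the sum is $\LL$ once more. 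In either approach, once the moduli-theoretic identification is in hand the remainder is just bookkeeping in $\KK_0(\cStack_k)$, so I do not expect any further obstacle.
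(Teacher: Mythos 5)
Your main argument is correct and takes a genuinely different route from the paper. You present $\MC{1}{1}$ as the weighted quotient $[U/\GGm]$ coming from short Weierstrass equations (valid precisely because $6$ is invertible), use speciality of $\GGm$ via Corollary~\ref{gro-st-mul} to get $\{U\}=(\LL-1)\{\MC{1}{1}\}$, and compute $\{U\}=\LL^2-\{C\}=\LL^2-\LL$ by noting that the discriminant locus $C=V(4a^3+27b^2)$ is a cuspidal cubic of class $\LL$ (your parametrisation and its inverse check out, and the weight count giving generic stabiliser $\mu_2$ is right). The paper instead stratifies $\MC{1}{1}$ along the $j$-line: over $j\neq 0,1728$ it exhibits an explicit Weierstrass family with $j$-invariant $t$ to show $\stack{M}_U\to U$ is a neutral $\Sigma_2$-gerbe of class $\LL-2$, and the two special points contribute $\{\BB\mu_6\}+\{\BB\mu_4\}=2$ via Proposition~\ref{gro-units}; this is exactly your ``cross-check'' paragraph, so you have in effect reproduced the paper's proof as a secondary argument. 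The trade-off: your primary route concentrates all the moduli-theoretic input into the single identification $\MC{1}{1}\simeq[U/\GGm]$ (a standard fact over $\ZZ[1/6]$, which you should cite or prove via the Hodge-bundle torsor as you indicate), after which everything is a one-line torsor computation with no gerbes and no knowledge of automorphism groups at $j=0,1728$; the paper's route avoids invoking the global Weierstrass presentation but instead needs the automorphism groups $\mu_4$, $\mu_6$ at the special points, the neutrality of the gerbe over the punctured $j$-line (hence the explicit section), and the relative form of $\{\BB\mu_n\}=1$ over the base $U$. Both are complete; either suffices for the downstream use in Theorem~A.
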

\begin{proof}
For ease of notation, we denote $\MC{1}{1}$ by $\stack{M}$. There is a map
$j\colon\stack{M} \to \AA^1 = \Spec k[t]$ to the coarse moduli space induced by
the classical $j$-invariant. Consider the closed points $\{0\}$ and~$\{1728\}$ in
$\AA^1$, and denote their complement by $U$. This induces a stratification of
$\stack{M}$ into the closed substacks $(\stack{M})_0$ and $(\stack{M})_{1728}$
and the open complement $\stack{M}_U$.

The stack $\stack{M}_U$ is equivalent to $\BB_U \Sigma_2$ over $U$.
Indeed, the inertia of $\stack{M}_U \to U$ is the automorphism group
of the universal elliptic curve $\UC_U \to \stack{M}_U$, which is
$\Sigma_2$ since we removed the curves with $j$-invariants 0 or 1728.
In particular, the inertia stack is faithfully flat of finite presentation
over $\stack{M}_U$, so $\stack{M}_U \to U$ is a gerbe. Moreover,
we see that it is the neutral gerbe since $\stack{M}_U \to U$ has a section.
This section is induced by the elliptic curve $E$ defined by the equation
$$
y^2z + xyz = x^3 - \frac{36}{t - 1728}xz^2 - \frac{1}{t - 1728}z^3
$$
over $U$. It is a straightforward computation to check that $E \to U$ is an
elliptic curve whose fibres $E_t$ have $j$-invariant $t$ over closed
points $t \in U$.

It is of course easy to construct elliptic curves with $j$-invariants $0$ and
$1728$ over $k$, so both the stacks $(\stack{M})_0$ and $(\stack{M})_{1728}$ are
neutral gerbes over $k$. Since we assume that 6 is invertible in the base field,
the automorphism groups of elliptic curves with $j$-invariants $0$ and~$1728$
are $\mu_6$ and~$\mu_4$ respectively \cite[3.4]{husemoller2004}. It follows
that $(\stack{M})_0 \simeq \BB \mu_6$ and $(\stack{M})_{1728} \simeq \BB \mu_4$.

Now it follows by Proposition~\ref{gro-units} and the scissors relations that
the class of $\stack{M}$ equals
$$
\{\BB_U \Sigma_2\} + \{\BB \mu_6\} + \{\BB \mu_4\} = \LL-2 + 1+ 1 = \LL
$$
in $\KK_0(\cStack_k)$.
\end{proof}

As a corollary we get the final piece of information we need in order to prove
Theorem~\ref{mainBPGL}.

\begin{corollary}
Let $k$ be a field of characteristic in which $6$ is invertible containing all third
roots of unity. Then $[\Hns/\PGL_3] = \LL$ in $\KK_0(\cStack_k)$.
\end{corollary}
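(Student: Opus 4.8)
The plan is simply to chain together the structural results of the preceding sections. First I would invoke the equivalences of stacks established in Section~\ref{mod}, namely $[\Hns/\PGL_3] \simeq \pMC{1}{3} \simeq \BB_{\MC{1}{1}}\UC[3]$; as noted there, these hold over an arbitrary base, and in particular over $\Spec k$. Passing to classes in $\KK_0(\cStack_k)$ then gives immediately $\{[\Hns/\PGL_3]\} = \{\pMC{1}{3}\}$.

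Next, since $6$ is invertible in $k$ the characteristic is neither $2$ nor $3$, and by hypothesis $k$ contains all third roots of unity; hence Corollary~\ref{com-gerbe-class} applies and yields $\{\pMC{1}{3}\} = \{\MC{1}{1}\}$. Finally, again using that $6$ is invertible, Proposition~\ref{com-m11-class} gives $\{\MC{1}{1}\} = \LL$. Combining the three identities produces $\{[\Hns/\PGL_3]\} = \LL$, which is exactly the assertion.

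There is no real obstacle remaining: the three ingredients --- the moduli-theoretic identification of $[\Hns/\PGL_3]$ with $\pMC{1}{3}$, the torsor computation showing that the gerbe $\BB_{\MC{1}{1}}\UC[3]$ has the same class as its base $\MC{1}{1}$, and the stratification of $\MC{1}{1}$ by the $j$-invariant --- have all been carried out above. The only point requiring a moment's care is the bookkeeping of the hypotheses on $k$: one checks that ``$6$ invertible, together with the presence of all third roots of unity'' is strong enough to satisfy simultaneously the assumptions of each of the cited results.
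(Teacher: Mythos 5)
Your argument is exactly the paper's own proof: it cites the equivalence $[\Hns/\PGL_3]\simeq\pMC{1}{3}$ from Section~\ref{mod}, Corollary~\ref{com-gerbe-class} for $\{\pMC{1}{3}\}=\{\MC{1}{1}\}$, and Proposition~\ref{com-m11-class} for $\{\MC{1}{1}\}=\LL$, with the same check that the hypotheses on $k$ cover all three ingredients. Correct and identical in approach.
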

\begin{proof}
This follows from Proposition~\ref{com-m11-class} combined with
Corollary~\ref{com-gerbe-class} and the equivalence
$[\Hns^3/\PGL_3] \simeq \pMC{1}{3}$ established in
Proposition~\ref{mod-bs-pol}.
\end{proof}
\noindent
The corollary, together with the classes of the strata corresponding to
singular curves computed in Section~\ref{com-sin}, shows that the class of
$\BB \PGL_3$ lies in $\Phi_\LL^{-1}\ZZ[\LL]$, and hence must coincide with
the expected class, which is the inverse of $\{\PGL_3\}$. Of course, since we
actually have computed the classes of all strata of $[H^3/\PGL_3]$ explicitly,
we can also get the result by just taking the sum of the classes.

\appendix
\section{Singular plane cubics and stabilisers}
\label{sin}
Throughout the appendix, we let $k$ be a field in which $6$ is invertible.
It is a classical result that there exist eight singular cubic curves
in $\PP^2_k$ up to projective equivalence (see, for instance,
\cite[I.7]{kraft1984}).
These correspond to orbits in the space of singular cubics in $\PP^2$ under the
natural action of $\PGL_3$ by change of coordinates.
In this appendix, we will determine the stabiliser groups corresponding to these
orbits up to isomorphism.
The result is described in the table below.

\vspace{8 pt}
\begin{center}
\begin{tabular}{c|l|c|c}
& Description & Standard Form & Stabiliser \\
\hline
{a)} & Triple line & $x^3$ & $\GGa^2 \rtimes \GL_2$ \\
{b)} & Double and single line & $x^2y$ &  $\GGa^2 \rtimes \GGm^2$ \\
{c)} & Three lines through a point & $x^2y + xy^2$ &  $\GGa^2 \rtimes G$ \\
{d)} & Three general lines & $xyz$ &  $\PMon_3$ \\
{e)} & Conic and general line 
& $xyz + z^3$ & $\GGm \rtimes \Sigma_2$ \\ 
{f)} & Conic and tangent line & 
$y^2z+x^2y$ & $\GGa \rtimes \GGm$ \\ 
{g)} & Cuspidal cubic & $x^2z + y^3$ & $\GGm$ \\ 
{h)} & Nodal cubic & $xyz + x^3 + y^3$ & $\mu_3 \rtimes \Sigma_2$ \\
\end{tabular}
\end{center}
\vspace{8 pt}
\begin{remark}
We avoid characteristic 2 or 3 since some of the stabiliser groups are
non-reduced in these cases. In characteristic 3, there will also be two
different orbits corresponding to cuspidal cubics. See \cite{af2000} for
a short discussion on this.
\end{remark}
\begin{remark}
The author recently learned that similar computations,
but with the group acting being $\GL_3$, has been done in~\cite{bf2004}.
\end{remark}

\noindent
The table lists the type of singular curve, the equation for a prototypical
curve, and the stabiliser group up to isomorphism. The symbol $\PMon_3$ denotes
the normaliser of the maximal torus in $\PGL_3$. Explicitly, this group may be
described as the group of monomial $3\times3$ matrices up to multiplication by a
scalar. The group denoted by $G$ is the subgroup of $\GL_2$ generated by the
scalar matrices and the embedding of $\Sigma_3$ in $\GL_2$ induced by the
2-dimensional irreducible representation.

In characteristic zero, the stabiliser groups are determined by the points of the underlying
topological space. In positive characteristic however, we must also account for the possibility
of the stabilisers not being reduced. Our assumption on the base field asserts that this situation
does not occur. This may be verified by determining the dimension of the Lie algebra for the
stabiliser. We will go through these arguments in detail for the first computations only
and leave the rest for the reader to verify.

When using coordinates in our arguments, we use the convention that $\PGL_3$ acts
by standard transformation of coordinates on $\PP^2$ from the left. This means that the
action on forms is dual and hence is a right action. We will frequently represent elements
in $\PGL_3$ as $3\times3$-matrices. When doing so, taking the quotient by the scalar matrices
is implicit. The corresponding convention applies when we discuss the Lie algebra of $\PGL_3$.

\subsection{Three Lines}
First we treat the case when the form defining the curve is a product of three linear forms.
There are four different configurations to consider.

\subsubsection*{(a) A triple line}
We choose our prototypical curve such that it is defined by the form $x^3$.
On points, this is the same as the stabiliser of the line $x = 0$. This consists of the
matrices $(a_{ij})$ such that $a_{12} = a_{13} = 0$. By normalising
the coordinates by setting $a_{11} = 1$, we get that this group is isomorphic
to $\GG_a^2 \rtimes \GL_2$.

Now let $A = I + \varepsilon(a_{ij})$ be a general element of the Lie algebra
of $\PGL_3$, i.e.\ a $\overline{k}[\varepsilon]$-point mapping to the identity.
Then $x^3 \cdot A$ is 
$$
x^3 + 3\varepsilon x^2(a_{11}x + a_{12}y + a_{13}z).
$$
This gives the condition $3a_{12} = 3a_{13} = 0$. Since we assume that $3$ is invertible,
we get $a_{12} = a_{13} = 0$. We conclude that both the Lie-algebra and the
group have the same dimension, so the stabiliser is smooth and therefore reduced. Therefore
$\PGL^3_{x^3} \simeq \GG_a^2 \rtimes \GL_2$.

\subsubsection*{(b) A double and a single line}
This time we choose $x^2y$ as our standard representative. An element $(a_{ij})$ 
of the stabiliser must preserve both the line $x = 0$ as well as the
line $y = 0$. This forces $a_{12} = a_{13} = a_{21} = a_{23} = 0$. By normalising
$a_{33} = 1$, we see that the reduced stabiliser is $\GG_a^2 \rtimes \GG_m^2$.

Now we consider a general element $A = I + \varepsilon(a_{ij})$ of the Lie-algebra in the
same way as in the previous case. Then we get that $x^2y\cdot A$ equals
$$
x^2y + \varepsilon(2xy(a_{11}x + a_{12}y + a_{13}z) + x^2(a_{21}x + a_{22}y + a_{23}z)).
$$
Since $2$ is invertible, this gives the conditions $a_{12} = a_{13} = a_{21} = a_{23} = 0$.
Again we see that the dimension is right, so we get $\PGL^3_{x^2y} \simeq \GG_a^2 \rtimes \GG_m^2$.
 
\subsubsection*{(c) Three lines intersecting at a single point}
Let $x^2y + xy^2$ be our standard form. An element $(a_{ij})$ of the stabiliser must preserve
the intersection point $(0\hcolon0\hcolon1)$ of the three lines. This forces $a_{13} = a_{23} = 0$.
By normalising $a_{33} = 1$, we see that the stabiliser is a subgroup of $\GGa^2 \rtimes \GL_2$.

Next we determine the stabiliser of our standard form under the action of the subgroup $\GL_2$.
This corresponds to the problem of finding the stabiliser of an unordered triple of distinct points in
$\PP^1$ under the standard action of $\GL_2$. Since the corresponding action of $\PGL_2$ is simply
3-transitive, the stabiliser is an extension of $\Sigma_3$ by $\GG_m$. One verifies that this is
the subgroup $G$ as described in the introduction of this section. Since the subgroup $\GGa^2$ clearly
stabilises our standard form, we get that the reduced stabiliser is $\GGa^2
\rtimes G$.

The corresponding calculation for the Lie-algebra for $\PGL_{x^2y + xy^2}^3$ as in the previous cases
leads to the relations $a_{11} = a_{22}$ and $a_{12} = a_{13} = a_{21} = a_{23} = 0$. This shows that
the dimension is right regardless of the characteristic, so $\PGL_{x^2y +
xy^2}^3 \simeq \GGa^2 \rtimes G$.

\subsubsection*{(d) Three lines in general position}
Choose $xyz$ as standard form. The stabiliser has to preserve the lines $x$, $y$ and $z$ up to
permutation. If we impose an ordering on the lines, the stabiliser consists of the diagonal
matrices. Since we may reorder the lines by using permutation matrices, the group $\PGL^3_{xyz}$
is generated by the diagonal and the permutation matrices. One verifies that also in this case the
stabiliser is smooth regardless of characteristic. Thus the stabiliser $\PGL^3_{xyz}$ is the
group $\PMon_3$ described in the introduction of the appendix.

\subsection{A smooth conic and a line}
There are two types of cubic curves consisting of a smooth conic
and a line. The line is either tangent to the cubic or intersects it at
two distinct points.

\subsubsection*{(e) Smooth conic and non-tangent line}
The intersection between the curves determine an unordered pair of
points $P1$, $P2$. The tangents to the conic at these points are distinct by
Bézout's Theorem. Hence they intersect in a third point $Q$.

Chose $xyz + z^3$ as the standard form defining our cubic. For this curve,
the points $P1$, $P2$ and $Q$ as defined above have coordinates
$(1\hcolon 0\hcolon 0)$, $(0\hcolon 1\hcolon 0)$ and $(0\hcolon 0\hcolon 1)$
respectively. Any element of the stabiliser $\PGL^3_{xyz + z^3}$ must preserve
these points, so the stabiliser is contained in the group generated by the
diagonal matrices and the permutation matrix switching the first two coordinates.

The subgroup of the diagonal matrices $\diag(a \hcolon b \hcolon c)$ stabilising
the form $xyz + z^3$ is defined by the equation $abc = c^3$. Hence it is
isomorphic to $\GG_m$, as seen by the parametrisation
$t \mapsto \diag(t \hcolon t^{-1} \hcolon 1)$. It follows that the stabiliser is
isomorphic to $\GGm \rtimes \Sigma_2$, where $\Sigma_2$ acts non-trivially on
$\GGm$.

\subsubsection*{(f) Smooth conic and tangent line}
In this case, we let $y^2z + x^2y$ be the standard form defining our curve.
The stabiliser $\PGL^3_{y^2z + x^2y}$ must preserve the intersection
point $(0 \hcolon 0 \hcolon 1)$ between the conic and the tangent line, as well
as the tangent line $y = 0$ itself. Hence it must be a subgroup of the group of
projective matrices of the following form:
$$
\left (
\begin{array}{ccc}
a_{11} & a_{12} & 0 \\
0 & a_{22} & 0 \\
a_{31} & a_{32} & a_{33} \\
\end{array}
\right )
$$
The additional requirement that it also should preserve the conic $yz + x^2$
gives the equations
$$
a_{11}^2 = a_{22}a_{33},\qquad
a_{12}^2 + a_{22}a_{32} = 0,\qquad
a_{22}a_{31} + 2a_{11}a_{12} = 0.
$$
This resulting subgroup is isomorphic to $\GGa \rtimes \GGm$, which may be seen
by using the parametrisation described below.
$$
\left (
\begin{array}{cc}
a & 0 \\
b & 1 \\
\end{array}
\right )
\mapsto
\left (
\begin{array}{ccc}
a & -ab & 0 \\
0 & a^2 & 0 \\
2b & -b^2 & 1 \\
\end{array}
\right )
$$

\subsection{An integral cubic}
There are two types of integral singular cubics, both having exactly one singularity.
The singularity is either a node or a cusp.

\subsubsection*{(g) Cuspidal cubic}
A cuspidal cubic has exactly one singularity and one inflection point.
We denote these points by $P1$ and $P2$ respectively. Consider the reduced line
associated to the tangent cone at $P1$ and the tangent line at $P2$.
As a consequence of Bézout's Theorem, these lines are distinct.
Hence they have a unique intersection point $P3$, which does
not lie on the line between $P1$ and $P2$.

We choose the standard cuspidal cubic $x^2z + y^3$. In this case, the coordinates of the
points $P1$, $P2$ and $P3$ as described above are $(0\hcolon 0\hcolon 1)$, 
$(1\hcolon 0\hcolon 0)$ and $(0\hcolon 1\hcolon 0)$ respectively. 
Since the stabiliser $\PGL^3_{x^2z + y^3}$
preserves these points, it must be a subgroup of the group of diagonal matrices.
Introducing coordinates $\diag(a \hcolon b \hcolon c)$ for these matrices, we see
that the stabiliser is the subgroup defined by the equations $a^2c = b^3$. This group
is isomorphic to $\GG_m$ via the parametrisation
$t \mapsto \diag(t^3 \hcolon t^2 \hcolon 1)$.

\subsubsection*{(h) Nodal cubic}
The standard nodal cubic $xyz + x^3 + y^3$ has the tangent cone $xy = 0$ at
the singularity. All its three inflection points are distinct and lie at the line $z = 0$ at infinity.
Hence the stabiliser group $\PGL^3_{xyz + x^3 + y^3}$ must preserve the forms $xy$ and $z$. It is
therefore a subgroup of the group generated by the diagonal matrices and the
permutation matrix exchanging the $x$- and $y$-coordinates. The diagonal
matrices $\diag(a \hcolon b \hcolon c)$ which preserve the form $xyz + x^3 + y^3$ are
those satisfying the equations $abc = a^3 = b^3$.
These matrices form a group isomorphic to $\mu_3$
through the parametrisation $\zeta \mapsto \diag(\zeta \hcolon \zeta^2 \hcolon 1)$,
where $\zeta^3 = 1$. We conclude that $\PGL^3_{xyz + x^3 + y^3}$ is isomorphic
to $\mu_3 \rtimes \Sigma_2$.

\bibliographystyle{plain}
\bibliography{main}

\ifarxiv
\else
\affiliationone{
Daniel Bergh\\
Department of Mathematics,\\
Stockholm University,\\
SE-106 91 Stockholm, Sweden
\email{dbergh@gmail.com}
}
\fi
\end{document}